\documentclass[11pt]{amsart}
\usepackage{amssymb}
\usepackage{amsmath}
\usepackage{bbm}
\usepackage{graphicx} 
\usepackage[english]{babel} 
\usepackage{enumitem}

\usepackage{float}
\usepackage{mathtools}
\usepackage{hyperref}
\usepackage[noadjust]{cite}
\usepackage[margin=3.2cm]{geometry}
\hypersetup{
    colorlinks=true,       
    linkcolor=blue,          
    citecolor=blue,        
    filecolor=blue,      
    urlcolor=blue           
}

\usepackage[pdf]{pstricks} 
\usepackage{epsfig}
\usepackage{pst-grad} 
\usepackage{pst-plot} 
\usepackage[space]{grffile} 
\usepackage{etoolbox} 
\makeatletter 
\patchcmd\Gread@eps{\@inputcheck#1 }{\@inputcheck"#1"\relax}{}{}
\makeatother

\usepackage[all]{xy}
\usepackage{mathrsfs}
\usepackage{graphics}
\usepackage{amsthm}


\theoremstyle{plain}\newtheorem{theorem}{Theorem}[section]\newtheorem{proposition}[theorem]{Proposition}\newtheorem{lemma}[theorem]{Lemma}\newtheorem{corollary}[theorem]{Corollary}
\def\eq{\coloneqq}

\theoremstyle{definition}\newtheorem{example}[theorem]{Example}\newtheorem{remark}[theorem]{Remark}

\def\C{\mathbb{C}}\def\Z{\mathbb{Z}}\def\R{\mathbb{R}}\def\DD{\mathbb{D}}

\def\ZZ2{\mathbb{\Z/ 2\Z}}\def\Zp{\mathbb{\Z/ p\Z}}
\def\sb{\subset}\def\su{\subset}
\def\lb{\langle}\def\rb{\rangle}\def\ot{\otimes}\def\t{\times}\def\sm{\setminus}

\def\c{\gamma}\def\v{\varphi}

\def\a{\alpha}\def\b{\beta}\def\d{\delta}\def\e{\epsilon}\def\s{\sigma}\def\De{\Delta}\def\D{\Delta}\def\La{\Lambda}\def\la{\lambda}\def\la{\lambda}\def\p{\partial}\def\S{\Sigma}
\def\ma{\mathcal}\def\ov{\overline}\def\wh{\widehat}\def\wt{\widetilde}
\def\D{\mathbb{D}}\def\DD{\mathbb{D}^2}

\def\ad{\text{ad}}\def\gl{\mathfrak{gl}}
\def\sl2{\mathfrak{sl}_2}
\def\su2{\mathfrak{su}(2)}
\def\tr{\text{tr}\,}\def\Aut{\text{Aut}\,}
\def\id{\text{id}}\def\Ker{\text{Ker}\,}
\def\Hom{\text{Hom}\,}
\def\Vect{\text{Vect}\,}
\def\Spin{\text{Spin}}
\def\Ham{\text{Ham}}

\def\Spin{\text{Spin}}

\def\TH3{\Theta_3^{H}}

\def\Usl2{U_q(\sl2)}\def\usl2{\wt{U}_q(\sl2)}\def\Uqgl11{U_q(\mathfrak{gl}(1|1))}\def\Uqsl11{U_q(\mathfrak{sl}(1|1))}

\def\Rep{\text{Rep}}

\def\II1{\text{II}_1}

\def\Vect{\text{Vect}}

\def\mod2{\ (mod \ 2)}


\def\kk{\mathbb{K}}\def\Vect{\text{Vect}_{\kk}}\def\gl{\mathfrak{gl}}\def\gl11{\mathfrak{gl}(1|1)}


\setcounter{tocdepth}{1}
\def\gg{\mathfrak{g}}



\def\Ha{A_{\a}}\def\Ham{A_{\a^{-1}}}\def\Hb{A_{\b}}\def\Sa{S_{\a}}\def\Sam{S_{\a^{-1}}}\def\uH{\underline{A}} \def\uHH{\uH=\{\Ha\}_{\a\in G}}
\def\Hg{A_g}\def\Hab{A_{\a\b}}
\def\Hab{A_{\a\b}}

\def\Deab{\De_{\a,\b}}

\def\Aut{\text{Aut}}

\def\rhoc{\rho\ot h}

\def\rhoc{\rho\ot h}

\def\rhoc{\rho\ot h}

\def\int{\boldsymbol{\mu}}

\def\rhoc{\rho\ot h}

\setcounter{tocdepth}{1}

\def\am{\a^{-1}}
\def\ma{m_{\a}}\def\oa{1_{\a}}\def\ga{g_{\a}}

\def\CC{\mathcal{C}}

\def\va{v_{\a}}

\def\vaa{\v_{\a}}

\def\Vect{\text{Vect}}

\def\Sa{S_{\a}}

\def\uDH{\underline{D(H)}}\def\uDHH{\underline{D(H)}=\{D(H)_{\a}\}_{\a\in\Aut(H)}}

\def\aa{\bold{\a}}\def\bb{\bold{\b}}
\def\CC{\mathcal{C}}

\def\rH{r_{H}}

\def\gg{\bold{g}}

\def\rt{Z}
\def\zz{\boldsymbol{\zeta}}

\def\half{\frac{1}{2}}

\begin{document}


\title[Twisted Kuperberg invariants of knots via twisted Drinfeld doubles]{Twisted Kuperberg invariants of knots and Reidemeister torsion via twisted Drinfeld doubles}

\author{Daniel L\'opez Neumann}
\email{dlopezne@indiana.edu}

\maketitle

\begin{abstract}
In this paper, we consider the Reshetikhin-Turaev invariants of knots in the three-sphere obtained from a twisted Drinfeld double of a Hopf algebra, or equivalently, the relative Drinfeld center of the crossed product $\Rep(H)\rtimes\Aut(H)$. These are quantum invariants of knots endowed with a homomorphism of the knot group to $\Aut(H)$. We show that, at least for knots in the three-sphere, these invariants provide a non-involutory generalization of the Fox-calculus-twisted Kuperberg invariants of sutured manifolds introduced previously by the author, which are only defined for involutory Hopf algebras. In particular, we describe the $SL(n,\C)$-twisted Reidemeister torsion of a knot complement as a Reshetikhin-Turaev invariant.

\end{abstract}

\section[Introduction]{Introduction}



The theory of braided and ribbon monoidal categories is a rich subject at the intersection of various fields, such as representation theory, physics and low dimensional topology. 
Their relevance in topology stems from the fact that a ribbon monoidal category produces an entire family of topological invariants of links and tangles in the three-sphere, known as Reshetikhin-Turaev invariants \cite{RT1}. One of the best known invariants in this family is the Jones polynomial, which is obtained from
the category of modules over the quantum group of $\sl2$. These link invariants can be extended to invariants of closed 3-manifolds as well and even to a powerful object called a topological quantum field theory (TQFT) \cite{Turaev:BOOK1}. Even thirty years after their introduction, many fundamental questions about these invariants remain open, such as their topological and geometrical content, categorification, etc. 

\medskip



\def\DD{\mathcal{D}}

The Reshetikhin-Turaev invariants of links and 3-manifolds admit a refinement when the ribbon categories involved are graded by a group $G$. Here, the algebraic input is a $G$-crossed ribbon category, that is, a category $\CC$ which is a disjoint union of subcategories $\CC=\coprod_{g\in G}\CC_g$ admitting a monoidal structure, $G$-action and ribbon structure compatible with the grading. 
The output is a topological invariant of $G$-tangles, that is, tangles $T\sb \R^2\t[0,1]$ endowed with a flat $G$-connection on a principal $G$-bundle of the complement $X_T$, or equivalently, a homomorphism $\pi_1(X_T)\to G$. This extension was introduced by Turaev in \cite{Turaev:homotopy}. 
Special instances (and modifications) of this construction related to quantum groups have been studied in various works \cite{ADO, CGP:non-semisimple, KR:tangles-flat-connections, BGPR:holonomy, McPhail-Snyder:holonomy}. When the $G$-crossed ribbon categories are semisimple (or more precisely, $G$-modular), Turaev and Virelizier extended this construction to a ``homotopy quantum field theory" (HQFT) \cite{TV:HQFT-II}. It has to be noted that the theory of $G$-extensions of a given monoidal category $\DD$ (i.e. $G$-graded $\CC$ with $\CC_1=\DD$), including extensions of quantum groups at roots of unity, is by now well-understood \cite{ENO:fusion-homotopy, DEN:autoequivalences, DN:braided-Picard, DN:Picard}. 


\medskip



The study of quantum invariants of knots and 3-manifolds endowed with a representation of their fundamental groups seems very natural and (possibly) powerful from a topological point of view. On the one hand, certain classes of 3-manifolds come equipped with a canonical representation $\pi_1(M)\to G$, such as hyperbolic 3-manifolds (with $G=PSL(2,\C)$) or knot complements in the three sphere (with $G=\Z=H_1(M)$). On the other hand, classical invariants such as twisted Reidemeister torsion are by definition invariants of pairs $(M,\rho)$ where $M$ is a 3-manifold and $\rho:\pi_1(M)\to GL(n,\C)$ is an homomorphism \cite{Turaev:BOOK2}. The fact that this invariant is defined for any $\rho$ leads to interesting applications in geometric topology (see the surveys \cite{FV:survey} for topological applications and \cite{Porti:survey} for applications in hyperbolic geometry). 
These considerations lead to the question of whether quantum invariants from $G$-crossed ribbon categories may have applications to geometric topology generalizing those of Reidemeister torsion. 


\medskip




A natural first step to study the above question is to realize twisted Reidemeister torsion as a special case of the Reshetikhin-Turaev invariant of $G$-tangles of \cite{Turaev:homotopy}. Some of the aforementioned works do that in certain cases: \cite{ADO} for multivariable Alexander, \cite{BCGP} for abelian Reidemeister torsion. However, to the knowledge of the author, the works that find (special cases of) non-abelian Reidemeister torsion as a quantum invariant do not rely directly on $G$-crossed ribbon categories. For instance, in \cite{McPhail-Snyder:holonomy} McPhail-Snyder finds the $SL(2,\C)$-twisted Reidemeister torsion of a link complement using the ``holonomy braidings" of Kashaev-Reshetikhin \cite{KR:tangles-flat-connections, KR:braiding-roots-of-unity}. 
Another example is our previous work \cite{LN:twisted}, where, for involutory Hopf algebras $H$, we extended Kuperberg's invariant \cite{Kup1} to an invariant $I_H^{\rho}(M,\c)$ of balanced sutured manifolds $(M,\c)$ endowed with a homomorphism $\rho:\pi_1(M)\to\Aut(H)$, and we showed that the $SL(n,\C)$-twisted Reidemeister torsion was the special case when $H$ is an exterior algebra $\La(\C^n).$


\medskip



\def\DD{\mathcal{D}}\def\ZZ{\mathcal{Z}}\def\wCC{\wt{\CC}}\def\ZZCC{\ZZ_{\CC}(\wCC)}

The purpose of the present work is to find a general class of $G$-crossed ribbon categories for which the associated invariants contain $SL(n,\C)$-twisted Reidemeister torsion, in particular twisted Alexander polynomials \cite{Lin:representations, Wada:twisted, KL:twisted}, as a special case. We achieve this by finding the Reshetikhin-Turaev version of the twisted Kuperberg invariants $I_H^{\rho}$ of our previous work \cite{LN:twisted}, in particular extending that work to non-involutory Hopf algebras (at least for knots in the three-sphere). To do this, we need to take an appropriate Drinfeld double of the graded Hopf object considered in \cite{LN:twisted}. This is motivated by various works that relate Kuperberg or Turaev-Viro invariants to Hennings or Reshetikhin-Turaev invariants of the corresponding Drinfeld double, see \cite{CC:ontwoinvariants, TV:HQFT-III}. Particularly relevant to us is the work of Turaev and Virelizier in the semisimple $G$-crossed setting \cite{TV:HQFT-III}, where it is shown that the Turaev-Viro HQFT and the Reshetikhin-Turaev HQFT are related via the relative Drinfeld center construction of Gelaki-Naidu-Nikshych \cite{GNN:centers}. Since our previous work depended (implicitly) on a crossed product category $\CC=\Rep(H)\rtimes\Aut(H)$ (see Remark \ref{remark: conventions on IH}), we consider here its relative Drinfeld center $\ZZ_{\Rep(H)}(\CC)$. This is a braided $\Aut(H)$-crossed category and is equivalent to the category of modules over the ``twisted Drinfeld double" introduced by Virelizier in \cite{Virelizier:Graded-QG} (see Proposition \ref{prop: TDD is REL DRINFELD}), here denoted $\uDH$, which is a quasi-triangular Hopf group-coalgebra in the sense of \cite{Turaev:homotopy, Virelizier:Hopfgroup}. 

\medskip

\medskip
In light of the above, we consider the Reshetikhin-Turaev invariant of a knot obtained from a twisted Drinfeld double. We will work in the setting of universal quantum invariants. Recall that in the untwisted setting, if $H$ is a finite dimensional Hopf algebra for which $D(H)$ is ribbon and if $K$ is a framed knot presented as the closure of a long knot $T$, then the Reshetikhin-Turaev invariant of $T$ colored by the regular $D(H)$-module is an invariant of $K$. This invariant is left multiplication by an element $\rt_{D(H)}(K)$ of the center of $D(H)$ and recovers all the Reshetikhin-Turaev invariants of $K$ via the quantum trace (or a modified trace if the color is projective), hence called the universal invariant. In the twisted setting, if $G\sb\Aut(H)$ is a subgroup for which the twisted Drinfeld double is $G$-ribbon, a similar procedure leads to a ``twisted" universal quantum invariant $$\rt_{\uDH}^{\rho}(T)\in D(H)$$ where $\rho:\pi_1(X_T)\to G$ is an homomorphism. 
If $H$ is $\Z$-graded, one can use the abelianization $h:\pi_1(X_T)\to\Z$ to extend $\rho$ to a representation $\rhoc$ with coefficients in $\kk[t^{\pm 1}]$. This leads to a polynomial invariant $$\rt_{\uDH}^{\rhoc}(T)\in D(H)[t^{\pm \half}].$$
These are not necessarily invariants of the closure $(K,\rho)$ (except for abelian $\rho$), but they are after an appropriate evaluation. For instance, $\e_{D(H)}(\rt_{\uDH}^{\rhoc}(T))\in\kk[t^{\pm \half}]$ is a polynomial invariant of $(K,\rho)$ and after a framing normalization, one obtains a polynomial invariant $$P_H^{\rho}(K,t)\in\kk[t^{\pm 1}]$$ of unframed $G$-knots. Our main theorem (Theorem \ref{Thm: main theorem}) is that this evaluation recovers the ``Fox-calculus twisted" Kuperberg invariant of our previous work \cite{LN:twisted} (which is defined for involutory $H$) as follows: $$P^{\rho}_H(K,t)\dot{=}I^{\rhoc}_H(M,\c).$$
Here $\dot{=}$ is equality up to multiplication by $\pm\rH(\rho(g))t^{k|\La|}$ for some $g\in\pi_1(X_K), k\in\Z$ ($\rH$ is defined in (\ref{eq: rH}) below and $|\La|$ is the $\Z$-degree of the cointegral of $H$) and $(M,\c)$ is the sutured manifold associated to $(S^3,K)$. This theorem is stated for involutory $H$ so that the right hand side is a well-defined invariant, but one should still think that, for non-involutory $H$, $\rho$ is twisting via Fox calculus the tensors of the universal invariant $Z_{D(H)}(K)$. 

\medskip
As a corollary of our main theorem and our previous work \cite{LN:twisted} we get that the twisted Reidemeister torsion $\tau^{\rhoc}$ is a Reshetikhin-Turaev invariant of a twisted Drinfeld double of an exterior algebra $\La(\C^n)$ (Corollary \ref{cor: twisted Alex is twisted RT}): $$P^{\rho}_{\La(\C^n)}(K,t)\dot{=}\tau^{\rhoc}(S^3\sm K,m)$$
where $m\sb \p(S^3\sm K)$ is a meridian and $\rho:\pi_1(S^3\sm K)\to SL(n,\C)$. Thus, the twisted polynomial invariant $P_{H}^{\rho}(K,t)$ generalizes twisted Alexander polynomials to arbitrary finite-dimensional $\Z$-graded Hopf algebras (with ribbon double). 

\medskip





\medskip

The plan of the paper is the following. In Section \ref{section: Hopf algebras} we recall the notions from Hopf algebra theory that we need and we study the twisted Drinfeld double of a Hopf algebra. In Section \ref{section: quantum invariants of G-tangles} we recall the construction of invariants of $G$-tangles of \cite{Turaev:homotopy}, though in the universal setting, so the tangles are not colored by modules. Here we define the invariants $Z^{\rho}_{\uDH}(T), Z^{\rhoc}_{\uDH}(T)$ and the knot polynomial $P_H^{\rho}(K,t)$ mentioned above. In Section \ref{section: Fox calculus from TDD} we state and prove our main theorem.


\subsection{Acknowledgments} The author would like to thank Dylan Thurston and Roland van der Veen for various helpful conversations.


\section{The twisted Drinfeld double of a Hopf algebra}
\label{section: Hopf algebras}

We begin by recalling some basic notions and notation from Hopf algebra theory and Hopf group-coalgebras, following mainly \cite{Radford:BOOK, Virelizier:Hopfgroup}. The twisted Drinfeld double $\uDH$ is defined in Subsection \ref{subs: Hopf twisted DH}. In Subsection \ref{subs: Hopf ribbon element KR thm} we discuss under which conditions $\uDH$ is ribbon. In Subsection \ref{subs: Hopf Rep theory} we relate $\uDH$ to the relative Drinfeld center of \cite{GNN:centers}. 
\medskip

\def\Vect{\text{Vect}}
\def\SVect{\text{SVect}}

In what follows we denote by $\Vect$ the category of vector spaces over a field $\kk$ and by $\SVect$ that of super-vector spaces and degree-preserving linear maps. 

\subsection{Tensor network notation} We will use tensor network notation for tensors of the form $T:V_1\ot\dots\ot V_n\to W_1\ot\dots\ot W_m$ where the $V_i$'s and $W_j$'s are vector spaces. Such a tensor is denoted by a diagram with $n$ incoming arrows, or inputs, and $m$ outcoming arrows, or outputs. The diagram is read from left to right, and the arrows are ordered from top to bottom. Thus, a tensor $T:V_1\ot V_2\to W$ is denoted by 
\begin{figure}[H]
\centering
\begin{pspicture}(0,-0.30805147)(1.4684552,0.30805147)
\psline[linecolor=black, linewidth=0.026, arrowsize=0.05291667cm 2.0,arrowlength=1.4,arrowinset=0.0]{->}(0.0040216064,-0.3)(0.40402162,-0.1)
\psline[linecolor=black, linewidth=0.026, arrowsize=0.05291667cm 2.0,arrowlength=1.4,arrowinset=0.0]{->}(0.0040216064,0.3)(0.40402162,0.1)
\rput[bl](0.6040216,-0.1){$T$}
\psline[linecolor=black, linewidth=0.026, arrowsize=0.05291667cm 2.0,arrowlength=1.4,arrowinset=0.0]{->}(1.1040215,0.0)(1.5040216,0.0)
\end{pspicture}
\end{figure}
\noindent where the top leftmost arrow corresponds to $V_1$ and the bottom one to $V_2$. In this paper we will only consider tensors where all the arrows correspond to the same vector space $V$, so we do not include $V$ in the notation. The tensor product $T_1\ot T_2$ is obtained by stacking $T_1$ on top of $T_2$ and compositions are drawn by joining the outputs of a tensor with the corresponding inputs in another tensor. 
When an input/output is the dual of a vector space, we will denote it by reversing the orientation of that arrow. For instance, if $V$ is a finite dimensional vector space, a tensor $T:V\ot V^*\to V^*$, the pairing $V^*\ot V\to \kk$, the copairing $\kk\to V\ot V^*$ and the trace of a map $f:V\to V$ are denoted by

\begin{figure}[H]
\begin{pspicture}(0,-0.44)(8.458938,0.44)
\psbezier[linecolor=black, linewidth=0.026, arrowsize=0.05291667cm 2.0,arrowlength=1.4,arrowinset=0.0]{<-}(3.0089374,0.38)(3.6089373,0.38)(3.6089373,-0.42)(3.0089374,-0.42)
\psbezier[linecolor=black, linewidth=0.026, arrowsize=0.05291667cm 2.0,arrowlength=1.4,arrowinset=0.0]{<-}(5.4089375,0.38)(4.8089375,0.38)(4.8089375,-0.42)(5.4089375,-0.42)
\rput[bl](3.7089374,-0.42){,}
\rput[bl](0.6589374,-0.12){$T$}
\psline[linecolor=black, linewidth=0.026, arrowsize=0.05291667cm 2.0,arrowlength=1.4,arrowinset=0.0]{->}(0.0089373775,0.38)(0.40893736,0.18)
\psline[linecolor=black, linewidth=0.026, arrowsize=0.05291667cm 2.0,arrowlength=1.4,arrowinset=0.0]{<-}(0.0089373775,-0.42)(0.40893736,-0.22)
\psline[linecolor=black, linewidth=0.026, arrowsize=0.05291667cm 2.0,arrowlength=1.4,arrowinset=0.0]{<-}(1.2089374,-0.02)(1.6089374,-0.02)
\rput[bl](1.9089373,-0.42){,}
\rput[bl](8.408937,-0.42){.}
\rput[bl](7.458937,0.13){$f$}
\psbezier[linecolor=black, linewidth=0.026](7.8089375,0.28)(8.408937,0.28)(8.408937,-0.32)(7.6089373,-0.32)
\psbezier[linecolor=black, linewidth=0.026, arrowsize=0.05291667cm 2.0,arrowlength=1.4,arrowinset=0.0]{<-}(7.3089375,0.28)(6.708937,0.28)(6.708937,-0.32)(7.5089374,-0.32)
\psline[linecolor=black, linewidth=0.026](7.5089374,-0.32)(7.6089373,-0.32)
\rput[bl](5.708937,-0.42){,}
\end{pspicture}
\end{figure}

The above notation is also valid for super-vector spaces. Here we add the convention that a crossing pair of arrows stands for the symmetry $\tau(x\ot y)=(-1)^{|x||y|}y\ot x$ of super-vector spaces:
\begin{figure}[H]
\centering
\begin{pspicture}(0,-0.42)(2.3569586,0.42)
\rput[bl](0.0,0.0){$\tau$}
\rput[bl](0.5,0.05){=}
\psbezier[linecolor=black, linewidth=0.026, arrowsize=0.05291667cm 2.0,arrowlength=1.4,arrowinset=0.0]{->}(1.25,0.4)(1.85,0.4)(1.85,-0.4)(2.45,-0.4)
\psbezier[linecolor=black, linewidth=0.026, arrowsize=0.05291667cm 2.0,arrowlength=1.4,arrowinset=0.0]{->}(1.25,-0.4)(1.85,-0.4)(1.85,0.4)(2.45,0.4)
\end{pspicture}
\end{figure}

\noindent If $V$ is a super-vector space, the left pairing/copairing $V^*\ot V\to \kk$ and $\kk\to V\ot V^*$ are the usual ones of vector spaces. We will suppose that the category of super-vector spaces has the canonical pivotal structure, that is, the pairing $V\ot V^*\to \kk$ and the copairing $\kk\to V^*\ot V$ are defined by

\begin{figure}[H]
\centering

\begin{pspicture}(0,-0.42)(8.2,0.42)
\psbezier[linecolor=black, linewidth=0.026](1.8,0.4)(2.4,0.4)(2.4,-0.4)(3.0,-0.4)
\psbezier[linecolor=black, linewidth=0.026](1.8,-0.4)(2.4,-0.4)(2.4,0.4)(3.0,0.4)
\psbezier[linecolor=black, linewidth=0.026, arrowsize=0.05291667cm 2.0,arrowlength=1.4,arrowinset=0.0]{<-}(2.97,0.4)(3.57,0.4)(3.6,-0.4)(3.0,-0.4)
\psbezier[linecolor=black, linewidth=0.026, arrowsize=0.05291667cm 2.0,arrowlength=1.4,arrowinset=0.0]{->}(0.0,0.4)(0.6,0.4)(0.6,-0.4)(0.0,-0.4)
\rput[bl](1.1,0.0){=}
\psbezier[linecolor=black, linewidth=0.026](8.2,0.4)(7.6,0.4)(7.6,-0.4)(7.0,-0.4)
\psbezier[linecolor=black, linewidth=0.026](8.2,-0.4)(7.6,-0.4)(7.6,0.4)(7.0,0.4)
\psbezier[linecolor=black, linewidth=0.026, arrowsize=0.05291667cm 2.0,arrowlength=1.4,arrowinset=0.0]{<-}(7.03,0.4)(6.43,0.4)(6.4,-0.4)(7.0,-0.4)
\psbezier[linecolor=black, linewidth=0.026, arrowsize=0.05291667cm 2.0,arrowlength=1.4,arrowinset=0.0]{->}(5.4,0.4)(4.8,0.4)(4.8,-0.4)(5.4,-0.4)
\rput[bl](5.9,0.0){=}
\rput[bl](3.9,-0.4){,}
\end{pspicture}

\end{figure}

\noindent Note that if $V=V_0\oplus V_1$ is a finite-dimensional super-vector space, the above trace of $f:V\to V$ is the supertrace $\tr(f|_{V_0})-\tr(f|_{V_1})$.


\subsection{Hopf algebras} A Hopf algebra over $\kk$ is a vector space $H$ endowed with tensors
\begin{figure}[H]
\centering
\begin{pspicture}(0,-0.31162986)(10.105809,0.31162986)
\psline[linecolor=black, linewidth=0.026, arrowsize=0.05291667cm 2.0,arrowlength=1.4,arrowinset=0.0]{->}(4.3058095,0.0)(4.705809,0.0)
\rput[bl](4.8658094,-0.09999993){$\Delta$}
\psline[linecolor=black, linewidth=0.026, arrowsize=0.05291667cm 2.0,arrowlength=1.4,arrowinset=0.0]{->}(5.3058095,0.100000076)(5.705809,0.30000007)
\psline[linecolor=black, linewidth=0.026, arrowsize=0.05291667cm 2.0,arrowlength=1.4,arrowinset=0.0]{->}(5.3058095,-0.09999993)(5.705809,-0.29999992)
\psline[linecolor=black, linewidth=0.026, arrowsize=0.05291667cm 2.0,arrowlength=1.4,arrowinset=0.0]{->}(0.005809326,-0.29999992)(0.4058093,-0.09999993)
\psline[linecolor=black, linewidth=0.026, arrowsize=0.05291667cm 2.0,arrowlength=1.4,arrowinset=0.0]{->}(0.005809326,0.30000007)(0.4058093,0.100000076)
\rput[bl](0.5658093,-0.049999923){$m$}
\psline[linecolor=black, linewidth=0.026, arrowsize=0.05291667cm 2.0,arrowlength=1.4,arrowinset=0.0]{->}(1.0058093,0.0)(1.4058093,0.0)
\rput[bl](2.4458094,-0.09999993){$1$}
\psline[linecolor=black, linewidth=0.026, arrowsize=0.05291667cm 2.0,arrowlength=1.4,arrowinset=0.0]{->}(2.8058093,0.0)(3.2058094,0.0)
\psline[linecolor=black, linewidth=0.026, arrowsize=0.05291667cm 2.0,arrowlength=1.4,arrowinset=0.0]{->}(6.8058095,0.0)(7.205809,0.0)
\rput[bl](7.4058094,-0.059999924){$\epsilon$}
\rput[bl](9.145809,-0.09999993){$S$}
\psline[linecolor=black, linewidth=0.026, arrowsize=0.05291667cm 2.0,arrowlength=1.4,arrowinset=0.0]{->}(9.50581,0.0)(9.905809,0.0)
\psline[linecolor=black, linewidth=0.026, arrowsize=0.05291667cm 2.0,arrowlength=1.4,arrowinset=0.0]{->}(8.605809,0.0)(9.00581,0.0)
\rput[bl](1.5258093,-0.09999993){,}
\rput[bl](3.3258092,-0.09999993){,}
\rput[bl](5.725809,-0.09999993){,}
\rput[bl](7.625809,-0.09999993){,}
\rput[bl](10.055809,-0.09999993){.}
\end{pspicture}
\end{figure}

\noindent such that $(H,m,1)$ is an algebra, $(H,\De,\e)$ is a coalgebra, $\De$ and $\e$ are algebra homomorphism and $S$ satisfies the antipode axiom. Sometimes, we also use Sweedler's notation for the coproduct, that is, we write $\De(h)= h_{(1)}\ot h_{(2)}$ for $h\in H$ where it is understood that this is a sum of various elements $h_{(1)}\ot h_{(2)}\in H^{\ot 2}$. We will also consider Hopf algebras in $\SVect$, the only difference is that the multiplicative axiom for the coproduct involves a sign coming from the symmetry of the category. We denote by $\Aut(H)$ the group of Hopf algebra automorphisms of $H$. In this paper, we assume Hopf algebras are finite dimensional. 
\medskip

The dual $H^*$ is a Hopf algebra if the product and the coproduct (which we simply denote by $\De$ and $m$ respectively in tensor network notation) are defined by
\begin{figure}[H]
\begin{pspicture}(0,-0.570345)(9.9,0.570345)
\psline[linecolor=black, linewidth=0.026, arrowsize=0.05291667cm 2.0,arrowlength=1.4,arrowinset=0.0]{->}(8.1,-0.449655)(8.5,-0.249655)
\psline[linecolor=black, linewidth=0.026, arrowsize=0.05291667cm 2.0,arrowlength=1.4,arrowinset=0.0]{->}(8.1,0.150345)(8.5,-0.049654998)
\rput[bl](8.66,-0.199655){$m$}
\psbezier[linecolor=black, linewidth=0.026](1.2,0.150345)(1.6,0.150345)(1.6,-0.449655)(1.2,-0.4496549987792969)
\psline[linecolor=black, linewidth=0.026, arrowsize=0.05291667cm 2.0,arrowlength=1.4,arrowinset=0.0]{->}(2.5,-0.349655)(2.9,-0.349655)
\rput[bl](3.06,-0.449655){$\Delta$}
\psline[linecolor=black, linewidth=0.026, arrowsize=0.05291667cm 2.0,arrowlength=1.4,arrowinset=0.0]{<-}(0.0,-0.149655)(0.4,0.050345)
\psline[linecolor=black, linewidth=0.026, arrowsize=0.05291667cm 2.0,arrowlength=1.4,arrowinset=0.0]{<-}(0.0,0.450345)(0.4,0.250345)
\psline[linecolor=black, linewidth=0.026, arrowsize=0.05291667cm 2.0,arrowlength=1.4,arrowinset=0.0]{<-}(1.0,0.150345)(1.2,0.150345)
\psline[linecolor=black, linewidth=0.026](1.2,-0.449655)(0.0,-0.449655)
\psline[linecolor=black, linewidth=0.026, arrowsize=0.05291667cm 2.0,arrowlength=1.4,arrowinset=0.0]{->}(3.2,0.150345)(2.5,0.150345)
\psline[linecolor=black, linewidth=0.026, arrowsize=0.05291667cm 2.0,arrowlength=1.4,arrowinset=0.0]{->}(3.2,0.550345)(2.5,0.550345)
\rput[bl](0.56,0.050345){$\Delta$}
\psbezier[linecolor=black, linewidth=0.026](3.5,-0.249655)(3.9,-0.149655)(4.04,-0.069655)(4.02,0.21034500122070313)(4.0,0.490345)(3.6,0.550345)(3.2,0.550345)
\psbezier[linecolor=black, linewidth=0.026](3.5,-0.449655)(3.8,-0.649655)(4.17,-0.439655)(4.13,-0.13965499877929688)(4.09,0.160345)(3.7,0.150345)(3.2,0.150345)
\psbezier[linecolor=black, linewidth=0.026](9.1,0.450345)(9.5,0.450345)(9.5,-0.149655)(9.1,-0.1496549987792969)
\psline[linecolor=black, linewidth=0.026, arrowsize=0.05291667cm 2.0,arrowlength=1.4,arrowinset=0.0]{->}(9.1,0.450345)(8.1,0.450345)
\psline[linecolor=black, linewidth=0.026, arrowsize=0.05291667cm 2.0,arrowlength=1.4,arrowinset=0.0]{<-}(5.5,0.250345)(5.9,0.250345)
\rput[bl](6.06,0.150345){$m$}
\psline[linecolor=black, linewidth=0.026](5.6,-0.149655)(5.9,-0.149655)
\psline[linecolor=black, linewidth=0.026](5.6,-0.449655)(5.9,-0.449655)
\psbezier[linecolor=black, linewidth=0.026, arrowsize=0.05291667cm 2.0,arrowlength=1.4,arrowinset=0.0]{->}(5.9,-0.149655)(6.2,-0.149655)(7.03,-0.209655)(7.05,0.15034500122070313)(7.07,0.510345)(6.8,0.550345)(6.5,0.450345)
\psbezier[linecolor=black, linewidth=0.026, arrowsize=0.05291667cm 2.0,arrowlength=1.4,arrowinset=0.0]{->}(5.9,-0.449655)(6.7,-0.449655)(7.02,-0.419655)(7.02,-0.18965499877929687)(7.02,0.040345002)(6.8,0.050345)(6.5,0.150345)
\rput[bl](1.9,-0.149655){=}
\rput[bl](7.4,-0.149655){=}
\rput[bl](4.45,-0.499655){,}
\rput[bl](9.85,-0.39965498){.}
\end{pspicture}
\end{figure}


\medskip

\subsection{Integrals and cointegrals}\label{subs: Hopf integrals} Let $H$ be a finite dimensional Hopf algebra. Then there exists a unique element (up to scalar) $\la_r\in H^*$  satisfying
\begin{figure}[H]
\centering
\begin{pspicture}(0,-0.43439284)(4.926559,0.43439284)
\psline[linecolor=black, linewidth=0.026, arrowsize=0.05291667cm 2.0,arrowlength=1.4,arrowinset=0.0]{->}(0.0,-0.15560715)(0.4,-0.15560715)
\rput[bl](0.56,-0.25560716){$\Delta$}
\psline[linecolor=black, linewidth=0.026, arrowsize=0.05291667cm 2.0,arrowlength=1.4,arrowinset=0.0]{->}(1.0,-0.055607148)(1.4,0.14439285)
\psline[linecolor=black, linewidth=0.026, arrowsize=0.05291667cm 2.0,arrowlength=1.4,arrowinset=0.0]{->}(1.0,-0.25560716)(1.4,-0.45560715)
\rput[bl](4.24,-0.25560716){$1$}
\psline[linecolor=black, linewidth=0.026, arrowsize=0.05291667cm 2.0,arrowlength=1.4,arrowinset=0.0]{->}(4.6,-0.15560715)(5.0,-0.15560715)
\rput[bl](1.6,0.14439285){$\lambda_r$}
\rput[bl](3.5,-0.25560716){$\lambda_r$}
\psline[linecolor=black, linewidth=0.026, arrowsize=0.05291667cm 2.0,arrowlength=1.4,arrowinset=0.0]{->}(2.9,-0.15560715)(3.3,-0.15560715)
\rput[bl](2.25,-0.15560715){=}
\end{pspicture}
\end{figure}
\noindent This is called a {\em right integral} of $H$. Similarly, there exists a unique (up to scalar) {\em left cointegral} $\La_l\in H$ satisfying
\begin{figure}[H]
\centering
\begin{pspicture}(0,-0.4558149)(5.2,0.4558149)
\psline[linecolor=black, linewidth=0.026, arrowsize=0.05291667cm 2.0,arrowlength=1.4,arrowinset=0.0]{->}(0.45,-0.1558149)(0.85,0.044185106)
\psline[linecolor=black, linewidth=0.026, arrowsize=0.05291667cm 2.0,arrowlength=1.4,arrowinset=0.0]{->}(0.45,0.4441851)(0.85,0.2441851)
\psline[linecolor=black, linewidth=0.026, arrowsize=0.05291667cm 2.0,arrowlength=1.4,arrowinset=0.0]{->}(1.55,0.14418511)(1.95,0.14418511)
\rput[bl](2.3,0.14418511){=}
\psline[linecolor=black, linewidth=0.026, arrowsize=0.05291667cm 2.0,arrowlength=1.4,arrowinset=0.0]{->}(2.95,0.14418511)(3.35,0.14418511)
\psline[linecolor=black, linewidth=0.026, arrowsize=0.05291667cm 2.0,arrowlength=1.4,arrowinset=0.0]{->}(4.55,0.14418511)(4.95,0.14418511)
\rput[bl](3.5,0.044185106){$\epsilon$}
\rput[bl](5.15,0.14418511){.}
\rput[bl](0.0,-0.4558149){$\Lambda_l$}
\rput[bl](1.05,0.044185106){$m$}
\rput[bl](4.0,-0.055814896){$\Lambda_l$}
\end{pspicture}
\end{figure}

From the uniqueness of the integrals and cointegrals, it follows that there exist group-likes $\gg\in G(H)$ and $\zz\in G(H^*)$ characterized by the following equations:
\begin{figure}[H]
\centering
\begin{pspicture}(0,-0.645)(11.45,0.645)
\rput[bl](8.55,-0.045){=}
\psline[linecolor=black, linewidth=0.026, arrowsize=0.05291667cm 2.0,arrowlength=1.4,arrowinset=0.0]{->}(9.2,-0.045)(9.6,-0.045)
\rput[bl](9.75,-0.245){$\zz$}
\psline[linecolor=black, linewidth=0.026, arrowsize=0.05291667cm 2.0,arrowlength=1.4,arrowinset=0.0]{->}(6.7,-0.345)(7.1,-0.145)
\psline[linecolor=black, linewidth=0.026, arrowsize=0.05291667cm 2.0,arrowlength=1.4,arrowinset=0.0]{->}(6.7,0.255)(7.1,0.055)
\psline[linecolor=black, linewidth=0.026, arrowsize=0.05291667cm 2.0,arrowlength=1.4,arrowinset=0.0]{->}(7.8,-0.045)(8.2,-0.045)
\rput[bl](6.15,0.355){$\Lambda_l$}
\rput[bl](7.3,-0.145){$m$}
\psline[linecolor=black, linewidth=0.026, arrowsize=0.05291667cm 2.0,arrowlength=1.4,arrowinset=0.0]{->}(10.8,-0.045)(11.2,-0.045)
\rput[bl](11.4,-0.045){.}
\rput[bl](10.25,-0.245){$\Lambda_l$}
\psline[linecolor=black, linewidth=0.026, arrowsize=0.05291667cm 2.0,arrowlength=1.4,arrowinset=0.0]{->}(0.0,-0.045)(0.4,-0.045)
\rput[bl](0.56,-0.145){$\Delta$}
\psline[linecolor=black, linewidth=0.026, arrowsize=0.05291667cm 2.0,arrowlength=1.4,arrowinset=0.0]{->}(1.0,0.055)(1.4,0.255)
\psline[linecolor=black, linewidth=0.026, arrowsize=0.05291667cm 2.0,arrowlength=1.4,arrowinset=0.0]{->}(1.0,-0.145)(1.4,-0.345)
\rput[bl](4.24,-0.145){$\gg$}
\psline[linecolor=black, linewidth=0.026, arrowsize=0.05291667cm 2.0,arrowlength=1.4,arrowinset=0.0]{->}(4.6,-0.045)(5.0,-0.045)
\rput[bl](1.6,-0.645){$\lambda_r$}
\rput[bl](3.5,-0.145){$\lambda_r$}
\psline[linecolor=black, linewidth=0.026, arrowsize=0.05291667cm 2.0,arrowlength=1.4,arrowinset=0.0]{->}(2.9,-0.045)(3.3,-0.045)
\rput[bl](2.25,-0.045){=}
\rput[bl](5.2,-0.145){,}
\end{pspicture}
\end{figure}

\def\Vect{\text{Vect}}
\def\SVect{\text{SVect}}

If $H$ is a Hopf algebra in $\Vect$, the trace of a linear map $f:H\to H$ can be computed from a cointegral/integral pair $\La_l,\la_r$ as above satisfying $\la_r(\La_l)=1$ by the following formula of Radford \cite[Thm. 10.4.1]{Radford:BOOK}:

\begin{figure}[H]
\centering
\begin{pspicture}(0,-0.455)(8.596064,0.455)
\psline[linecolor=black, linewidth=0.026, arrowsize=0.05291667cm 2.0,arrowlength=1.4,arrowinset=0.0]{->}(7.4760637,-0.005)(7.8760633,-0.005)
\rput[bl](3.1260636,-0.105){$\Lambda_l$}
\rput[bl](6.9760637,-0.105){$m$}
\psline[linecolor=black, linewidth=0.026, arrowsize=0.05291667cm 2.0,arrowlength=1.4,arrowinset=0.0]{->}(3.7760634,-0.005)(4.1760635,-0.005)
\rput[bl](4.3360634,-0.105){$\Delta$}
\psline[linecolor=black, linewidth=0.026, arrowsize=0.05291667cm 2.0,arrowlength=1.4,arrowinset=0.0]{->}(4.7760634,0.095)(5.1760635,0.295)
\psline[linecolor=black, linewidth=0.026, arrowsize=0.05291667cm 2.0,arrowlength=1.4,arrowinset=0.0]{->}(4.7760634,-0.105)(5.1760635,-0.305)
\rput[bl](8.276064,-0.105){$\lambda_r$}
\rput[bl](5.4260635,0.145){$f$}
\rput[bl](5.4260635,-0.455){$S$}
\psbezier[linecolor=black, linewidth=0.026, arrowsize=0.05291667cm 2.0,arrowlength=1.4,arrowinset=0.0]{->}(5.8760633,-0.305)(6.2760634,-0.305)(6.1760635,0.595)(6.7760634,0.195)
\psbezier[linecolor=black, linewidth=0.026, arrowsize=0.05291667cm 2.0,arrowlength=1.4,arrowinset=0.0]{->}(5.8760633,0.295)(6.2760634,0.295)(6.1760635,-0.605)(6.7760634,-0.205)
\rput[bl](2.2260635,-0.055){=}
\rput[bl](0.6260635,0.145){$f$}
\psbezier[linecolor=black, linewidth=0.026, arrowsize=0.05291667cm 2.0,arrowlength=1.4,arrowinset=0.0](0.97606355,0.295)(1.5760635,0.295)(1.5760635,-0.305)(0.77606356,-0.305)
\psbezier[linecolor=black, linewidth=0.026, arrowsize=0.05291667cm 2.0,arrowlength=1.4,arrowinset=0.0]{<-}(0.47606355,0.295)(-0.12393646,0.295)(-0.12393646,-0.305)(0.67606354,-0.305)
\psline[linecolor=black, linewidth=0.026, arrowsize=0.05291667cm 2.0,arrowlength=1.4,arrowinset=0.0](0.67606354,-0.305)(0.77606356,-0.305)
\end{pspicture}
\end{figure}

\noindent All of the above holds as well if $H$ is a Hopf algebra in $\SVect$ \cite{Kup2}, except that the right hand side in Radford's trace formula has to be multiplied by $(-1)^{|\La_l|}$ (this follows e.g. from \cite[Lemma 3.4]{Kup2}).
\medskip

Now let $\a\in\Aut(H)$. Then, if $\la_r$ is a right integral of $H$, $\la_r\circ \a$ is also a right integral so by uniqueness, 
\begin{align}
\label{eq: rH}
\la_r\circ \a=\rH(\a)\cdot\la_r
\end{align}
for some scalar $\rH(\a)\in\kk^{\t}$. This defines a group homomorphism $\rH:\Aut(H)\to\kk^{\t}$. Note that if $\la_l$ is a left integral, then also $\la_l\circ\a=\rH(\a)\la_l$ (since any left integral is a multiple of $\la_r\circ S$). Similarly, if $\La$ is any left or right cointegral we have $\a(\La)=\rH(\a)\La$, since $\la(\La)\neq 0$ for any nonzero integral (left or right) and any nonzero cointegral (left or right) \cite[Thm. 10.2.2]{Radford:BOOK}.



\subsection{Hopf $G$-coalgebras}\label{subs: Hopf G-coalgebras} We now recall some definitions from \cite{Virelizier:Hopfgroup}. Let $G$ be a group. A {\em Hopf $G$-coalgebra} over $\kk$ is a family $\uH=\{\Ha\}_{\a\in G}$ where each $\Ha=(\Ha,\ma,\oa)$ is a $\kk$-algebra with multiplication $\ma$ and unit $\oa$, endowed with algebra morphisms $\Deab:\Hab\to\Ha\ot\Hb$ for each $\a,\b\in G$, a counit $\e:A_1\to\kk$ and an antipode $\Sa:\Ha\to\Ham$ for each $\a\in G$ satisfying graded versions of the usual Hopf algebra axioms (see \cite{Virelizier:Hopfgroup} for more details). 
For instance, the antipode axiom for Hopf $G$-coalgebras is

\begin{figure}[H]
\centering
\begin{pspicture}(0,-0.51427)(12.9,0.51427)
\rput[bl](0.57,-0.2){$\Delta_{\alpha^{-1},\alpha}$}
\psline[linecolor=black, linewidth=0.026, arrowsize=0.05291667cm 2.0,arrowlength=1.4,arrowinset=0.0]{->}(1.9,0.1)(2.2,0.3)
\rput[bl](2.4,0.2){$S_{\alpha^{-1}}$}
\psbezier[linecolor=black, linewidth=0.026, arrowsize=0.05291667cm 2.0,arrowlength=1.4,arrowinset=0.0]{->}(1.9,-0.1)(2.3,-0.5)(3.1,-0.5)(3.5,-0.1)
\psline[linecolor=black, linewidth=0.026, arrowsize=0.05291667cm 2.0,arrowlength=1.4,arrowinset=0.0]{->}(3.2,0.3)(3.5,0.1)
\rput[bl](3.64,-0.07){$m_{\alpha}$}
\psline[linecolor=black, linewidth=0.026, arrowsize=0.05291667cm 2.0,arrowlength=1.4,arrowinset=0.0]{->}(0.0,0.0)(0.4,0.0)
\psline[linecolor=black, linewidth=0.026, arrowsize=0.05291667cm 2.0,arrowlength=1.4,arrowinset=0.0]{->}(4.3,0.0)(4.7,0.0)
\rput[bl](8.67,-0.2){$\Delta_{\alpha,\alpha^{-1}}$}
\psline[linecolor=black, linewidth=0.026, arrowsize=0.05291667cm 2.0,arrowlength=1.4,arrowinset=0.0]{->}(11.3,-0.3)(11.6,-0.1)
\rput[bl](10.5,-0.5){$S_{\alpha^{-1}}$}
\psbezier[linecolor=black, linewidth=0.026, arrowsize=0.05291667cm 2.0,arrowlength=1.4,arrowinset=0.0]{->}(10.0,0.1)(10.4,0.5)(11.2,0.5)(11.6,0.1)
\psline[linecolor=black, linewidth=0.026, arrowsize=0.05291667cm 2.0,arrowlength=1.4,arrowinset=0.0]{->}(10.0,-0.1)(10.3,-0.3)
\rput[bl](11.74,-0.07){$m_{\alpha}$}
\psline[linecolor=black, linewidth=0.026, arrowsize=0.05291667cm 2.0,arrowlength=1.4,arrowinset=0.0]{->}(8.1,0.0)(8.5,0.0)
\psline[linecolor=black, linewidth=0.026, arrowsize=0.05291667cm 2.0,arrowlength=1.4,arrowinset=0.0]{->}(12.3,0.0)(12.7,0.0)
\rput[bl](4.98,-0.06){=}
\rput[bl](7.68,-0.06){=}
\rput[bl](6.54,-0.1){$1_{\alpha}$}
\psline[linecolor=black, linewidth=0.026, arrowsize=0.05291667cm 2.0,arrowlength=1.4,arrowinset=0.0]{->}(7.0,0.0)(7.4,0.0)
\psline[linecolor=black, linewidth=0.026, arrowsize=0.05291667cm 2.0,arrowlength=1.4,arrowinset=0.0]{->}(5.5,0.0)(5.9,0.0)
\rput[bl](6.1,-0.06){$\epsilon$}
\rput[bl](12.85,-0.1){.}
\end{pspicture}
\end{figure}

\noindent This definition is also valid in the category of super-vector spaces. When $G=1$ we recover the usual notion of Hopf algebra and we denote the structure tensors simply by $m,1,\De,\e,S$. If $\uHH$ is a Hopf $G$-coalgebra, then $A_1$ is a Hopf algebra in the usual sense. 
The notion of integral and distinguished group-like extend to this setting \cite{Virelizier:Hopfgroup}. 

\medskip


\medskip

\def\vaa{\v_{\a}}\def\whva{\wh{\v}_{\a}}\def\whv{\wh{\v}}

\def\Sets{\bold{Sets}}\def\Rab{R_{\a,\b}}

A Hopf $G$-coalgebra $\uHH$ is said to be {\em crossed} if it is endowed with a family of algebra isomorphisms $\v=\{\v_{\b,\a}:\Ha\to A_{\b\a\b^{-1}}\}_{\a,\b\in G}$, called a {\em crossing}, satisfying:
\begin{enumerate}
\item $(\v_{\b,\a}\ot\v_{\b,\c})\De_{\a,\c}=\De_{\b\a\b^{-1},\b\c\b^{-1}}\v_{\b,\a\c}$,
\item $\e\v_{\b,1}=\e$,
\item $\v_{\a,\b\c\b^{-1}}\v_{\b,\c}=\v_{\a\b,\c}$,

\end{enumerate}
for each $\a,\b,\c\in G$. We will omit the second subscript of $\v_{\b,\a}$ and denote $\v_{\b}=\v_{\b,\a}$. If $(\uH,\v)$ is crossed, we will say that a family of elements $x=\{x_{\a}\}_{\a\in G}$, where each $x_{\a}\in A_{\a}$, is {\em $G$-invariant} if $\v_{\b}(x_{\a})=x_{\b\a\b^{-1}}$ for each $\a,\b\in G$.

\medskip

A crossed Hopf $G$-coalgebra $(\uH,\v)$ is {\em quasi-triangular} if it is endowed with a family of elements $R=\{\Rab\}_{\a,\b\in G}$ satisfying the following axioms for each $\a,\b,\c\in G$:
\def\Hc{A_{\c}}
\begin{enumerate}

\item $R_{\a,\b}$ is invertible in $A_{\a}\ot A_{\b}$,
\item $(\v_{\c}\ot \v_{\c})(R_{\a,\b})=R_{\c\a\c^{-1},\c\b\c^{-1}}$,

\item $R_{\a,\b}\cdot\Deab(x)=(\tau_{\b,\a}(\v_{\a^{-1}}\ot \id_{\Ha})\De_{\a\b\a^{-1},\a}(x))\cdot R_{\a,\b}$ for each $x\in\Hab$, where $\tau_{\a,\b}$ denotes permutation of two factors (with signs in the super-case),

\item $(\id_{\Ha}\ot\De_{\b,\c})(R_{\a,\b\c})=(R_{\a,\c})_{1\b 3}\cdot (R_{\a,\b})_{12\c}$,

\item $(\Deab\ot\id_{\Hc})(R_{\a\b,\c})=((\id_{\Ha}\ot\v_{\b^{-1}})(R_{\a,\b\c\b^{-1}}))_{1\b 3}\cdot (R_{\b,\c})_{\a 23}$
\end{enumerate}
The last two equalities hold in $\Ha\ot\Hb\ot\Hc$, where we note $X_{1 2\c}=x\ot y\ot 1_{\c}$ for any $X=\sum x\ot y\in \Ha\ot\Hb$ and similarly for $Y_{\a 23}, Z_{1\b 3}$ ($Y\in\Hb\ot\Hc, Z\in \Ha\ot\Hc$). In the super-case we will suppose that $R$-matrices have degree zero.
\medskip

\def\ga{g_{\a}}\def\ua{u_{\a}}
\noindent The {\em Drinfeld element} of a quasi-triangular Hopf $G$-coalgebra $(\uH,\v,R)$ is $u=\{u_{\a}\}_{\a\in G}$ defined by
\begin{align}
\label{eq: graded Drinfeld element}
\ua=\ma(\Sam\vaa\ot\id_{\Ha})\tau_{\a,\am}(R_{\a,\am})\in A_{\a}.
\end{align}
This is invertible with inverse $u_{\a}^{-1}=m_{\a}(\id_{A_{\a}}\ot S_{\a^{-1}}S_{\a})\tau_{\a,\a}(R_{\a,\a})$ \cite[Lemma 6.5]{Virelizier:Hopfgroup}.
\medskip

A quasi-triangular Hopf $G$-coalgebra $(\uH,\v,R)$ is {\em ribbon} if it is endowed with a family $v=\{v_{\a}\}_{\a\in G}$, where each $\va\in\Ha^{\t}$, such that 
\begin{enumerate}
\item $\v_{\b}(v_{\a})=v_{\b\a\b^{-1}}$,
\item $\Deab(v_{\a\b})=(v_{\a}\ot v_{\b})(\tau_{\b,\a}(\v_{\a^{-1}}\ot \id_{\Ha})R_{\a\b\a^{-1},\a})\cdot R_{\a,\b}$,
\item $\vaa(x)=v_{\a}^{-1}xv_{\a}$ for all $x\in\Ha$,
\item $\Sa(v_{\a})=v_{\am}$.
\end{enumerate}

\medskip

The following lemma is a generalization of \cite[Theorem 12.3.6]{Radford:BOOK}. We say that $g=\{g_{\a}\}_{\a\in G}$ is {\em group-like} if $\De_{\a,\b}(g_{\a\b})=g_{\a}\ot g_{\b}$ for each $\a,\b\in G$ and each $g_{\a}$ is invertible in $A_{\a}$.

\def\ua{u_{\a}}\def\uam{u_{\a^-1{}}}\def\Rab{R_{\a,\b}}\def\RRab{R=\{R_{\a,\b}\}_{\a,\b\in G}}\def\vva{v=\{\va\}_{\a\in G}}

\begin{lemma}
\label{lemma: ribbon bijection group-likes}
Let $(\uH,\v,R)$ be a quasi-triangular Hopf $G$-coalgebra. The map $\{v_{\a}\}\mapsto \{g_{\a}:=u_{\a}v_{\a}\}$ is a bijection between ribbon structures and elements $g=\{g_{\a}\}$ of $\uH$ satisfying
\begin{enumerate}[label={(\arabic*$'$)}]
\item $\v_{\b}(g_{\a})=g_{\b\a\b^{-1}}$,
\item $g$ is group-like,
\item $S_{\a^{-1}}S_{\a}(x)=g_{\a}x g_{\a}^{-1}$,
\item $g_{\a}^2=u_{\a}S_{\a^{-1}}(u_{\a^{-1}}^{-1})$,
\end{enumerate}
for each $\a,\b\in G$ and $x\in A_{\a}$.
\end{lemma}
\begin{proof}
We will show that each of the above properties is equivalent to the corresponding property in the definition of ribbon element. The equivalence of $(1)$ and $(1')$ follows from $\v_{\b}(u_{\a})=u_{\b\a\b^{-1}}$ \cite[Lemma 6.5, d)]{Virelizier:Hopfgroup}. Similarly, the equivalence of $(2)$ and $(2')$ follows immediately from \cite[Lemma 6.5, f)]{Virelizier:Hopfgroup}. For $(3)$ we have
\begin{align*}
\v_{\a}(x)=v_{\a}^{-1}xv_{\a} \Leftrightarrow \v_{\a}(x)=g_{\a}^{-1}u_{\a}xu_{\a}^{-1}g_{\a} \Leftrightarrow \v_{\a}(x)=g_{\a}^{-1}S_{\a^{-1}}S_{\a}(\v_{\a}(x))g_{\a}
\end{align*}
which is equivalent to $(3')$. In the last equality we used \cite[Lemma 6.5, b)]{Virelizier:Hopfgroup}. Finally, 
\begin{align*}
S_{\a^{-1}}(v_{\a^{-1}})=v_{\a} \Leftrightarrow S_{\a^{-1}}(g_{\a^{-1}})S_{\a^{-1}}(u_{\a^{-1}}^{-1})=u_{\a}^{-1}g_{\a} \Leftrightarrow g_{\a}^{-1}S_{\a^{-1}}(u_{\a^{-1}}^{-1})=u_{\a}^{-1}g_{\a} 
\end{align*}
Since $g$ is a $G$-invariant group-like and $S_{\a^{-1}}S_{\a}(\v_{\a}(x))=u_{\a}xu_{\a}^{-1}$, $g_{\a}$ and $u_{\a}$ commute so that the last equality above is equivalent to $(4')$.
\end{proof}

If $(\uH,\v,R,v)$ is ribbon, we call $\{g_{\a}=u_{\a}v_{\a}\}_{\a\in G}$ the {\em pivot} of $\uH$.



\subsection{Twisted Drinfeld doubles}\label{subs: Hopf twisted DH} Let $H=(H,m,1,\De,\e,S)$ be a finite dimensional Hopf algebra with automorphism group $\Aut(H)$. Following \cite{Virelizier:Graded-QG} we define a quasi-triangular Hopf $\Aut(H)$-coalgebra $\uDHH$ as follows: for each $\a\in\Aut(H)$ set $D(H)_{\a}\eq H^{*}\ot H$ as vector spaces. We define a product $m_{\a}$ on $D(H)_{\a}$ and a coproduct $\De_{\a,\b}:D(H)_{\a\b}\to D(H)_{\a}\ot D(H)_{\b}$ by

\begin{figure}[H]
\centering
\begin{pspicture}(0,-1.62)(12.75,1.62)
\rput[bl](7.28,-0.2){$\Delta_{\alpha,\beta}=$}
\psline[linecolor=black, linewidth=0.026, arrowsize=0.05291667cm 2.0,arrowlength=1.4,arrowinset=0.0]{<-}(8.8,0.3)(9.4,0.3)
\psline[linecolor=black, linewidth=0.026, arrowsize=0.05291667cm 2.0,arrowlength=1.4,arrowinset=0.0]{->}(8.8,-0.3)(9.4,-0.3)
\psbezier[linecolor=black, linewidth=0.026, arrowsize=0.05291667cm 2.0,arrowlength=1.4,arrowinset=0.0]{->}(12.4,1.1)(11.584,1.1)(10.6,-0.11)(10.13,0.21)
\rput[bl](9.64,0.22){$m$}
\rput[bl](9.64,-0.4){$\Delta$}
\psbezier[linecolor=black, linewidth=0.026, arrowsize=0.05291667cm 2.0,arrowlength=1.4,arrowinset=0.0]{->}(12.4,-0.3)(11.14,-0.2)(10.97,1.35)(10.12,0.52)
\psline[linecolor=black, linewidth=0.026, arrowsize=0.05291667cm 2.0,arrowlength=1.4,arrowinset=0.0]{->}(10.2,-0.18)(12.4,0.7)
\psline[linecolor=black, linewidth=0.026, arrowsize=0.05291667cm 2.0,arrowlength=1.4,arrowinset=0.0]{->}(10.2,-0.4)(11.0,-0.7)
\rput[bl](11.21,-0.84){$\alpha^{-1}$}
\psline[linecolor=black, linewidth=0.026, arrowsize=0.05291667cm 2.0,arrowlength=1.4,arrowinset=0.0]{->}(12.0,-0.7)(12.4,-0.7)
\psline[linecolor=black, linewidth=0.026, arrowsize=0.05291667cm 2.0,arrowlength=1.4,arrowinset=0.0]{->}(1.5,1.0)(1.9,1.0)
\rput[bl](2.06,0.9){$\Delta$}
\psline[linecolor=black, linewidth=0.026, arrowsize=0.05291667cm 2.0,arrowlength=1.4,arrowinset=0.0]{->}(2.5,0.8)(2.8,0.6)
\rput[bl](2.04,-1.05){$m$}
\psline[linecolor=black, linewidth=0.026, arrowsize=0.05291667cm 2.0,arrowlength=1.4,arrowinset=0.0]{->}(1.9,-1.0)(1.5,-1.0)
\rput[bl](2.95,0.35){$S^{-1}$}
\psline[linecolor=black, linewidth=0.026, arrowsize=0.05291667cm 2.0,arrowlength=1.4,arrowinset=0.0]{->}(2.8,-0.6)(2.5,-0.8)
\psbezier[linecolor=black, linewidth=0.026, arrowsize=0.05291667cm 2.0,arrowlength=1.4,arrowinset=0.0]{->}(2.5,1.2)(4.8,1.9)(4.8,-1.9)(2.5,-1.2)
\rput[bl](5.1,-0.35){$m$}
\rput[bl](5.06,0.2){$\Delta$}
\psline[linecolor=black, linewidth=0.026, arrowsize=0.05291667cm 2.0,arrowlength=1.4,arrowinset=0.0]{->}(6.0,0.3)(5.54,0.3)
\psline[linecolor=black, linewidth=0.026, arrowsize=0.05291667cm 2.0,arrowlength=1.4,arrowinset=0.0]{->}(5.6,-0.3)(6.0,-0.3)
\psbezier[linecolor=black, linewidth=0.026, arrowsize=0.05291667cm 2.0,arrowlength=1.4,arrowinset=0.0]{->}(5.0,0.5)(4.6,1.1)(3.9,1.6)(1.5,1.6)
\psbezier[linecolor=black, linewidth=0.026, arrowsize=0.05291667cm 2.0,arrowlength=1.4,arrowinset=0.0]{<-}(5.0,-0.5)(4.575,-1.1136364)(4.05,-1.6)(1.5,-1.6)
\rput[bl](2.95,-0.45){${\alpha}^{-1}$}
\psline[linecolor=black, linewidth=0.026, arrowsize=0.05291667cm 2.0,arrowlength=1.4,arrowinset=0.0]{->}(3.2,0.2)(3.2,-0.1)
\psbezier[linecolor=black, linewidth=0.026, arrowsize=0.05291667cm 2.0,arrowlength=1.4,arrowinset=0.0]{->}(2.5,1.0)(4.1,1.0)(4.3,-0.3)(4.9,-0.3)
\psbezier[linecolor=black, linewidth=0.026, arrowsize=0.05291667cm 2.0,arrowlength=1.4,arrowinset=0.0]{<-}(2.5,-1.0)(4.1,-1.0)(4.3,0.3)(4.9,0.3)
\rput[bl](0.0,-0.1){$m_{\alpha}=$}
\rput[bl](6.4,-0.3){,}
\rput[bl](12.7,-0.3){.}
\end{pspicture}
\end{figure}

\noindent Note that if $H$ is a Hopf algebra in $\SVect$ the above formulas involve various signs coming from the symmetry and the right pairings of $\SVect$. The antipode is defined by the conditions $S_{\a}(h)=\a^{-1}(S(h)), S_{\a}(p)=p\circ S^{-1}$ for $h\in H, p\in H^*$ and the fact that it is an algebra antiautomorphism. This defines a Hopf $\Aut(H)$-coalgebra. This is crossed with
\begin{align*}
\v_{\a}(p\ot h)=p\circ\a^{-1}\ot\a(h)
\end{align*}
and quasi-triangular with $R$-matrix
\begin{align}
\label{eq: twisted R-matrix}
R_{\a,\b}=\sum (\e\ot \a(h_i))\ot(h^i\ot 1)=\sum (\e\ot h_i)\ot(h^i\circ\a\ot 1)
\end{align}
where $(h_i)$ is any vector space basis of $H$ and $(h^i)$ is the dual basis of $H^*$ (for the left pairing). It is easy to see that $R_{\a,\b}$ is invertible with inverse
\begin{align}
\label{eq: twisted inverse R-matrix}
R^{-1}_{\a,\b}=(S_{D(H)}\ot\id_{D(H)})(R_{\a,\b}).
\end{align}
We call $\uDHH$ the {\em twisted Drinfeld double} of $H$. If $G$ is a subgroup of $\Aut(H)$, then the restriction $\{D(H)_{\a}\}_{\a\in G}$ to $G$ will be called the {\em $G$-twisted Drinfeld double}, denoted $\uDH|_G$. When $\a=\id_H$ the quasi-triangular Hopf algebra $D(H)=D(H)_{\id_H}$ is the usual Drinfeld double of $H$.

\medskip


\noindent From (\ref{eq: graded Drinfeld element}) we see that the Drinfeld element of $\uDH$ and its inverse are given by 
\begin{figure}[H]
\begin{pspicture}(0,-0.485)(9.6,0.485)
\psbezier[linecolor=black, linewidth=0.026, arrowsize=0.05291667cm 2.0,arrowlength=1.4,arrowinset=0.0]{->}(3.6,0.465)(1.2,0.465)(1.6,-0.335)(2.2,-0.335)
\rput[bl](0.9,0.065){=}
\rput[bl](0.0,0.065){$u_{\alpha}$}
\rput[bl](2.39,-0.455){$S^{-1}$}
\psline[linecolor=black, linewidth=0.026, arrowsize=0.05291667cm 2.0,arrowlength=1.4,arrowinset=0.0]{->}(3.2,-0.335)(3.6,-0.335)
\psbezier[linecolor=black, linewidth=0.026, arrowsize=0.05291667cm 2.0,arrowlength=1.4,arrowinset=0.0]{->}(9.6,0.465)(5.8,0.465)(6.6,-0.335)(7.2,-0.335)
\rput[bl](5.9,0.065){=}
\rput[bl](4.95,-0.015){$u_{\alpha}^{-1}$}
\rput[bl](7.43,-0.485){$S^{2}$}
\psline[linecolor=black, linewidth=0.026, arrowsize=0.05291667cm 2.0,arrowlength=1.4,arrowinset=0.0]{->}(8.0,-0.335)(8.4,-0.335)
\rput[bl](8.67,-0.435){$\alpha$}
\psline[linecolor=black, linewidth=0.026, arrowsize=0.05291667cm 2.0,arrowlength=1.4,arrowinset=0.0]{->}(9.17,-0.335)(9.57,-0.335)
\rput[bl](4.1,-0.435){,}
\end{pspicture}
\end{figure}


\def\Hbos{H_{\text{bos}}}\def\SVect{\text{SVect}}\def\ZZ{\mathcal{Z}}

\medskip

It is easy to see that the right integral of $\uDH$ is given by $\la_{\a}=\La_l\ot\la_r$ for every $\a$. The distinguished group-like of $\uDH$ is
\begin{equation}
\label{eq: dist group-like of TDD}
\gg_{\a}=\rH(\a)^{-1}\zz\ot \gg
\end{equation}
where $\rH(\a)\in \kk^{\t}$ is defined by (\ref{eq: rH}) and $\zz,\gg$ are the distinguished group-likes of $H$. If $H\in \Vect$, this follows from the following computation
\begin{align*}
(\id_{D(H)_a}\ot \la_{\b})\circ \De_{\a\b}(p\ot h)&=p_{(2)}\ot h_{(1)}\ot p_{(1)}(\La_l)\la_r(\a^{-1}(h_{(2)}))\\
&=\rH(\a)^{-1}p_{(2)}p_{(1)}(\La_l)\ot h_{(1)}\la_r(h_{(2)})\\
&=\rH(\a)^{-1}p(\La_l)\zz\ot \la_r(h)\gg\\
&=\la_{\a\b}(p\ot h)\cdot \rH(\a)^{-1}\zz\ot \gg
\end{align*}
where $p\ot h\in D(H)_{\a\b}$. A similar computation holds for $H\in \SVect$.


\subsection{Ribbon elements in the twisted double}\label{subs: Hopf ribbon element KR thm} 

We now characterize the ribbon structures on $\uDH|_G$, generalizing a theorem of Kauffman and Radford \cite[Theorem 13.7.3]{Radford:BOOK}. 

\begin{lemma}
\label{lemma: group-likes of uDH}
The $G$-invariant group-likes of $\uDH|_G$ have the form $\{g_{\a}=p(\a)^{-1}\b \ot b\}_{\a\in G}$ for unique $G$-invariant group-likes $\b\in H^*, b\in H$ and a unique homomorphism $p:G\to\kk^{\t}$.
\end{lemma}
\begin{proof}
If $\{g_{\a}\}$ is a $G$-invariant group-like of $\uDH|_G$, then $g_1$ is a group-like of $D(H)$ so $g_1=\b\ot b$ for unique $\b\in G(H^*), b\in G(H)$. Since $g_1$ is $G$-invariant, $\b,b$ must be $G$-invariant. Now, we have
\begin{align*}
g_{\a}\ot g_{\a^{-1}}=\De_{\a,\a^{-1}}(g_1)=(\b\ot b)\ot(\b\ot b)
\end{align*}
since $\a^{-1}(b)=b$. It follows that $g_{\a}=p_{\a}^{-1}\cdot \b\ot b$ for some scalar $p_{\a}$ (apply $\id\ot f$ where $f:D(H)_{\a^{-1}}\to\kk$ satisfies $f(g_{\a^{-1}})\neq 0$). It is clear that $p_{\a}$ is unique, and the group-like condition implies that $\a\mapsto p_{\a}$ defines an homomorphism $p:G\to \kk^{\t}$.
\end{proof}

\begin{proposition}\label{prop: ribbon of TDD}
The ribbon elements of the $G$-twisted Drinfeld double $\uDH|_G$ are in 1:1 correspondence with triples $(\b,b,p)$ where
\begin{enumerate}
\item $\b\in H^*, b\in H$ are $G$-invariant group-likes such that $S^2_H=\ad_{\b^{-1}}\circ \ad_b$ and $\b^2=\zz, b^2=\gg$ (where $\zz,\gg$ are the distinguished group-likes of $H$).
\item $p:G\to \kk^{\t}$ is an homomorphism such that $p^2=\rH$.
\end{enumerate} 
The ribbon element and pivot of $\uDH|_G$ corresponding to such triple are given by $$\va=\rH(\a)^{-\frac{1}{2}}(\b\ot b)\cdot u_{\a}^{-1}, \ g_{\a}=\rH(\a)^{-\frac{1}{2}}\b\ot b.$$
for each $\a\in G$, where we denote $p(\a)=\rH(\a)^{\frac{1}{2}}$ and $u_{\a}$ is the Drinfeld element.
\end{proposition}

\begin{proof}
By \cite[Theorem 6.9, $b)$]{Virelizier:Hopfgroup} the distinguished group-like $\gg_{\a}$ of $\uDH$ satisfies $\gg_{\a}=u_{\a}S_{\a^{-1}}(u_{\am}^{-1})$ (this also uses that $D(H)$ is unimodular and that the character of \cite[Lemma 6.2]{Virelizier:Hopfgroup} is trivial). This holds in the super-case as well. Thus, by Lemma \ref{lemma: ribbon bijection group-likes} and (\ref{eq: dist group-like of TDD}), ribbon elements of $\uDH|_G$ are in bijection with $G$-invariant group-likes $\{g_{\a}\}$ satisfying $(3')$ and
\begin{align*}
g_{\a}^2=\gg_{\a}=\rH(\a)^{-1}\zz\ot\gg.
\end{align*}
If $(\b,b,p)$ corresponds to $\{g_{\a}\}$ by Lemma \ref{lemma: group-likes of uDH} then $(4')$ is equivalent to
\begin{align*}
\b^2=\zz, \ b^2=\gg, \ p(\a)^2=\rH(\a).
\end{align*}
It is easy to see that $(3')$ is equivalent to $S^2_H=\ad_{\b^{-1}}\circ\ad_b$. This proves the first part of the proposition. The ribbon element associated to such $\{g_{\a}\}$ is then given by 
\begin{align*}
v_{\a}=g_{\a}u_{\a}^{-1}=p(\a)^{-1}(\b\ot b)u_{\a}^{-1}
\end{align*}
as desired.
\end{proof}


\def\dgl{\gg_{\a}}

\medskip

\def\Autbb{\Aut_{(b,\b)}}\def\rH{r_H}


\subsection{Representation theoretic interpretation}\label{subs: Hopf Rep theory} We now give a more familiar description of the representation categories of the above twisted Drinfeld doubles. Since this is not essential for this paper, we just briefly recall the objects involved. We refer to \cite{Turaev:homotopy} for definitions of braided $G$-crossed categories and \cite{GNN:centers} for relative Drinfeld centers. In what follows, $G$ denotes a group (not necessarily finite) with neutral element $1$ and $\Rep(A)$ denotes the category of finite dimensional representations of a $\kk$-algebra $A$.


\def\wCC{\CC}\def\wot{\ot}
\def\DD{\mathcal{D}}
\def\ZZ{\mathcal{Z}}\def\ZZDCt{\ZZ_{\DD}(\CC)}\def\ZZDC{\ZZ_{\DD}(\CC)}\def\ZZCCt{\ZZ_{\DD}(\CC)}\def\ZZa{\ZZ_{\a}}\def\ZZb{\ZZ_{\b}}\def\wtc{c}
\def\Zot{\ot}\def\ZZCC{\ZZ_{\DD}(\CC)}
\medskip

Let $H$ be a finite dimensional Hopf algebra (in $\Vect$, for simplicity) and $G=\Aut(H)$. Since $\uDH$ is a quasi-triangular Hopf $G$-coalgebra, $\Rep\ \uDH:=\coprod_{\a\in G}\Rep(D(H)_{\a})$ is a braided $G$-crossed category in a natural way \cite{Turaev:homotopy}. This category can be defined entirely in terms of the monoidal category $\DD=\Rep(H)$ as follows. For each $X\in\DD$ and morphism $f$ of $\DD$, let $T_{\a}(X)$ be the vector space $X$ with $H$-action given by $h\cdot_{\a}x:=\a^{-1}(h)x$ and $T_{\a}(f)=f$. Then $\a\mapsto T_{\a}$ defines a monoidal action of $G$ on $\DD$. Let $\wCC=\DD\rtimes G$, that is, $\wCC$ is the category of pairs $(X,\a)$ where $X\in\DD,\a\in G$ and $\Hom_{\wCC}((X,\a),(Y,\b))$ equals $\Hom_{\DD}(X,Y)$ if $\a=\b$ and zero otherwise. This is a $G$-graded monoidal category with tensor product on objects given by $$(X,\a)\wot(Y,\b)=(X\ot T_{\a}(Y),\a\b)$$ and on morphisms $(f,\a)\wot(g,\b)=(f\ot T_{\a}(g),\a\b)$.
The neutral component is $\DD$ as a monoidal category.  
We consider the relative Drinfeld center $\ZZCCt$ of this category. An object of $\ZZCC$ is an object $(Y,\a)$ of $\CC$ endowed with a half-braiding $\wtc_{(X,1),(Y,\a)}:(X,1)\wot(Y,\a)\to (Y,\a)\wot (X,1)$, which is the same as a $\DD$-morphism $$c_{X,Y}:X\ot Y\to Y\ot T_{\a}(X)$$ satisfying $c_{X\ot X',Y}=(c_{X,Y}\ot \id_{T_{\a}X'})(\id_X\ot c_{X',Y})$ for each $X,X'\in\DD$. 
As shown in \cite[Example 3.4]{GNN:centers}, $\ZZCC$ is a braided $G$-crossed category in the following way. First note that the group $G$ acts on $\wCC$ by $T_{\b}(Y,\a)=(T_{\b}(Y),\b\a\b^{-1})$. This action extends to $\ZZCCt$ if the half-braiding of $T_{\b}(Y,\a)$ is defined, for $X\in\DD$, by 
\begin{align}
\label{eq: G-action on rel Z}
\wtc_{(X,1),T_{\b}(Y,\a)}=T_{\b}(\wtc_{(T_{\b^{-1}}(X),1),(Y,\a)}).
\end{align}
This is the same as the $\DD$-morphism $T_{\b}(c_{T_{\b^{-1}}(X),Y}).$ The $G$-braiding is defined by the following isomorphisms 
\begin{align*}
(X,\a)\ot (Y,\b)=(X\ot T_{\a}Y,\a\b)&\xrightarrow{c_{X,T_{\a}(Y)}} (T_{\a}Y\ot T_{\a\b\a^{-1}}X,\a\b)\\
&=(T_{\a}Y,\a\b\a^{-1})\ot (X,\a)\\
&=T_{\a}(Y,\b)\ot (X,\a).
\end{align*}

\begin{proposition}
\label{prop: TDD is REL DRINFELD}
Let $H$ be a finite dimensional Hopf algebra and $G=\Aut(H)$. Then, $\Rep \ \uDH$ is equivalent, as a braided $G$-crossed category, to the relative Drinfeld center of the crossed product $\wCC=\Rep(H)\rtimes G$.
\end{proposition}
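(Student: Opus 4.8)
The plan is to exhibit an explicit equivalence of categories $\Phi\colon \uDH\text{-}\mathrm{mod}\to\ZZCC$ and check that it preserves the braided $G$-crossed structure componentwise. First I would treat the underlying linear/monoidal equivalence grading-by-grading: a module over the component $D(H)_\a=H^*\ot H$ is the same data as a vector space $Y$ with an $H$-action and an $H^*$-action (equivalently an $H$-coaction), and the twisted multiplication $\cdot_\a$ is precisely the compatibility that makes $(Y,\a)$ an object of $\ZZCC$. Concretely, given a $D(H)_\a$-module $Y$, the $H^*$-coaction together with the Yetter--Drinfeld-type compatibility built into $\cdot_\a$ produces, for each $X\in\DD=\Rep(H)$, a natural isomorphism $c_{X,Y}\colon X\ot Y\to Y\ot T_\a(X)$; the appearance of $\a^{-1}$ in the definition of $\cdot_\a$ (and in the coproduct $\Deab$) is exactly what makes the target land in $T_\a(X)$ rather than $X$, i.e.\ what tilts an ordinary half-braiding into a \emph{relative} one valued in the crossed product. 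The hexagon/naturality axiom $c_{X\ot X',Y}=(c_{X,Y}\ot\id_{T_\a X'})(\id_X\ot c_{X',Y})$ then follows from the coassociativity and multiplicativity axioms of the Hopf $G$-coalgebra $\uDH$, just as in the ungraded Majid/Drinfeld-double case. This should be essentially the content of Virelizier's construction in \cite{Virelizier:Graded-QG}, so I would cite that and only indicate the correspondence rather than rederiving it.

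Next I would check that $\Phi$ is monoidal and degree-preserving. On objects it sends $D(H)_\a$-modules to the degree-$\a$ part of $\ZZCC$, and the tensor product of $D(H)_{\a}$- and $D(H)_\b$-modules via the coproduct $\Deab$ must match the tensor product in $\ZZCC$, namely $(V\wot W,\wtc_{-,V\wot W})$ with $\wtc_{X,V\wot W}=(\id_V\wot\wtc_{X,W})(\wtc_{X,V}\wot\id_W)$. Unwinding the formula $\Deab(p\ot a)=(p_{(2)}\ot a_{(1)})\ot(p_{(1)}\ot\a^{-1}(a_{(2)}))$ shows that the induced half-braiding on the tensor product is exactly this composite; the factor $\a^{-1}(a_{(2)})$ is what implements the monoidal structure $(X,\a)\wot(Y,\b)=(X\ot T_\a(Y),\a\b)$ of the crossed product on the level of half-braidings. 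Strictness of the action $T_\a$ (Remark after the ribbon axioms) means there are no associativity/coherence isomorphisms to track, which keeps this step routine.

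Then I would verify compatibility of the $G$-crossed data. The crossing $\v_\a(p\ot a)=p\circ\a^{-1}\ot\a(a)$ induces the functor $Y\mapsto {}_{\v_\a}Y$ on modules, and one checks directly that this matches the $G$-action $T_\b$ on $\ZZCC$ given by \eqref{eq: G-action on rel Z}, i.e.\ $\wtc_{(X,1),T_\b(Y,\a)}=T_\b(\wtc_{(T_{\b^{-1}}X,1),(Y,\a)})$; the twist by $\v_\a$ on the algebra is precisely the pullback of the half-braiding through $T_{\b^{-1}}$ on the source $X$ and $T_\b$ on the whole diagram. Finally, the $R$-matrix $R_{\a,\b}=\sum(\e\ot h_i)\ot(h^*_i\circ\a\ot 1)$ acts on a tensor product of modules as the braiding, and I would check this reproduces the $G$-braiding of \cite[Example 4.3]{GNN:centers}, whose degree-$\a$ component is $c_{X,T_\a(Y)}$; this is a direct evaluation of $R_{\a,\b}$ against the two actions, using that the $H^*$-factor $h^*_i\circ\a$ is where the $\a$-twist in the braiding target comes from.

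\textbf{Main obstacle.} None of the individual verifications is deep; the real work — and the only genuinely delicate point — is bookkeeping the three interlocking appearances of $\a^{-1}$ (in $\cdot_\a$, in $\Deab$, and in $\v_\a$) against the three places the $G$-action $T_\a$ enters on the categorical side (the grading of $T_\a(X)$ in the half-braiding target, the crossed-product tensor product, and formula \eqref{eq: G-action on rel Z}), and confirming that all the ``strictness'' isomorphisms $T_\a(X\ot X')\cong T_\a X\ot T_\a X'$ and $T_\a T_\b\cong T_{\a\b}$ really are identities here so that the relative-half-braiding axioms hold on the nose. Since essentially all of this is carried out in \cite{Virelizier:Graded-QG} and \cite{GNN:centers}, I expect the proof in the paper to be short: set up $\Phi$ on objects and morphisms, observe it is an equivalence of $\kk$-linear categories grading-wise, and then cite those references (together with \cite{DN:braided-Picard} for the non-semisimple generalization of \cite{GNN:centers}) for the compatibility of all the extra structure, spelling out only the translation dictionary between the Hopf-group-coalgebra tensors and the categorical operations.
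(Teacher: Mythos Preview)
Your proposal is correct and follows essentially the same route as the paper. The only cosmetic difference is directional: the paper starts from an object of $\ZZCC$ and, via naturality, extracts from the half-braiding $c_{H,X}(1\ot-)$ an $H$-coaction (hence a left $H^*$-action) whose compatibility with the $H$-action is exactly the $\cdot_\a$-relation, whereas you start from a $D(H)_\a$-module and build the half-braiding; the verifications you list for the monoidal structure, the $G$-action, and the $R$-matrix match the paper's step by step.
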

\begin{proof}
The proof is very similar to that of the $G=1$ case (see e.g. \cite{Kassel:BOOK}) so we only sketch it. Denote $D_{\a}=D(H)_{\a}$ for each $\a$. The idea is that if $(X,\a)$ is an object of $\ZZCC$ then, by naturality, the whole half-braiding is encoded in the half-braiding $c_{H,X}:H\ot X\to X\ot T_{\a}H$. Again by naturality, this map is determined by $c_{H,X}(1\ot -)$ which is a right $H$-comodule structure on $X$, or equivalently, a left $H^*$-module structure. Noting that the $H$-factor in the target of $c_{H,X}$ has a twisted $H$-action (twisted by $\a^{-1}$), the condition that $c_{H,X}$ is an $H$-module map translates into $X$ being a module over $D_{\a}$. This correspondence is clearly an equivalence and it is monoidal: since the tensor product $(X,\a)\ot (Y,\b)=(X\ot T_{\a}Y,\a\b)$ twists by $\a^{-1}$ the $H$-module structure on $Y$, the comultiplication $\De_{\a\b}$ is twisted by $\a^{-1}$ in the second $H$-factor, which is exactly the formula we gave for $\De_{\a\b}$. Since the $G$-action on $\ZZCC$ is $T_{\a}(Y,\b)=(T_{\a}Y,\a\b\a^{-1})$ the $H$-module structure of $T_{\a}(Y,\b)$ is twisted by $\a^{-1}$ so $\v_{\a}:D_{\b}\to D_{\a\b\a^{-1}}$ acts by $\a$ on $H\sb D_{\a}$. Now, the $H$-comodule structure of $T_{\a}(Y,\b)$ is given by the half-braiding $c_{T_{\a^{-1}}H,Y}$ (see (\ref{eq: G-action on rel Z})). Since $\a:H\to T_{\a^{-1}}H$ is an $H$-module map, by naturality one finds 
\begin{equation}
\label{eq: Z rel proof}
c_{T_{\a^{-1}}H,Y}(1\ot y)=(\id\ot \a)c_{H,Y}(1\ot y).
\end{equation}
Thus, the $H$-comodule structure of $T_{\a}Y$ is twisted by $\a$, so that $\v_{\a}$ acts by $\v_{\a}(h^*)=h^*\circ\a$ on $H^*$. Taken together, this shows the equivalence is a $G$-crossed equivalence. Finally, since $\Rep (\uDH)=\ZZCC$ as $G$-categories, the braiding of $\ZZCC$ must be represented by an $R$-matrix $R'_{\a,\b}$ of $\uDH$. By naturality and (\ref{eq: Z rel proof}) above, this is given by
\begin{align*}
P(R'_{\a,\b})&=c_{T_{\a^{-1}}D_{\a},D_{\b}}(1_{\a}\ot 1_{\b})\\
&=(\id\ot\a)c_{H,D_{\b}}(1_{\a}\ot 1_{\b})
\end{align*}
where $P$ is the switch map $P(x\ot y)=y\ot x$ and on the bottom we think of $H$ as a subalgebra of $T_{\a^{-1}}D_{\a}$. But, $c_{H,D_{\b}}(1_{\a}\ot 1_{\b})=h^i\ot h_i$ as in the untwisted case, where $h_i$ is a basis of $H$ and $h^i$ is its dual basis. Therefore, $R'_{\a,\b}=P((\id\ot\a)(h^i\ot h_i))=R_{\a,\b}$ where $R_{\a,\b}$ is as in (\ref{eq: twisted R-matrix}).

\end{proof}


\begin{remark}
\label{remark: categorical interp SUPER double}
Suppose $H$ is a Hopf algebra in $\SVect$. Then, it is not true that $\Rep_S(D(H))$ is the Drinfeld center of $\Rep_S(H)$ (the $S$ subscript indicates that modules are super-vector spaces). Indeed, $\Rep_S(H)$ is equivalent to $\Rep(\Hbos)$ where $\Hbos$ is the {\em bosonization} of $H$. As an algebra, $\Hbos=\kk[\Z/2\Z]\ltimes H$ and so $D(\Hbos)$ has two additional group-likes. This double is itself the bosonization of a Hopf algebra $D$ in $\SVect$ such that $\Rep_S(D)=\ZZ(\Rep_S(H))$, and this is an extension of $D(H)$ by a single group-like. The category $\Rep_S(D(H))$ turns out to be equivalent to the full (braided) subcategory of $\ZZ(\Rep_S(H))$ consisting of objects $V\in\Rep_S(H)$ with a half-braiding $\{\s_{X,V}:X\ot V\to V\ot X\}_{X\in\Rep_S(H)}$ for which $\s_{I,V}$ coincides with the symmetry $c_{I,V}:I\ot V\to V\ot I$ of $\SVect$. Here $I$ denotes the unique non-trivial invertible object of $\SVect$, this is an $H$-module via $\e$.
\end{remark}


\section{Universal Reshetikhin-Turaev invariants of $G$-tangles}\label{section: quantum invariants of G-tangles}

\def\pmX{\p_-X}\def\ppX{\p_+X}\def\TT{\mathcal{T}}\def\pvX{\p_vX}

In this section we define the universal twisted invariant of a framed, oriented, $G$-tangle with no closed components out of a ribbon Hopf $G$-coalgebra. In Subsection \ref{subs: inv of G-knots} we explain how to get invariants of framed and unframed $G$-knots as closures of long $G$-knots. In Subsection \ref{subs: invs from TDDs} we specialize to twisted Drinfeld doubles and in Subsection \ref{subs: invs in Z-GRADED case} we explain how to lift our invariants to polynomials invariants whenever the Hopf algebras are $\Z$-graded.

\subsection{$G$-tangles and (long) $G$-knots} Let $X=\R\t[-1,\infty)\t [0,1]$ and let $\pmX=\R\t[-1,\infty)\t \{0\}$, $\ppX= \R\t[-1,\infty)\t\{1\}$, $\pvX=\R\t\{-1\}\t[0,1]$. We think of the $x$-coordinate as an horizontal line in the plane of the page oriented from left to right, the $y$-coordinate as a line transversal to the page and oriented towards it (so $(0,\infty)$ lies behind the page), and the $z$-coordinate as a vertical line in the plane of the page oriented upwards. By a {\em $(p,q)-$tangle} we mean a framed, oriented tangle $T\sb X$ such that $T\cap \pmX=\{(i,0,0)\}_{i=1}^p$, $T\cap \ppX=\{(i,0,1)\}_{i=1}^q$, $T\cap \p X=T\cap (\pmX\cup\ppX)=\p T$ and the intersection is transversal. Moreover, the framing of $T$ near a point $(i,0,0)$ (resp. $(i,0,1)$) is $(i-\d,0,0)$ (resp. $(i-\d,0,1)$) for some small $\d>0$. Let $X_T=X\sm T$ and $z\in \pvX$ a basepoint. A {\em $(p,q)$-$G$-tangle} is a $(p,q)$-tangle endowed with a group homomorphism $\rho:\pi_1(X_T,z)\to G$. Note that since $\pvX$ is contractible, the choice of basepoint $z$ is irrelevant. Two $G$-tangles $(T,\rho)$ and $(T',\rho')$ are isotopic if there is an isotopy $d_t:X\to X$ rel. $\p X$ from $T$ to $T'$ such that $\rho'\circ (d_1)_*=\rho$. 
Isotopy classes of $G$-tangles form the morphisms of a $G$-crossed ribbon category, see \cite{Turaev:homotopy} for details.

\medskip

A $(p,q)$-$G$-tangle $(T,\rho)$ with a single component will be called a {\em $G$-knot} if $p=q=0$ and a {\em long $G$-knot} if $p=q=1$.

\medskip


\subsection{Invariants of $G$-tangles}
\label{subs: invs of G-tangles}

\def\Hg{A_{\a}}\def\za{z_{\a}}\def\Hc{A_{\c}}
Let $(\uHH,\v,R,v)$ be a ribbon Hopf $G$-coalgebra with pivot $\{\ga\}_{\a\in G}$. Let $(T=T_1\cup\dots\cup T_m, \rho)$ be an ordered, oriented, framed $G$-tangle with no closed components. 
\medskip


Let $D$ be a planar diagram of $T$. We draw $D$ in the plane and assume it comes with the blackboard framing. We also suppose $D$ is oriented upwards at each crossing, so it is built of the following local pieces:
\begin{figure}[H]
\begin{pspicture}(0,-0.59241927)(9.047577,0.59241927)
\psline[linecolor=black, linewidth=0.026](2.0275772,-0.56484205)(2.827577,0.23515792)
\psline[linecolor=white, linewidth=0.026, doubleline=true, doublesep=0.026, doublecolor=black](2.4275773,0.23515792)(3.2275772,-0.56484205)
\psline[linecolor=black, linewidth=0.026](1.2275772,-0.56484205)(0.4275772,0.23515792)
\psline[linecolor=white, linewidth=0.026, doubleline=true, doublesep=0.026, doublecolor=black](0.82757723,0.23515792)(0.02757721,-0.56484205)
\psline[linecolor=black, linewidth=0.026, arrowsize=0.05291667cm 2.0,arrowlength=1.4,arrowinset=0.0]{<-}(0.02757721,0.63515794)(0.4275772,0.23515792)
\psline[linecolor=black, linewidth=0.026, arrowsize=0.05291667cm 2.0,arrowlength=1.4,arrowinset=0.0]{<-}(1.2275772,0.63515794)(0.82757723,0.23515792)
\psline[linecolor=black, linewidth=0.026, arrowsize=0.05291667cm 2.0,arrowlength=1.4,arrowinset=0.0]{<-}(2.0275772,0.63515794)(2.4275773,0.23515792)
\psline[linecolor=black, linewidth=0.026, arrowsize=0.05291667cm 2.0,arrowlength=1.4,arrowinset=0.0]{<-}(3.2275772,0.63515794)(2.827577,0.23515792)
\psbezier[linecolor=black, linewidth=0.026](7.6275773,-0.26484206)(7.6275773,0.03515793)(7.427577,0.33515793)(7.227577,0.3351579284667969)(7.0275774,0.33515793)(6.827577,0.03515793)(6.827577,-0.26484206)
\psline[linecolor=black, linewidth=0.026, arrowsize=0.09cm 2.0,arrowlength=0.6,arrowinset=0.2]{->}(7.207577,0.33515793)(7.294244,0.33515793)
\psbezier[linecolor=black, linewidth=0.026](9.027577,0.33515793)(9.027577,0.03515793)(8.827578,-0.26484206)(8.627577,-0.2648420715332031)(8.427577,-0.26484206)(8.227577,0.03515793)(8.227577,0.33515793)
\psline[linecolor=black, linewidth=0.026, arrowsize=0.09cm 2.0,arrowlength=0.6,arrowinset=0.2]{->}(8.607577,-0.26484206)(8.694243,-0.26484206)
\psbezier[linecolor=black, linewidth=0.026](5.427577,0.33515793)(5.427577,0.03515793)(5.6275773,-0.26484206)(5.827577,-0.2648420715332031)(6.0275774,-0.26484206)(6.227577,0.03515793)(6.227577,0.33515793)
\psline[linecolor=black, linewidth=0.026, arrowsize=0.09cm 2.0,arrowlength=0.6,arrowinset=0.2]{->}(5.847577,-0.26484206)(5.7609105,-0.26484206)
\psbezier[linecolor=black, linewidth=0.026](4.0275774,-0.26484206)(4.0275774,0.03515793)(4.227577,0.33515793)(4.427577,0.3351579284667969)(4.6275773,0.33515793)(4.827577,0.03515793)(4.827577,-0.26484206)
\psline[linecolor=black, linewidth=0.026, arrowsize=0.09cm 2.0,arrowlength=0.6,arrowinset=0.2]{->}(4.447577,0.33515793)(4.3609104,0.33515793)
\end{pspicture}
\end{figure}

\noindent By an edge of $D$ we mean a subarc of the diagram having no overpasses in its interior and ending at underpasses. For each edge $e$ of $D$ we let $\c_e\in\pi_1(X_T,z)$ be the homotopy class of the loop that goes from the basepoint $z\in \p_v X$ to a point close to $e$ along a linear path, then encircles $e$ once with linking number $-1$ and finally comes back  to $z$ with a linear path in the opposite direction. Thus, the edges of $D$ become labeled by elements of $G$, where the label of $e$ is $\rho(\c_e)$:

\begin{figure}[h]
\centering
\begin{pspicture}(0,-1.275)(9.49,1.275)
\psline[linecolor=black, linewidth=0.026, arrowsize=0.05291667cm 2.0,arrowlength=1.4,arrowinset=0.0]{<-}(3.1,0.825)(3.7,0.225)
\psline[linecolor=black, linewidth=0.026, arrowsize=0.05291667cm 2.0,arrowlength=1.4,arrowinset=0.0]{<-}(4.7,0.825)(4.1666665,0.29166666)
\psline[linecolor=white, linewidth=0.026, doubleline=true, doublesep=0.026, doublecolor=black](3.7,0.225)(4.7,-0.775)
\psline[linecolor=white, linewidth=0.026, doubleline=true, doublesep=0.026, doublecolor=black](4.1666665,0.29166666)(3.1,-0.775)
\psbezier[linecolor=white, linewidth=0.026, doubleline=true, doublesep=0.026, doublecolor=black](0.5222222,-0.3824074)(2.4333334,-0.22685185)(3.1592593,-0.49351853)(3.5148149,-0.575)(3.8703704,-0.6564815)(3.8777778,-0.3824074)(3.6925926,-0.2935185)
\psbezier[linecolor=black, linewidth=0.026, arrowsize=0.05291667cm 2.0,arrowlength=1.4,arrowinset=0.0](0.5222222,-0.3824074)(1.462963,0.039814815)(3.1666667,-0.08611111)(3.5296297,-0.23425925925925925)
\psdots[linecolor=black, dotsize=0.14](0.5222222,-0.3824074)
\rput[bl](1.7222222,-0.7675926){$\gamma_e$}
\psline[linecolor=black, linewidth=0.026, arrowsize=0.05291667cm 2.0,arrowlength=1.4,arrowinset=0.0]{->}(1.9051852,-0.09574074)(2.1666667,-0.08092593)
\rput[bl](4.7,0.225){$e$}
\rput[bl](2.79,0.235){$e'$}
\rput[bl](4.7,-0.475){$e''$}
\rput[bl](0.0,-0.575){$z$}
\psline[linecolor=black, linewidth=0.026, arrowsize=0.05291667cm 2.0,arrowlength=1.4,arrowinset=0.0]{<-}(7.5,0.825)(8.1,0.225)
\psline[linecolor=black, linewidth=0.026, arrowsize=0.05291667cm 2.0,arrowlength=1.4,arrowinset=0.0]{<-}(9.1,0.825)(8.566667,0.29166666)
\psline[linecolor=white, linewidth=0.026, doubleline=true, doublesep=0.026, doublecolor=black](8.1,0.225)(9.1,-0.775)
\psline[linecolor=white, linewidth=0.026, doubleline=true, doublesep=0.026, doublecolor=black](8.566667,0.29166666)(7.5,-0.775)
\rput[bl](7.1,-1.175){$\alpha$}
\rput[bl](9.3,-1.275){$\beta$}
\rput[bl](6.7,0.925){$\alpha\beta\alpha^{-1}$}
\end{pspicture}
\caption{A diagram of a crossing with three edges $e,e',e''$. The paths $\c_e,\c_{e'},\c_{e''}$ satisfy $\c_{e'}=\c_e\c_{e''}\c_e^{-1}$.  On the right we draw the correponding $G$-colored diagram, where $\a=\rho(\c_e),\beta=\rho(\c_{e''})$.}
\label{figure: tangle with loops}
\end{figure}
 
To define an invariant of $(T,\rho)$ out of $(\uH,\v,R,v)$ we associate black and white beads to each of the local pieces of the $G$-colored diagram as follows:

\begin{figure}[H]
\centering
\begin{pspicture}(0,-1.055)(14.093333,1.055)
\psline[linecolor=black, linewidth=0.026](3.5333333,-0.5587037)(4.3333335,0.24129629)
\psline[linecolor=white, linewidth=0.026, doubleline=true, doublesep=0.026, doublecolor=black](3.9333334,0.24129629)(4.733333,-0.5587037)
\psline[linecolor=black, linewidth=0.026](1.5333333,-0.5587037)(0.73333335,0.24129629)
\psline[linecolor=white, linewidth=0.026, doubleline=true, doublesep=0.026, doublecolor=black](1.1333333,0.24129629)(0.33333334,-0.5587037)
\psline[linecolor=black, linewidth=0.026, arrowsize=0.05291667cm 2.0,arrowlength=1.4,arrowinset=0.0]{<-}(0.33333334,0.64129627)(0.73333335,0.24129629)
\psline[linecolor=black, linewidth=0.026, arrowsize=0.05291667cm 2.0,arrowlength=1.4,arrowinset=0.0]{<-}(1.5333333,0.64129627)(1.1333333,0.24129629)
\psline[linecolor=black, linewidth=0.026, arrowsize=0.05291667cm 2.0,arrowlength=1.4,arrowinset=0.0]{<-}(3.5333333,0.64129627)(3.9333334,0.24129629)
\psline[linecolor=black, linewidth=0.026, arrowsize=0.05291667cm 2.0,arrowlength=1.4,arrowinset=0.0]{<-}(4.733333,0.64129627)(4.3333335,0.24129629)
\psdots[linecolor=black, dotsize=0.16](0.73333335,-0.1587037)
\psdots[linecolor=black, dotsize=0.16](1.1333333,-0.1587037)
\psdots[linecolor=black, dotsize=0.16](3.9333334,0.24129629)
\psdots[linecolor=black, dotsize=0.16](4.3333335,0.24129629)
\rput[bl](0.13333334,-0.2587037){$s_{\alpha}$}
\rput[bl](1.4333333,-0.3587037){$t_{\beta}$}
\rput[bl](3.1333334,0.1412963){$\overline{s_{\alpha}}$}
\rput[bl](4.733333,0.041296296){$\overline{t_{\beta}}$}
\psdots[linecolor=black, fillstyle=solid, dotstyle=o, dotsize=0.14, fillcolor=white](0.67777777,0.28203705)
\psdots[linecolor=black, fillstyle=solid, dotstyle=o, dotsize=0.14, fillcolor=white](3.8333333,-0.28833333)
\rput[bl](0.0,0.20425926){$\varphi_{\alpha}$}
\rput[bl](2.9407408,-0.3475926){$\varphi_{\alpha^{-1}}$}
\rput[bl](0.085185185,-0.955){$\alpha$}
\rput[bl](1.5185186,-1.055){$\beta$}
\rput[bl](3.3185184,0.845){$\alpha$}
\rput[bl](4.8185186,0.745){$\beta$}
\psbezier[linecolor=black, linewidth=0.026](11.833333,-0.1587037)(11.833333,0.1412963)(11.633333,0.4412963)(11.433333,0.4412962962962962)(11.233334,0.4412963)(11.033334,0.1412963)(11.033334,-0.1587037)
\psline[linecolor=black, linewidth=0.026, arrowsize=0.09cm 2.0,arrowlength=0.6,arrowinset=0.2]{->}(11.413333,0.4412963)(11.5,0.4412963)
\psbezier[linecolor=black, linewidth=0.026](13.333333,0.4412963)(13.333333,0.1412963)(13.133333,-0.1587037)(12.933333,-0.15870370370370382)(12.733334,-0.1587037)(12.533334,0.1412963)(12.533334,0.4412963)
\psline[linecolor=black, linewidth=0.026, arrowsize=0.09cm 2.0,arrowlength=0.6,arrowinset=0.2]{->}(12.913333,-0.1587037)(13.0,-0.1587037)
\psdots[linecolor=black, dotsize=0.14](13.3133335,0.24129629)
\psdots[linecolor=black, dotsize=0.14](11.0633335,0.041296296)
\rput[bl](13.573334,0.121296294){$g_{\alpha}^{-1}$}
\rput[bl](10.463333,-0.038703702){$g_{\alpha}$}
\psbezier[linecolor=black, linewidth=0.026](8.533334,0.4412963)(8.533334,0.1412963)(8.733334,-0.1587037)(8.933333,-0.15870370370370382)(9.133333,-0.1587037)(9.333333,0.1412963)(9.333333,0.4412963)
\psline[linecolor=black, linewidth=0.026, arrowsize=0.09cm 2.0,arrowlength=0.6,arrowinset=0.2]{->}(8.953333,-0.1587037)(8.866667,-0.1587037)
\psbezier[linecolor=black, linewidth=0.026](6.8333335,-0.1587037)(6.8333335,0.1412963)(7.0333333,0.4412963)(7.233333,0.4412962962962962)(7.4333334,0.4412963)(7.633333,0.1412963)(7.633333,-0.1587037)
\psline[linecolor=black, linewidth=0.026, arrowsize=0.09cm 2.0,arrowlength=0.6,arrowinset=0.2]{->}(7.2533336,0.4412963)(7.1666665,0.4412963)
\psdots[linecolor=black, dotsize=0.14](6.863333,0.041296296)
\psdots[linecolor=black, dotsize=0.14](9.3133335,0.2312963)
\rput[bl](6.233333,-0.058703702){$1_{\alpha}$}
\rput[bl](9.653334,0.1412963){$1_{\alpha}$}
\rput[bl](7.5333333,-0.5087037){$\alpha$}
\rput[bl](10.933333,-0.5187037){$\alpha$}
\rput[bl](8.443334,0.64129627){$\alpha$}
\rput[bl](13.233334,0.6512963){$\alpha$}
\end{pspicture}
\end{figure}

\noindent For positive (resp. negative) crossings, the black beads represent $R_{\a,\b}=\sum s_{\a}\ot t_{\b}$ (resp. $R_{\a,\b}^{-1}=\sum \ov{s_{\a}}\ot\ov{t_{\b}}$) where $\a,\b$ are the $G$-labels of the corresponding edges and the white bead represents $\v_{\a}$ (resp. $\v_{\a^{-1}}$).  
Now, for each cap or cup of the diagram labeled by $\a\in G$, we put a black bead representing either $1_{\a}$ (left caps/cups) or the pivot $g_{\a}^{\pm 1}$ (right caps/cups). Note that this is where we use the ribbon element $v_{\a}$ of $\Ha$ as $g_{\a}=u_{\a}v_{\a}$.

\medskip

\noindent With these conventions, all the black beads lying over an edge $e$ of $D$ labeled by $\b$ represent an element of $\Hb$. Now follow the orientation of $e$ from its starting point and multiply the black beads encountered from right to left. This results in an element $x_e\in A_{\b}$. If an edge $e'$ follows $e$ in the orientation of $D$, then there is a white bead $\v_{\a^{\pm 1}}$ in between $e$ and $e'$, where $\a$ is the label of the overpass. We ``multiply" $x_{e'}$ with $x_e$ by taking $x_{e'}\v_{\a^{\pm 1}}(x_e)$ and we think of this as the new bead of $e'$. One can think that $x_e$ is being slided through the crossing and that when it passes through the white bead labeled with $\v_{\a^{\pm 1}}$ then it must be evaluated on it. If we do this for all edges of all components of $T$, this results in an element $$z_D\in A_{\c_1}\ot\dots\ot A_{\c_m}$$
where $\c_i$ is the label of the endpoint of $T_i$ for each $i=1,\dots,m.$ This procedure also makes sense for Hopf $G$-coalgebras in $\SVect$. Here one has to suppose the set of crossings is totally ordered (the order turns out to be irrelevant since the $R$-matrices have degree zero). Then, when following the orientation of $T$, the black beads are reordered and this introduces various signs into our formula.

\def\col{\Gamma}

\def\ha{h_{\a}}\def\hc{h_{\c}}

\begin{example}

Consider the diagram of an open right trefoil $T$ as in Figure \ref{fig: open right trefoil with dots} and let $\rho:\pi_1(X_T)\to G$ be an homomorphism. Let $e_1,e_2,e_3$ be the edges of this diagram, starting from the second edge from top to bottom. We denote by $b_1,b_2,b_3\in G$ the associated labels via $\rho$, that is $b_i=\rho(\c_{e_i})$. From top to bottom, the pairs of black beads on the crossings represent the $R$-matrices $R_{b_2,b_1}, R_{b_1,b_3}, R_{b_3,b_2}$ respectively. If we denote $R_{\a,\b}=\sum s_{\a}\ot t_{\b}$ for each $\a,\b\in G$ then we get
\begin{align*}
z_D=\sum \v_{b_2}(t_{b_1}s_{b_1}\v_{b_3}(t_{b_2}g_{b_2}s_{b_2}\v_{b_1}(t_{b_3}s_{b_3})))
\end{align*}
If $\uH$ is a Hopf $G$-coalgebra in $\SVect$, each term in the above sum has to be multiplied by the sign $(-1)^{|s_{b_3}||t_{b_2}|+|t_{b_3}||t_{b_2}|+|s_{b_2}||t_{b_2}|+|s_{b_2}||s_{b_1}|+|s_{b_2}||t_{b_2}|}$.
\begin{figure}[h]
\centering

\begin{pspicture}(0,-2.203479)(2.72,2.203479)
\psline[linecolor=black, linewidth=0.026, arrowsize=0.05291667cm 2.0,arrowlength=1.4,arrowinset=0.0]{->}(0.06,1.4965209)(0.06,2.296521)
\psline[linecolor=black, linewidth=0.026, arrowsize=0.05291667cm 2.0,arrowlength=1.4,arrowinset=0.0](0.06,-1.5034791)(0.06,-2.203479)
\psbezier[linecolor=black, linewidth=0.026, arrowsize=0.05291667cm 2.0,arrowlength=1.4,arrowinset=0.0](0.06,1.4965209)(0.06,0.89652085)(1.06,1.0965209)(1.06,0.4965208435058594)
\psbezier[linecolor=white, linewidth=0.026, doubleline=true, doublesep=0.026, doublecolor=black](1.06,1.4965209)(1.06,0.89652085)(0.06,1.0965209)(0.06,0.4965208435058594)
\psbezier[linecolor=black, linewidth=0.026, arrowsize=0.05291667cm 2.0,arrowlength=1.4,arrowinset=0.0](1.06,1.4965209)(1.06,2.4965208)(2.66,2.4965208)(2.66,0.09652084350585938)
\psbezier[linecolor=black, linewidth=0.026, arrowsize=0.05291667cm 2.0,arrowlength=1.4,arrowinset=0.0](1.06,-1.5034791)(1.06,-2.5034792)(2.66,-2.5034792)(2.66,-0.10347915649414062)
\psline[linecolor=black, linewidth=0.026, arrowsize=0.05291667cm 2.0,arrowlength=1.4,arrowinset=0.0](2.66,0.09652084)(2.66,-0.103479154)
\psdots[linecolor=black, dotsize=0.14](0.34,0.89652085)
\psdots[linecolor=black, fillstyle=solid, dotstyle=o, dotsize=0.14, fillcolor=white](0.33,1.0965209)
\psdots[linecolor=black, dotsize=0.14](0.79,0.88652086)
\psbezier[linecolor=black, linewidth=0.026, arrowsize=0.05291667cm 2.0,arrowlength=1.4,arrowinset=0.0](0.06,0.49652085)(0.06,-0.103479154)(1.06,0.09652084)(1.06,-0.5034791564941407)
\psbezier[linecolor=white, linewidth=0.026, doubleline=true, doublesep=0.026, doublecolor=black](1.06,0.49652085)(1.06,-0.103479154)(0.06,0.09652084)(0.06,-0.5034791564941407)
\psdots[linecolor=black, dotsize=0.14](0.34,-0.103479154)
\psdots[linecolor=black, dotstyle=o, dotsize=0.14, fillcolor=white](0.33,0.09652084)
\psdots[linecolor=black, dotsize=0.14](0.79,-0.11347916)
\psbezier[linecolor=black, linewidth=0.026, arrowsize=0.05291667cm 2.0,arrowlength=1.4,arrowinset=0.0](0.06,-0.5034792)(0.06,-1.1034791)(1.06,-0.90347916)(1.06,-1.5034791564941405)
\psbezier[linecolor=white, linewidth=0.026, doubleline=true, doublesep=0.026, doublecolor=black](1.06,-0.5034792)(1.06,-1.1034791)(0.06,-0.90347916)(0.06,-1.5034791564941405)
\psdots[linecolor=black, dotsize=0.14](0.34,-1.1034791)
\psdots[linecolor=black, dotstyle=o, dotsize=0.14, fillcolor=white](0.33,-0.90347916)
\psdots[linecolor=black, dotsize=0.14](0.79,-1.1134791)
\psdots[linecolor=black, dotsize=0.14](1.25,1.9765209)
\rput[bl](1.26,0.49652085){$b_1$}
\rput[bl](2.46,1.6965208){$b_2$}
\rput[bl](1.26,-0.5034792){$b_3$}
\end{pspicture}
\caption{Beads on a diagram of a long $G$-knot whose closure is a right trefoil.}
\label{fig: open right trefoil with dots}
\end{figure}

\end{example}

\begin{proposition}
The element $z_D$ defined above is an isotopy invariant of the (framed, oriented) $G$-tangle $(T,\rho)$.
\end{proposition}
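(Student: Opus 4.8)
The plan is to proceed as in the standard proof that Habiro--Ohtsuki-type universal invariants are isotopy invariants, carrying the $G$-grading and the crossings $\v$ along as bookkeeping. First I would invoke the Reidemeister theorem for framed oriented tangles: if $D$ and $D'$ are two diagrams of isotopic framed oriented $G$-tangles $(T,\rho)$, then they are connected by a finite sequence of local moves --- planar isotopies of the diagram (sliding beads along strands, moving caps, cups and critical points of the height function), the oriented Reidemeister moves RII and RIII, the framed Reidemeister move RI, and the turn-back (zig-zag) moves straightening a cap against a cup --- and, moreover, each such move induces a Wirtinger-type change of the edge labels coming from Van Kampen's theorem. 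It then suffices to check that $z_D=z_{D'}$ for each move, verifying at the same time that the $G$-labels appearing as subscripts of the beads, of the $R$-matrices, of the pivots $g_{\a}$ and of the $\v$-dots on the two sides agree under the induced relabeling.

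The correspondence between moves and axioms is as follows. The planar moves are immediate from associativity of the products $m_{\a}$ (beads multiply) and from the fact that each $\v_{\b}$ is an algebra isomorphism with $\v_{\a\b}=\v_{\a}\v_{\b}$ (a bead passing through a white dot is replaced by its $\v_{\b}$-image). The move RII uses the defining identity $R_{\a,\b}\,\ov{R}_{\a,\b}=1$ in the parallel case, and in the antiparallel case additionally $\ov{R}_{\a,\b}=(S_{\a}\otimes\id)(R_{\a,\b})$ together with the graded antipode axiom. The move RIII follows from the two graded Yang--Baxter equations (axioms (1)--(2) of Subsection~\ref{subs: Hopf G-coalgebras}), the crossing compatibilities (1)--(3) of the same subsection, and the crossed-braiding naturality relation $R_{\a,\b}\De_{\a,\b}(x)=\bigl(\tau_{\b,\a}(\v_{\a^{-1}}\otimes\id)\De_{\a\b\a^{-1},\a}(x)\bigr)R_{\a,\b}$. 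The turn-back moves use the antipode axiom, the identity $S_{\a^{-1}}S_{\a}=\ad_{g_{\a}}$, and the fact that the pivot $\{g_{\a}\}$ is a group-like, $\v$-invariant family. Finally RI uses the ribbon axioms $\De_{\a,\b}(v_{\a\b})=(v_{\a}\otimes v_{\b})\bigl(\tau_{\b,\a}(\v_{\a^{-1}}\otimes\id)R_{\a\b\a^{-1},\a}\bigr)R_{\a,\b}$, $S_{\a}(v_{\a})=v_{\a^{-1}}$ and $\v_{\b}(v_{\a})=v_{\b\a\b^{-1}}$, together with $g_{\a}=v_{\a}u_{\a}$ and the fact that $u_{\a}$ implements the square of the antipode. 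Each of these is a finite formal computation, identical to the untwisted universal invariant except for the indices; in the case of interest ($A_{\a}=D(H)_{\a}$) one could instead deduce them all at once from the observation (the remark following the definition of ribbon Hopf $G$-coalgebra, together with Proposition~\ref{prop: TDD is REL DRINFELD}) that $\coprod_{\a}\Rep(A_{\a})$ is a $G$-crossed ribbon category, so that $z_D$ is the universal form of Turaev's Reshetikhin--Turaev functor on $G$-tangles.

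Two further points. For a closed (circle) component the product of beads depends a priori on a starting point; I would note that the resulting quantity is a pivotal trace and hence cyclically invariant, since $S^2$ is conjugation by the pivot, so the choice is irrelevant. The matching of $G$-labels under each move is a bookkeeping exercise using the description of the loops $\c_e$ in Figure~\ref{figure: tangle with loops}: at an underpass the meridian of the lower strand gets conjugated by the meridian of the over-strand, which is exactly the index shift built into the $R$-matrices and the $\v$-dots. I expect the only genuine obstacle to be RIII: although RII reduces its various oriented versions to essentially one, verifying that case requires tracking simultaneously the three $G$-labels, the two $R$-matrix beads and the $\v$-dot at each crossing, and the direction in which the legs of each $R$-matrix are read, so the computation is lengthy although each step is routine. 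The antiparallel RII and the moves that slide a crossing past a cap or a cup are next in delicacy, because of the grading shift induced by $S_{\a}\colon A_{\a}\to A_{\a^{-1}}$; everything else is straightforward.
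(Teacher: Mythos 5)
Your proposal follows the same route as the paper: reduce to colored Reidemeister moves for diagrams of $G$-tangles and verify each move against the axioms of a ribbon Hopf $G$-coalgebra (the paper simply cites Turaev's book for the case-by-case checks you spell out). The only caveat is your aside on closed components: in the universal (uncolored) setting the product of beads along a closed component is not literally a pivotal trace, so cyclic invariance requires passing to a quotient or applying an invariant functional, but this does not affect the components with endpoints that the paper actually uses.
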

\begin{proof}
This is because the usual Reidemeister moves between tangle diagrams can be extended to colored Reidemeister moves between colored diagrams of $G$-tangles and then it holds that two $G$-tangles are isotopic if and only if their colored diagrams are related by colored Reidemeister moves. Since we used upward oriented diagrams, the Reidemeister moves can be taken as in \cite[Section 3.2]{Ohtsuki:BOOK}. That $z_D$ is invariant under colored Reidemeister moves follows directly from the axioms of a ribbon Hopf $G$-coalgebra and therefore it is an isotopy invariant. For more details, see \cite{Turaev:BOOK-HQFT}. 
\end{proof}

We will denote this invariant by $$z_D=\rt_{\uH}^{\rho}(T).$$ We could also have started from the endpoint of a component of $T$, follow its opposite orientation and multiply the black beads from left to right, applying $\v_{\a}^{-1}$ each time we encounter a $\v_{\a}$. This would result in an element 
\begin{align}
\label{eq: z'D}
z'_D\in A_{\d_1}\ot\dots \ot A_{\d_m}
\end{align}
where $\d_i$ is the label of the starting point of $T_i$ for each $i=1,\dots,m$. These two elements are related by $\otimes_{i=1}^m\v_{[T_i]}(z'_D)=z_D$, where $[T_i]\in\pi_1(X_T)$ is the homotopy class of a path that goes from the basepoint linearly to the endpoint of $T_i$, then follows the framing of $T_i$ along the opposite orientation and finally goes back from the starting point of $T_i$ linearly to the basepoint.


\subsection{Invariants of $G$-knots}\label{subs: inv of G-knots} 
It is well-known that the isotopy class of the closure of a long knot determines that of the long knot itself. Therefore, any invariant of long knots defines a knot invariant. We now generalize this to the case of $G$-knots.
\medskip

Let $(T,\rho)$ be a long $G$-knot and $K$ the closure of $T$. We suppose $K\sb X=\R\t[-1,\infty)\t[0,1]$. It is clear that the $G$-label of the starting point of $T$ coincides with that of the endpoint (indeed, the paths $\c_{e_0},\c_{e_t}$ are homotopic, where $e_0$ (resp. $e_t$) is the first (resp. final) edge of a diagram of $T$ along its orientation). Thus, $\rho$ extends to an homomorphism $\pi_1(X_K,z)\to G$ that we still denote by $\rho$. We say that $(K,\rho)$ is the {\em $G$-closure} of $(T,\rho)$.

\medskip

In what follows, $(K,\rho)$ is a (framed, oriented) $G$-knot and $(T,\rho)$ is a long $G$-knot whose $G$-closure is isotopic to $(K,\rho)$. We denote by $\c$ the $G$-label of the endpoint of $T$.

\begin{lemma}
\label{lemma: closure of $G$-tangle}
Let $(T',\rho')$ be another long $G$-knot whose closure is isotopic to $(K,\rho)$. Then, there is an element $\b\in G$ in the image of $\rho'$ such that $(T,\rho)$ is isotopic to $(T',\rho'_{\b})$ as a long $G$-knot, where $\rho'_{\b}(\d)=\b\rho'(\d)\b^{-1}$ for all $\d\in\pi_1(X_{T'})$. 
\end{lemma}
\begin{proof}
Let $(K',\rho')$ be the closure $(T',\rho')$. For simplicity, let's suppose that the origin lies in the closure-strand of both $K$ and $K'$. Suppose there is an isotopy $d_t:X\to X$ such that $d_1(K)=K'$ and $\rho'\circ (d_1)_*=\rho$ over $\pi_1(X_K,z)$ where $z\in\pvX$ is the basepoint. After an isotopy of the identity of $S^3$ we can arrange for $d_1(0)=0$ (note that $(d_1)_*$ is unchanged). Then we can take $d_t-d_t(0)$ and modify it near $\p X$ so that it becomes an isotopy $f_t$ of $G$-knots such that $f_t(0)=0$ for all $t$. Thus, it is an isotopy from $T_0=K\sm K\cap B$ to $T'_0=K'\sm K'\cap B$ in $S^3\sm B$ where $B$ is a small open ball neighborhood of $0$. However, if we identify $S^3\sm B\cong \D^2\t[0,1]$, the basepoint $z$ becomes a point in the interior of $\D^2\t[0,1]$. If $z_0$ is point in the boundary of $\D^2\t[0,1]$, then we can identify $\pi_1(X_T,z)\cong\pi_1(X_T,z_0)$ only up to conjugation. Thus $(T_0,\rho)$ is equivalent to a conjugate of $(T,\rho)$, and similarly for $(T'_0,\rho')$, which proves the lemma.

\end{proof}

Therefore any invariant of $(T,\rho)$ that depends on $\rho$ only up to conjugation is an invariant of the closure $(K,\rho)$. The following are particular instances of this.

\begin{corollary}
\label{corollary: G-knot inv abelian case}
If the image of $\rho$ is abelian, then $\rt_{\uH}^{\rho}(T)\in \Hc$ is an invariant of $(K,\rho)$.
\end{corollary}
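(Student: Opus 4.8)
The plan is to read this off from Lemma~\ref{lemma: closure of $G$-tangle} together with the isotopy invariance of $z_D$ established just above. Let $(T,\rho)$ and $(T',\rho')$ be $(1,1)$-$G$-tangles whose $G$-closures are isotopic $G$-knots; both then present one and the same $G$-knot $(K,\wh{\rho})$. Since $\rho$ is abelian, so is the holonomy $\wh{\rho}$ of this $G$-knot, and hence so is $\rho'$ (being a restriction of $\wh{\rho}$, up to the isotopy); in particular $\operatorname{im}(\rho')$ is an abelian subgroup of $G$.

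Next I would apply Lemma~\ref{lemma: closure of $G$-tangle}: there is an element $g$ in the image of $\rho'$ such that $(T,\rho)$ is isotopic, as a $G$-tangle, to $(T',\rho'_g)$, where $\rho'_g$ denotes $\rho'$ conjugated by $g$. This is the only point at which the hypothesis enters: since $g$ lies in the abelian subgroup $\operatorname{im}(\rho')$, it commutes with $\rho'(x)$ for all $x$, so $\rho'_g=\rho'$. Thus $(T,\rho)$ and $(T',\rho')$ are already isotopic $G$-tangles, and the $G$-tangle isotopy invariance of $z_D$ shown above gives $\rt_{\uH}^{\rho}(T)=\rt_{\uH}^{\rho'}(T')$.

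It remains to verify that the two sides lie in the \emph{same} target object, so that the equality is meaningful: the target of a $(1,1)$-$G$-tangle is a pair $(\c,\e)$ where $\c\in G$ is the $\rho$-image of the canonical loop encircling the outgoing strand, and under a $G$-knot isotopy of the closures this loop corresponds to the analogous loop for the other closure; since $\rho$ is abelian, its value $\c$ on this homologically canonical meridian does not depend on the chosen $(1,1)$-presentation, so both invariants lie in $\Hc$ for one and the same $\c\in G$. This shows that $\rt_{\uH}^{\rho}(T)\in\Hc$ depends only on the $G$-knot $(K,\rho)$. I do not anticipate any genuine obstacle here: essentially all the content is packaged in Lemma~\ref{lemma: closure of $G$-tangle}, and the abelian hypothesis is exactly what makes the conjugation ambiguity appearing there disappear.
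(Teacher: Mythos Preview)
Your proof is correct and follows essentially the same approach as the paper: apply Lemma~\ref{lemma: closure of $G$-tangle} and observe that, since $g$ lies in the abelian image $\operatorname{im}(\rho')$, the conjugation $\rho'_g$ equals $\rho'$, so $(T,\rho)$ and $(T',\rho')$ are already $G$-isotopic. The paper's proof is a one-line compression of exactly this argument; your additional remark that both invariants land in the same $\Hc$ is a helpful clarification the paper leaves implicit.
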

\begin{proof}
If $\rho$ is abelian, the above lemma implies that $(T,\rho)$ is $G$-isotopic to $(T',\rho')$ so any invariant of $(T,\rho)$ is an invariant of $(K,\rho)$.
\end{proof}

\begin{corollary}\label{corollary: G-knot invariant after ev}Let $\uHH$ be a ribbon Hopf $G$-coalgebra and let $\{f_{\a}:A_{\a}\to\kk\}_{\a\in G}$ be a family of linear functionals with the property that $f_{\b\a\b^{-1}}\circ\v_{\b}=f_{\a}$ for all $\a,\b\in G$. Then the scalar $$f_{\c}(\rt_{\uH}^{\rho}(T))\in\kk$$ is an invariant of $(K,\rho)$. \end{corollary}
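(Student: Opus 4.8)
The plan is to reduce the statement to Lemma \ref{lemma: closure of $G$-tangle} together with the hypothesis on the functionals $\{f_\a\}$. First I would recall the situation: given a framed $G$-knot $(K,\rho)$, a presentation of it as the closure of a $(1,1)$-$G$-tangle $(T,\rho)$ always exists, and by the proposition in Subsection \ref{subs: invs of G-tangles} the element $\rt_{\uH}^{\rho}(T)\in A_{\c}$ is a well-defined isotopy invariant of the $G$-tangle $(T,\rho)$, where $\c=\rho(\c_e)\in G$ is the label of the distinguished edge $e$ at the top (equivalently bottom) of the tangle. So the only thing to check is independence of the choice of $(1,1)$-tangle presentation.

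The key step is the following: suppose $(T,\rho)$ and $(T',\rho')$ are two $(1,1)$-$G$-tangles whose closures are isotopic as $G$-knots to $(K,\rho)$. By Lemma \ref{lemma: closure of $G$-tangle}, there is $g$ in the image of $\rho'$ with $(T,\rho)$ isotopic, as a $G$-tangle, to $(T',\rho'_g)$, where $\rho'_g$ denotes the conjugate homomorphism $x\mapsto g\,\rho'(x)\,g^{-1}$. Conjugating $\rho'$ by $g$ multiplies every edge label $\b=\rho'(\c_e)$ by conjugation, $\b\mapsto g\b g^{-1}$; in particular the distinguished label becomes $\c'':=g\,\c'\,g^{-1}$, where $\c'$ is the label attached to $T'$. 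I would then verify, directly from the construction of the beads, that the invariant transforms by the crossing isomorphism: namely
\[
\rt_{\uH}^{\rho'_g}(T')=\v_{g}\bigl(\rt_{\uH}^{\rho'}(T')\bigr)\in A_{\c''},
\]
since replacing $\rho'$ by its $g$-conjugate replaces every $R$-matrix bead $R_{\a,\b}$ by $R_{g\a g^{-1},g\b g^{-1}}$, every pivot $g_\a$ by $g_{g\a g^{-1}}$, and every $\v_\a$ by $\v_{g\a g^{-1}}$, and the defining axioms (1) of a crossing together with $\v_g$-invariance of $R$ and the properties $\v_\b(v_\a)=v_{\b\a\b^{-1}}$, $\v_\b(g_\a)=g_{\b\a\b^{-1}}$ from Subsection \ref{subs: Hopf G-coalgebras} say exactly that applying $\v_g$ to the whole product of beads reproduces the product of the $g$-conjugated beads. (This is a naturality statement: conjugating the homomorphism is the same as acting by $\v_g$ on the target.) Then, using the hypothesis $f_{g\c'g^{-1}}\circ\v_g=f_{\c'}$, we get
\[
f_{\c''}\bigl(\rt_{\uH}^{\rho'}(T')\bigr)
=f_{g\c'g^{-1}}\bigl(\v_g(\rt_{\uH}^{\rho'}(T')\bigr))
=f_{\c'}\bigl(\rt_{\uH}^{\rho'}(T')\bigr),
\]
wait---more precisely $f_{\c''}(\rt_{\uH}^{\rho'_g}(T'))=f_{\c'}(\rt_{\uH}^{\rho'}(T'))$, so the scalar computed from $(T',\rho'_g)$ equals the one computed from $(T',\rho')$. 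Combining this with the $G$-tangle isotopy $(T,\rho)\simeq(T',\rho'_g)$ and isotopy-invariance of $\rt_{\uH}^{\rho}$ gives $f_{\c}(\rt_{\uH}^{\rho}(T))=f_{\c'}(\rt_{\uH}^{\rho'}(T'))$, proving the scalar depends only on $(K,\rho)$.

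The main obstacle I anticipate is verifying cleanly the transformation rule $\rt_{\uH}^{\rho'_g}(T')=\v_g(\rt_{\uH}^{\rho'}(T'))$; while morally it is just the functoriality of the $G$-action $F_g$ on the category of representations (conjugating the $\pi_1$-representation corresponds to applying the autoequivalence $F_g$, i.e. the algebra map $\v_g$), one must check it at the level of beads crossing white dots, keeping track of the order of multiplication and of which $\v_\a$'s are applied along the strand. Once this bookkeeping is done, everything else is a formal consequence of Lemma \ref{lemma: closure of $G$-tangle} and the hypothesis on $\{f_\a\}$. I would remark that the abelian case (Corollary \ref{corollary: G-knot inv abelian case}) is subsumed: when $\rho$ is abelian the label $\c$ is unchanged under conjugation and one may simply take $f_\a=\id$.
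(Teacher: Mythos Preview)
Your proposal is correct and follows essentially the same approach as the paper: reduce to Lemma \ref{lemma: closure of $G$-tangle}, use the transformation rule $\rt_{\uH}^{\rho_g}(T)=\v_g(\rt_{\uH}^{\rho}(T))$, and then apply the hypothesis $f_{g\c g^{-1}}\circ\v_g=f_\c$. The paper simply asserts the transformation rule in one line (it follows from $\v$-invariance of $R$, $v$, and hence of the pivot $g_\a$, together with the crossing axiom $\v_\a\v_\b=\v_{\a\b}$), so the ``obstacle'' you anticipate is not really one; your sketch of why it holds via the bead description is already sufficient, and you should drop the self-correcting ``wait'' and state the chain of equalities cleanly.
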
\begin{proof}By the above lemma, any other long $G$-knot whose closure is $(K,\rho)$ is isotopic to $(T,\rho_{\b})$ for some $\b\in G$. The invariant of $(T,\rho_{\b})$ is $\rt_{\uH}^{\rho_{\b}}(T)=\v_{\b}(\rt_{\uH}^{\rho}(T))\in A_{\b \c\b^{-1}}$. It follows that $$f_{\c}(\rt_{\uH}^{\rho}(T))=f_{\b\c\b^{-1}}(\rt_{\uH}^{\rho_{\b}}(T))$$ so this is an invariant of $(K,\rho).$\end{proof}




\def\Zp{Z'}

Now, suppose we want to get invariants of unframed $G$-knots. Let $(T,\rho)$ be a framed long $G$-knot whose closure, after forgetting the framing, is an unframed $G$-knot $(K,\rho)$. Let $\rt_{\uH}^{\rho}(T)\in\Hc$ be the invariant of $(T,\rho)$, and let
\begin{align*}
\Zp_{\uH}(T,\rho)= \rt_{\uH}^{\rho}(T)\cdot v_{\c}^{-w(T)}=v_{\c}^{-w(T)}\cdot \v_{\c}^{-w(T)}( \rt_{\uH}^{\rho}(T))
\end{align*}
where the product is in $\Hc$ and $w(T)$ is the writhe of $T$. Then $\Zp_{\uH}(T,\rho)$ is an invariant of the underlying unframed long $G$-knot. Since the ribbon element of $\uH$ is $G$-invariant, it follows that $\v_{\b}(\Zp_{\uH}(T,\rho))=\Zp_{\uH}(T,\rho_{\b})$. Thus, if $\{f_{\a}\}_{\a\in G}$ is as in the above corollary, then 
\begin{align}
\label{eq: unframed inv general}
f_{\c}(\Zp_{\uH}(T,\rho))
\end{align}
\noindent is an invariant of the unframed $G$-knot $(K,\rho).$ 

\subsection{Invariants of $G$-knots from twisted Drinfeld doubles}\label{subs: invs from TDDs} Now let $H$ be a finite dimensional Hopf algebra and $G\sb \Aut(H)$ be a subgroup such that $\uDH|_G$ is $G$-ribbon. Let $\{v_{\a}\}_{\a\in G}$ be a ribbon structure determined by a triple $(\b,b,p)$ as in Proposition \ref{prop: ribbon of TDD}. From now on, we denote $p(\a)=\rH(\a)^{\frac{1}{2}}$. Note that since $D(H)_{\a}=H^*\ot H$ as vector spaces, the functional $\e_{D(H)}(f\ot h)=f(1)\e(h)$ is defined over $D(H)_{\a}$ for every $\a$ (though it is not an algebra morphism if $\a\neq \id_H$).
\medskip

 \begin{corollary}
 \label{cor: invariant PH}
Let $(K,\rho)$ be an (unframed) oriented $G$-knot. Then
 \begin{align*}
P_H^{\rho}(K)\eq\rH(\rho(m))^{\frac{1}{2}w(T)}\e_{D(H)}(\rt_{\uDH}^{\rho}(T))\in\kk
\end{align*}
is an invariant of $(K,\rho)$. Here $(T,\rho)$ is a framed, oriented long $G$-knot whose closure is $(K,\rho)$, $w(T)$ is the writhe of $T$ and $m$ is an oriented meridian of $K$.
 \end{corollary}
 \begin{proof}
 Setting $f_{\a}=\e_{D(H)}$ for every $\a$, one easily sees that $\{f_{\a}\}$ satisfies the hypothesis of Corollary \ref{corollary: G-knot invariant after ev}. By (\ref{eq: unframed inv general}), 
 \begin{align*}
 \e_{D(H)}(v_{\c}^{-w(T)}\cdot \v_{\c}^{-w(T)}(Z_{\uDH}^{\rho}(T)))
 \end{align*}
 is an invariant of $(K,\rho)$, where $\c$ is the label of the endpoint of $T$. We now show that this coincides with $P_H^{\rho}(K)$. Since $v_{\c}=\rH(\c)^{-\frac{1}{2}}\b\ot b\cdot u_{\c}^{-1}$, we only need to show that $\e_{D(H)}(\b\ot b\cdot x)=\e_{D(H)}(x)$ and $\e_{D(H)}(u_{\c}^{-1}x)=\e_{D(H)}(x)$ for all $x\in D(H)_{\c}$. Recall that $u_{\c}^{-1}=h^i\ot S^2(\c(h_i))$. If $x=f\ot h$, we can compute $\e_{D(H)}(u_{\c}^{-1}x)$ as follows:
 \begin{figure}[H]
 
 \psscalebox{1.0 1.0} 
{
\begin{pspicture}(0,-1.66)(12.901257,1.66)
\psline[linecolor=black, linewidth=0.026, arrowsize=0.05291667cm 2.0,arrowlength=1.4,arrowinset=0.0]{->}(2.2900622,1.04)(2.5900621,1.04)
\rput[bl](2.7500622,0.94){$\Delta$}
\psline[linecolor=black, linewidth=0.026, arrowsize=0.05291667cm 2.0,arrowlength=1.4,arrowinset=0.0]{->}(3.1900623,0.84)(3.4900622,0.64)
\rput[bl](2.7300622,-1.01){$m$}
\psline[linecolor=black, linewidth=0.026, arrowsize=0.05291667cm 2.0,arrowlength=1.4,arrowinset=0.0]{->}(2.5900621,-0.96)(2.1900623,-0.96)
\rput[bl](3.6400623,0.39){$S^{-1}$}
\psline[linecolor=black, linewidth=0.026, arrowsize=0.05291667cm 2.0,arrowlength=1.4,arrowinset=0.0]{->}(3.4900622,-0.56)(3.1900623,-0.76)
\psbezier[linecolor=black, linewidth=0.026, arrowsize=0.05291667cm 2.0,arrowlength=1.4,arrowinset=0.0]{->}(3.1900623,1.24)(5.490062,1.94)(5.490062,-1.86)(3.1900623,-1.16)
\rput[bl](5.7900624,-0.31){$m$}
\rput[bl](5.7500625,0.24){$\Delta$}
\psline[linecolor=black, linewidth=0.026, arrowsize=0.05291667cm 2.0,arrowlength=1.4,arrowinset=0.0]{->}(6.690062,0.34)(6.2300625,0.34)
\psline[linecolor=black, linewidth=0.026, arrowsize=0.05291667cm 2.0,arrowlength=1.4,arrowinset=0.0]{->}(6.2900624,-0.26)(6.690062,-0.26)
\psbezier[linecolor=black, linewidth=0.026, arrowsize=0.05291667cm 2.0,arrowlength=1.4,arrowinset=0.0]{->}(5.690062,0.54)(5.2900624,1.14)(5.2900624,1.64)(1.2900623,1.64)
\psbezier[linecolor=black, linewidth=0.026, arrowsize=0.05291667cm 2.0,arrowlength=1.4,arrowinset=0.0]{<-}(5.690062,-0.46)(5.2650623,-1.0736364)(4.740062,-1.56)(2.1900623,-1.56)
\rput[bl](3.6400623,-0.41){${\gamma}^{-1}$}
\psline[linecolor=black, linewidth=0.026, arrowsize=0.05291667cm 2.0,arrowlength=1.4,arrowinset=0.0]{->}(3.8900623,0.24)(3.8900623,-0.06)
\psbezier[linecolor=black, linewidth=0.026, arrowsize=0.05291667cm 2.0,arrowlength=1.4,arrowinset=0.0]{->}(3.1900623,1.04)(4.7900624,1.04)(4.990062,-0.26)(5.590062,-0.26)
\psbezier[linecolor=black, linewidth=0.026, arrowsize=0.05291667cm 2.0,arrowlength=1.4,arrowinset=0.0]{<-}(3.1900623,-0.96)(4.7900624,-0.96)(4.990062,0.34)(5.590062,0.34)
\psline[linecolor=black, linewidth=0.026, arrowsize=0.05291667cm 2.0,arrowlength=1.4,arrowinset=0.0]{->}(1.3900622,1.04)(1.6900623,1.04)
\psbezier[linecolor=black, linewidth=0.026, arrowsize=0.05291667cm 2.0,arrowlength=1.4,arrowinset=0.0]{->}(1.3900622,1.64)(-0.20993775,1.64)(-0.20993775,1.04)(0.59006226,1.04)
\rput[bl](1.8900622,0.89){$\gamma$}
\rput[bl](6.940062,-0.36){$\epsilon$}
\rput[bl](6.940062,0.24){$1$}
\rput[bl](7.6400623,-0.01){=}
\rput[bl](1.7400622,-1.11){$f$}
\rput[bl](1.7900623,-1.66){$h$}
\rput[bl](0.8233956,0.94){$S^2$}
\psline[linecolor=black, linewidth=0.026, arrowsize=0.05291667cm 2.0,arrowlength=1.4,arrowinset=0.0]{->}(10.6900625,1.04)(10.990063,1.04)
\rput[bl](11.150063,0.94){$\Delta$}
\psline[linecolor=black, linewidth=0.026, arrowsize=0.05291667cm 2.0,arrowlength=1.4,arrowinset=0.0]{->}(11.590062,0.84)(11.890062,0.64)
\rput[bl](11.130062,-1.01){$m$}
\psline[linecolor=black, linewidth=0.026, arrowsize=0.05291667cm 2.0,arrowlength=1.4,arrowinset=0.0]{->}(10.990063,-0.96)(10.590062,-0.96)
\rput[bl](12.040062,0.39){$S^{-1}$}
\psline[linecolor=black, linewidth=0.026, arrowsize=0.05291667cm 2.0,arrowlength=1.4,arrowinset=0.0]{->}(11.890062,-0.56)(11.590062,-0.76)
\psbezier[linecolor=black, linewidth=0.026, arrowsize=0.05291667cm 2.0,arrowlength=1.4,arrowinset=0.0]{->}(11.590062,1.24)(13.290062,1.94)(13.290062,-1.86)(11.590062,-1.16)
\psline[linecolor=black, linewidth=0.026, arrowsize=0.05291667cm 2.0,arrowlength=1.4,arrowinset=0.0]{->}(10.590062,-1.56)(10.990063,-1.56)
\rput[bl](12.040062,-0.41){${\gamma}^{-1}$}
\psline[linecolor=black, linewidth=0.026, arrowsize=0.05291667cm 2.0,arrowlength=1.4,arrowinset=0.0]{->}(12.290062,0.24)(12.290062,-0.06)
\psline[linecolor=black, linewidth=0.026, arrowsize=0.05291667cm 2.0,arrowlength=1.4,arrowinset=0.0]{->}(9.790062,1.04)(10.090062,1.04)
\rput[bl](10.290062,0.89){$\gamma$}
\rput[bl](11.240063,-1.66){$\epsilon$}
\rput[bl](10.140062,-1.11){$f$}
\rput[bl](10.1900625,-1.66){$h$}
\rput[bl](9.223395,0.94){$S^2$}
\psline[linecolor=black, linewidth=0.026, arrowsize=0.05291667cm 2.0,arrowlength=1.4,arrowinset=0.0]{->}(8.6900625,1.04)(8.990063,1.04)
\rput[bl](8.283396,0.96){1}
\end{pspicture}
}

 \end{figure}

Using that $\c(S^2(1))=1$ it is easily seen the right hand side is $\e_{D(H)}(x)$. The proof that $\e_{D(H)}(\b\ot b\cdot x)=\e_{D(H)}(x)$ is similar (this requires that $b$ is $G$-invariant). 

 \end{proof}

\subsection{Lifting to polynomials}\label{subs: invs in Z-GRADED case} Suppose in addition to the above that $H$ is $\Z$-graded. 
We now show that the invariant $P_H^{\rho}$ of Corollary \ref{cor: invariant PH} lifts to a polynomial invariant of $K$. This follows the ideas of \cite{LN:twisted} (which itself mimics an idea from Reidemeister torsion theory), but now $H$ is a possibly non-involutory Hopf algebra.
\medskip

Let $H'=H\ot_{\kk}\kk[t^{\pm \frac{1}{2}}]$ where $t$ is a variable, then $H'$ is a $\Z$-graded Hopf algebra over $\kk[t^{\pm \frac{1}{2}}]$ and $H\sb H'$ as a $\kk$-linear Hopf subalgebra. We will identify $D(H')$ with $D(H)[t^{\pm \frac{1}{2}}]$. For any $\a\in \Aut(H)$ and $n\in\Z$ we can define a $\kk[t^{\pm 1}]$-linear Hopf automorphism $\a\ot n\in\Aut(H')$ by
\begin{align*}
(\a\ot n)(x)\eq t^{n|x|}\a(x)
\end{align*}
for any homogeneous element $x\in H$. This defines an homomorphism $$\theta:\Aut(H)\t\Z\to \Aut(H'), (\a,n)\mapsto  \a\ot n.$$ If $G\sb\Aut(H)$ is a subgroup, we let $G'$ be the image of $G\t\Z$ under this homomorphism. Note that, if $\La_l$ is a (nonzero) left cointegral of $H$ (and also of $H'$), then $$\a\ot n(\La_l)=t^{n|\La_l|}\rH(\a)\La_l$$
so that $r_{H'}(\a\ot n)=t^{n|\La_l|}\rH(\a)$. Thus, if $\rH$ has a square root over $G$, then $r_{H'}$ has a square root over $G'$, namely, $$r_{H'}(\a\ot n)^{\frac{1}{2}}=r_H(\a)^{\frac{1}{2}}\cdot t^{\frac{n|\La_l|}{2}}$$ so $\underline{D(H')}$ has an induced ribbon $G'$-coalgebra structure by Proposition \ref{prop: ribbon of TDD}.
\medskip

Now let $(K,\rho)$ be an oriented, unframed $G$-knot and let $(T,\rho)$ be a framed long $G$-knot with closure $(K,\rho)$ as unframed $G$-knots. Let $h:\pi_1(X_T)\to \Z$ be the map sending an oriented meridian of $T$ to 1. We will denote $\rhoc=\theta\circ (\rho\t h):\pi_1(X_T)\to G'$, that is,
\begin{align*}
\rhoc(\d)=\rho(\d)\ot h(\d)
\end{align*}
for $\d\in\pi_1(X_T)$. Since $\underline{D(H')}$ is $G'$-ribbon, we get an invariant $$\rt_{\underline{D(H')}}^{\rhoc}(T)\in D(H')=D(H)[t^{\pm \frac{1}{2}}]$$
\medskip
which we will denote simply as $\rt_{\underline{D(H)}}^{\rhoc}(T)$. By Corollary \ref{cor: invariant PH}
\begin{align}
\label{eq: PH polynomial}
P_H^{\rho}(K,t)\eq\rH(\rho(m))^{\frac{1}{2}w(T)}t^{\frac{|\La_l|w(T)}{2}}\e_{D(H')}(\rt_{\uDH}^{\rhoc}(T))\in \kk[t^{\pm 1}]
\end{align}
is a polynomial invariant of $(K,\rho)$, where $m$ is an oriented meridian of $K$. 
\medskip

In principle, the above belongs to $\kk[t^{\pm \frac{1}{2}}]$ but one can show that $P_H^{\rho}(K,t)\in \kk[t^{\pm 1}]$ as follows: first note that the fractional powers of $t$ come from the beads of $\rt_{\uDH}^{\rhoc}(T)$ at the caps and cups (because of the formula for the pivot of Proposition \ref{prop: ribbon of TDD}) and the normalization factor above. The caps/cups contribute a $t^{-\frac{r|\La_l|}{2}}$ where $r$ is the rotation number of the diagram, so the total fractional power is $t^{\frac{|\La_l|(w(T)-r)}{2}}$. But it is easy to see that for a long knot $w(T)-r$ is even: the parity of $w(T)-r$ is an invariant of framed tangles and it is unchanged under crossing changes, thus it coincides with the parity of the trivial tangle which is zero. 
\medskip

Note that for trivial $\rho$, $Z^h_{\uDH}(T)$ is an invariant of the knot $K$ (Corollary \ref{corollary: G-knot inv abelian case}). Thus, the whole universal invariant of $K$ can be ``deformed" to a polynomial invariant, provided $H$ is $\Z$-graded.
\medskip

\section{Twisted Kuperberg invariants via twisted Drinfeld doubles}\label{section: Fox calculus from TDD}


In this section we state and prove our main theorem (Theorem \ref{Thm: main theorem}). We begin by recalling the construction of a (sutured) Heegaard diagram from a bridge presentation of a knot. In Subsection \ref{subs: Tensors from HD's} we briefly recall the (dual of the) construction of \cite{LN:twisted}. Our main theorem is shown in Subsection \ref{subs: Main thm}, along with its corollary about twisted Reidemeister torsion.
\medskip

In all that follows, we let $H$ be a finite dimensional involutory Hopf algebra over a field $\kk$ (in $\Vect$ or $\SVect$) with a two-sided integral $\la$ and a two-sided cointegral $\La$ such that $\la(\La)=1$. We also let $K\sb S^3$ be a knot and $T$ a long knot whose closure is $K$.

\subsection{From planar diagrams to Heegaard diagrams}\label{subs: HDs} 
\def\S{\Sigma}\def\aa{\boldsymbol{\a}}\def\bb{\boldsymbol{\b}}
Let $D$ be an oriented planar diagram of the (framed, oriented) long knot $T$. In all that follows we suppose $D$ is oriented from $\R^2\t\{0\}$ to $\R^2\t\{1\}$ and that at each crossing both strands are oriented upwards as in Subsection \ref{subs: invs of G-tangles}. For simplicity, we will suppose the last crossing of $D$ (when following its orientation) is an underpass. Suppose $D$ has $g$ crossings. Then $D$ determines a bridge presentation of the knot, where the underarcs correspond to the underpasses of the crossings and the overarcs correspond to the edges of the diagram. We will suppose the underarcs are in the plane $\R^2\t\{0\}$ and the overarcs lie above it. We will denote by $a_i$ (resp. $b_i$) the underarcs (resp. overarcs), numbered so that when following the opposite orientation of $D$ we encounter $a_0,b_1,a_1,b_2,\dots,a_{g-1},b_{g}$ respectively, see Figure \ref{fig: Kuperberg marked HD TREFOIL} (left). We denote by $b_{\pi(i)}$ the overarc above $a_i$. 

\medskip
A bridge presentation determines a Heegaard diagram of the knot complement (as a sutured manifold with two sutures in its boundary). The diagram consists of the following: consider the sphere $\R^2\t \{0\}\cup\{\infty\}$ over which the underarcs lie. For each overarc $b_i$ with $1\leq i\leq g-1$ attach a 2-dimensional one-handle to $\R^2\t \{0\}$ along the feet of $b_i$. To do all this inside $\R^3$, we may think of this handle as ``following $b_i$" above the plane $\R^2\t\{0\}$. Now, delete two disks to the surface obtained, one at the endpoint of $b_g$ and one at the endpoint of $a_0$. Let $\S$ be the resulting oriented surface with boundary. For each $1\leq i\leq g-1$ let $\a_i\sb\S$ be the boundary curve of a disk neighborhood of $a_i$ that contains the feet of the 1-handles at the endpoints of $a_i$. We will assume $\a_i$ is oriented as the negative boundary of this disk. For each $1\leq i\leq g-1$ let $\b_i\sb\S$ be a circle which is the union of two arcs: the arc $b_i$ and an arc $b'_i$ which is parallel to $b_i$ but runs through the corresponding 1-handle. We will assume that $\b_i$ is oriented by extending the orientation of the arc $b_i$. Then $(\S,\aa,\bb)$ is a sutured Heegaard diagram of $S^3\sm K$, that is, the manifold obtained from $\S\t[0,1]$ by attaching 2-handles along $\a_i\t\{0\}$ and $\b_j\t \{1\}$ for each $i,j$ is homeomorphic to $S^3\sm K$, and the boundary $\p\S\t \{\frac{1}{2}\}$ correspond to two meridians in the boundary of $S^3\sm K$. We will also consider the arc $\b_g$ in $\S\sm\bb$ that joins the two punctures of $\S$ that is obtained by extending the (oriented) arc $b_g$ to the top puncture of the surface and we denote $\bb^{e}=\bb\cup\{\b_g\}$. We will also suppose that for each $i=1,\dots,g-1$, $\a_i$ and $\b_i$ have a basepoint lying right before (with respect to the orientation of $\a_i,\b_i$) the ``obvious" intersection point between $\a_i$ and $\b_i$ (which comes from the intersection $a_i\cap b_i$ at their endpoints). See Figure \ref{fig: Kuperberg marked HD TREFOIL} (right). Thus, $(\S,\aa,\bb^e)$ is an oriented, based, extended Heegaard diagram as in \cite{LN:twisted}.

\begin{figure}[h]
\centering
\psscalebox{0.8 0.8} 
{
\begin{pspicture}(0,-4.3610287)(13.27,4.3610287)
\definecolor{colour0}{rgb}{0.0,0.8,0.2}
\definecolor{colour1}{rgb}{1.0,0.6,0.0}
\definecolor{colour2}{rgb}{0.2,0.0,1.0}
\rput{-20.0}(0.20675054,2.3349414){\pscircle[linecolor=black, linewidth=0.026, arrowsize=0.05291667cm 2.0,arrowlength=1.4,arrowinset=0.0, dimen=outer](6.7244306,0.58120036){0.4}}
\rput{-20.0}(0.69296545,3.1770902){\pscircle[linecolor=black, linewidth=0.026, arrowsize=0.05291667cm 2.0,arrowlength=1.4,arrowinset=0.0, dimen=outer](9.35557,-0.37645608){0.4}}
\rput{-20.0}(0.449858,2.7560158){\psellipse[linecolor=red, linewidth=0.026, dimen=outer](8.04,0.10237213)(2.2,0.8)}
\rput{-20.0}(-0.6140978,2.479679){\pscircle[linecolor=black, linewidth=0.026, arrowsize=0.05291667cm 2.0,arrowlength=1.4,arrowinset=0.0, dimen=outer](6.7244306,2.9812002){0.4}}
\rput[bl](6.6,0.50237215){$B$}
\rput{-20.0}(-0.11527847,3.308624){\pscircle[linecolor=black, linewidth=0.026, arrowsize=0.05291667cm 2.0,arrowlength=1.4,arrowinset=0.0, dimen=outer](9.32443,1.9812003){0.4}}
\rput[bl](9.2,1.9023721){$A$}
\rput[bl](5.84,-0.29762787){$\alpha_2$}
\rput{-20.0}(1.0275989,2.1902037){\pscircle[linecolor=black, linewidth=0.026, arrowsize=0.05291667cm 2.0,arrowlength=1.4,arrowinset=0.0, dimen=outer](6.7244306,-1.8187996){0.4}}
\rput{-20.0}(1.5138137,3.0323524){\pscircle[linecolor=black, linewidth=0.026, arrowsize=0.05291667cm 2.0,arrowlength=1.4,arrowinset=0.0, dimen=outer](9.35557,-2.776456){0.4}}
\rput{-20.0}(1.2707063,2.611278){\psellipse[linecolor=red, linewidth=0.026, dimen=outer](8.04,-2.297628)(2.2,0.8)}
\rput[bl](6.6,-1.8976278){$A$}
\rput[bl](5.84,-2.6776278){$\alpha_1$}
\psline[linecolor=red, linewidth=0.026, arrowsize=0.05291667cm 2.0,arrowlength=1.4,arrowinset=0.0]{->}(8.23,-1.5276278)(8.516666,-1.6420723)
\rput[bl](9.22,-2.8776278){$B$}
\psbezier[linecolor=colour0, linewidth=0.026](6.9,0.9023721)(7.154102,1.3516679)(7.6698694,2.1644862)(8.24,2.6223721313476562)(8.810131,3.0802581)(9.456574,3.1511166)(10.06,2.922372)(10.663425,2.6936276)(11.247312,2.1314623)(11.22,1.0823722)(11.192689,0.033281952)(10.398684,-1.3163257)(9.59,-2.4776278)
\psbezier[linecolor=colour1, linewidth=0.026](9.15,-0.69405645)(8.536302,-1.6125625)(8.295588,-2.2098827)(8.19,-2.723342154366619)(8.084413,-3.2368016)(8.077017,-4.062441)(9.05,-4.2476277)(10.022984,-4.4328146)(10.860962,-3.337895)(11.32,-2.612628)(11.779037,-1.8873607)(12.387896,-0.58616096)(12.54,0.8630864)(12.692104,2.3123338)(12.059729,3.3183901)(10.97,3.8723722)(9.880272,4.426354)(8.168719,4.638123)(7.02,3.210515)
\psline[linecolor=red, linewidth=0.026, arrowsize=0.05291667cm 2.0,arrowlength=1.4,arrowinset=0.0]{->}(8.23,0.87237215)(8.516666,0.7579277)
\psline[linecolor=blue, linewidth=0.026](6.96,-1.4976279)(9.04,1.7023721)
\rput[bl](8.94,0.9223721){$\beta_1$}
\rput[bl](10.59,0.9123721){$\beta_2$}
\rput[bl](12.96,0.9023721){$\beta_3$}
\psdots[linecolor=black, dotsize=0.2](6.71,1.1523721)
\psdots[linecolor=black, dotsize=0.2](6.91,0.9223721)
\psdots[linecolor=black, dotsize=0.2](6.95,-1.5176278)
\psdots[linecolor=black, dotsize=0.2](6.8,-1.2576278)
\psline[linecolor=colour2, linewidth=0.026, arrowsize=0.05291667cm 2.0,arrowlength=1.4,arrowinset=0.0]{->}(8.01,0.12237213)(8.2,0.38237214)
\psline[linecolor=colour0, linewidth=0.026, arrowsize=0.05291667cm 2.0,arrowlength=1.4,arrowinset=0.0]{->}(7.45,1.7523721)(7.63,1.9823722)
\psline[linecolor=colour1, linewidth=0.026, arrowsize=0.05291667cm 2.0,arrowlength=1.4,arrowinset=0.0]{->}(12.3,-0.37762788)(12.22,-0.6676279)
\psline[linecolor=black, linewidth=0.026, arrowsize=0.05291667cm 2.0,arrowlength=1.4,arrowinset=0.0](0.4,-3.297628)(2.2,-0.29762787)
\psbezier[linecolor=black, linewidth=0.026, arrowsize=0.05291667cm 2.0,arrowlength=1.4,arrowinset=0.0](2.2,-0.29762787)(2.4,0.10237213)(2.0666666,0.10237213)(2.0,0.10237213134765626)
\psline[linecolor=black, linewidth=0.026, arrowsize=0.05291667cm 2.0,arrowlength=1.4,arrowinset=0.0](0.4,-1.0976279)(0.4,-1.4976279)
\psline[linecolor=black, linewidth=0.026, arrowsize=0.05291667cm 2.0,arrowlength=1.4,arrowinset=0.0](0.4,-1.4976279)(1.0,-1.8976278)
\psline[linecolor=black, linewidth=0.026, arrowsize=0.05291667cm 2.0,arrowlength=1.4,arrowinset=0.0](1.4,-2.0976279)(2.0,-2.497628)
\psline[linecolor=black, linewidth=0.026, arrowsize=0.05291667cm 2.0,arrowlength=1.4,arrowinset=0.0](1.4,0.10237213)(2.0,-0.29762787)
\psline[linecolor=black, linewidth=0.026, arrowsize=0.05291667cm 2.0,arrowlength=1.4,arrowinset=0.0](2.0,0.10237213)(2.0,-0.29762787)
\psline[linecolor=black, linewidth=0.026, arrowsize=0.05291667cm 2.0,arrowlength=1.4,arrowinset=0.0](0.4,-1.0976279)(2.2,1.9023721)
\psbezier[linecolor=black, linewidth=0.026, arrowsize=0.05291667cm 2.0,arrowlength=1.4,arrowinset=0.0](2.2,1.9023721)(2.4,2.3023722)(2.0666666,2.3023722)(2.0,2.3023721313476564)
\psline[linecolor=black, linewidth=0.026, arrowsize=0.05291667cm 2.0,arrowlength=1.4,arrowinset=0.0](1.4,2.3023722)(2.0,1.9023721)
\psline[linecolor=black, linewidth=0.026, arrowsize=0.05291667cm 2.0,arrowlength=1.4,arrowinset=0.0](2.0,2.3023722)(2.0,1.9023721)
\psline[linecolor=black, linewidth=0.026, arrowsize=0.05291667cm 2.0,arrowlength=1.4,arrowinset=0.0](2.0,-2.497628)(2.0,-2.0976279)
\psline[linecolor=black, linewidth=0.026, arrowsize=0.05291667cm 2.0,arrowlength=1.4,arrowinset=0.0](0.4,1.1023722)(0.4,0.70237213)
\psline[linecolor=black, linewidth=0.026, arrowsize=0.05291667cm 2.0,arrowlength=1.4,arrowinset=0.0](0.4,0.70237213)(1.0,0.30237213)
\psbezier[linecolor=black, linewidth=0.026, arrowsize=0.05291667cm 2.0,arrowlength=1.4,arrowinset=0.0](0.4,1.1023722)(1.6,3.3023722)(2.6,3.702372)(3.2,3.3023721313476564)(3.8,2.9023721)(3.4,-0.09762787)(2.0,-2.0976279)
\psline[linecolor=black, linewidth=0.026, arrowsize=0.05291667cm 2.0,arrowlength=1.4,arrowinset=0.0]{<-}(0.0,3.1023722)(1.0,2.502372)
\rput[bl](0.0,2.502372){$a_0$}
\rput[bl](0.2,-2.0976279){$a_1$}
\rput[bl](2.2,1.1023722){$b_1$}
\rput[bl](3.8,2.702372){$b_2$}
\rput[bl](1.0,-3.0976279){$b_3$}
\rput[bl](0.2,0.10237213){$a_2$}
\end{pspicture}
}
\caption{A diagram of a long knot $T$ whose closure is a right trefoil, drawn as a bridge presentation. On the right, the sutured Heegaard diagram associated to this diagram. The $A$-labels and $B$-labels indicate where the one-handles have to be attached. The blue and green arcs indicate the portion of the circles $\b_1,\b_2$ that lie over $\R^2\t\{0\}$. The orange arc is the arc $b_3$ extended to the top puncture. The circles without letters are the boundary components of $\S$, so $\S$ is a genus two surface with two holes.}
\label{fig: Kuperberg marked HD TREFOIL}
\end{figure}

\subsection{Fox calculus} A Heegaard diagram $(\S,\{\a_i\}_{i=1}^{g-1},\{\b_i\}_{i=1}^{g-1})$ of $S^3\sm K$ together with an arc $\b_g\sb\S\sm (\cup_{i=1}^{g-1}\b_i)$ joining the two punctures of $\S$ induces a presentation of $\pi_1(S^3\sm K)$ that has generators $\b^*_i$ for each $1\leq i\leq g$ and relators $\ov{\a}_i$ for each $i=1,\dots,g-1$. This is because $S^3\sm K$ is homeomorphic to a handlebody of genus $g$ to which we attached $g-1$ 2-cells along the $\a$ curves. The $\b^*_i$'s are the duals of the $\b_1,\dots,\b_g$ in $\S$. Thus, the upper handlebody (corresponding to $(\S,\bb)$) is a thickening of the wedge of circles $\vee_{i=1}^{g}\b_i^*$. The relator $\ov{\a}_i$ is obtained by following $\a_i$ starting from a basepoint and multiplying, from right to left, the $(\b^*_i)^{\e}$'s corresponding to the intersection points of $\a_i$. Here $\e=+1$ if $\b_i\cdot \a=+1$ in $\S$ at the given intersection point and $\e=-1$ otherwise. 

\medskip
Let $F$ be the free group on $\b^*_1,\dots,\b^*_g$. Since each appearance of a generator $\b^*_j$ in the word $\ov{\a}_i$ corresponds to an intersection point between $\a_i$ and $\b_j$, the Fox derivative $\p\ov{\a_i}/\p \b^*_j$ is a sum of words in $F$ indexed by the intersection points $\a_i\cap \b_j$. Thus, we can write 
\begin{align}
\label{eq: wx's Fox derivative}
\frac{\p \ov{\a}_i}{\p \b^*_j}=\sum_{x\in \a_i\cap \b_j}m_x w_x
\end{align}
for some $w_x\in F$ and $m_x=1$ if the intersection at $x$ is positive (that is $\b_j\cdot \a_i=+1$ at this point) and $m_x=-1$ if negative.
\medskip

Now consider the Heegaard diagram coming from a planar projection as above. Then the $\b^*_i$'s can be identified with the loops $\c_{b_i}$'s encircling the arcs $b_i$ once with linking number $-1$. To avoid cumbersome notation, we will simply denote $b_i=\b^*_i$. With the above choice of orientations and basepoints on the curves, we have $$\ov{\a_i}=b_{\pi(i)}b_{i+1}b_{\pi(i)}^{-1}b_i^{-1} \ \text{ or } \ \ov{\a}_i=b_{\pi(i)}^{-1}b_{i+1}b_{\pi(i)}b_i^{-1}$$
depending on whether the crossing at $a_i$ is positive or negative. Thus, the presentation $\pi_1(S^3\sm K)=\lb b_1,\dots,b_g \ | \ \ov{\a}_1, \dots,\ov{\a}_{g-1}\rb$ is simply the Wirtinger presentation. From now on, we think of $\p w/\p b_i$ as an element of $\Z\lb \pi_1\rb$ for each $w\in F$ and $b_i$. 
If the crossing at $a_i$ is positive, the nonzero Fox derivatives of $\ov{\a_i}$ are: 
\begin{align}
\label{eq: Fox of Wirtinger}
\frac{\p \ov{\a_i}}{\p b_{\pi(i)}}=1-b_{\pi(i)}b_{i+1}b_{\pi(i)}^{-1}, \  \frac{\p \ov{\a_i}}{\p b_{i+1}}=b_{\pi(i)}, \ \frac{\p \ov{\a_i}}{\p b_i}=-1.
\end{align}
Thus, if $x_1,x_2,x_3,x_4$ are the points of $\a_i\cap\bb^e$ as one follows the orientation of $\a_i$ starting from its basepoint, then (\ref{eq: Fox of Wirtinger}) shows that 
\begin{equation}
\label{eq: wx's positive case}
w_1=1, w_2=b_{\pi(i)}b_{i+1}b_{\pi(i)}^{-1}, w_3=b_{\pi(i)},w_4=1.
\end{equation}
 If the crossing at $a_i$ is negative, then 
\begin{equation}
\label{eq: wx's negative case}
w_1=1,w_2=b_{\pi(i)}^{-1}b_{i+1}, w_3=b_{\pi(i)}^{-1}, w_4=b_{\pi(i)}^{-1}.
\end{equation}

\subsection{Tensors from Heegaard diagrams}\label{subs: Tensors from HD's} Let $H$ be a Hopf algebra as above and let $\rho:\pi_1(S^3\sm K)\to\Aut(H)$ be an homomorphism. Given a sutured Heegaard diagram $(\S,\aa=\{\a_i\}_{i=1}^{g-1},\bb=\{\b_i\}_{i=1}^{g-1})$ of $S^3\sm K$ with an arc $\b_g$ and with orientations and basepoints as above, the construction of \cite{LN:twisted} assigns the following tensors of $H$ (this is actually the dual construction, see Remark \ref{remark: conventions on IH} below). First, to each curve $\a_i$ we associate the tensor 
\begin{figure}[H]
\centering
\begin{pspicture}(0,-1.6072613)(3.436638,1.6072613)
\psline[linecolor=black, linewidth=0.026, arrowsize=0.05291667cm 2.0,arrowlength=1.4,arrowinset=0.0]{->}(0.4,-0.049999923)(0.8,-0.049999923)
\psline[linecolor=black, linewidth=0.026, arrowsize=0.05291667cm 2.0,arrowlength=1.4,arrowinset=0.0]{->}(1.4,0.15000008)(1.8,0.5500001)
\psline[linecolor=black, linewidth=0.026, arrowsize=0.05291667cm 2.0,arrowlength=1.4,arrowinset=0.0]{->}(1.4,-0.24999993)(1.8,-0.6499999)
\rput[bl](2.16,0.110000074){$\vdots$}
\psline[linecolor=black, linewidth=0.026, arrowsize=0.05291667cm 2.0,arrowlength=1.4,arrowinset=0.0]{->}(1.4,-0.049999923)(1.9,-0.24999993)
\rput[bl](0.0,-0.14999992){$\Lambda$}
\rput[bl](1.0,-0.14999992){$\Delta$}
\rput[bl](1.7,0.70000005){$\rho(w_k^{-1})$}
\rput[bl](2.1,-0.4999999){$\rho(w_2^{-1})$}
\rput[bl](1.7,-1.0999999){$\rho(w_1^{-1})$}
\psline[linecolor=black, linewidth=0.026, arrowsize=0.05291667cm 2.0,arrowlength=1.4,arrowinset=0.0]{->}(2.5,1.2500001)(2.9,1.6500001)
\psline[linecolor=black, linewidth=0.026, arrowsize=0.05291667cm 2.0,arrowlength=1.4,arrowinset=0.0]{->}(3.0,-0.6499999)(3.5,-0.8499999)
\psline[linecolor=black, linewidth=0.026, arrowsize=0.05291667cm 2.0,arrowlength=1.4,arrowinset=0.0]{->}(2.5,-1.2499999)(2.9,-1.65)
\end{pspicture}
\end{figure}
Here, from bottom to top, the outputs correspond to the intersection points, say $x_1,\dots,x_k$, of $\a_i$ with $\bb$ as one follows its orientation starting from its basepoint and we denote $w_i=w_{x_i}$ as in (\ref{eq: wx's Fox derivative}). Note that for the Heegaard diagram coming from a diagram projection, each $\ov{\a}_i$ intersects $\bb^e=\bb\cup \{\b_g\}$ in exactly four points, but intersects $\bb$ in either two, three or four points. Thus, the above tensor associated to $\ov{\a}_i$ may have two, three or four outputs depending on whether $|\a_i\cap\bb|=2,3$ or $4$. Now, to each closed curve $\b$ we associate the tensor
\begin{figure}[H]
\centering
\begin{pspicture}(0,-0.8091925)(2.2751222,0.8091925)
\psline[linecolor=black, linewidth=0.026, arrowsize=0.05291667cm 2.0,arrowlength=1.4,arrowinset=0.0]{->}(0.20512207,-0.8)(0.8051221,-0.2)
\psline[linecolor=black, linewidth=0.026, arrowsize=0.05291667cm 2.0,arrowlength=1.4,arrowinset=0.0]{->}(0.20512207,0.8)(0.8051221,0.2)
\rput[bl](0.06512207,0.06){$\vdots$}
\psline[linecolor=black, linewidth=0.026, arrowsize=0.05291667cm 2.0,arrowlength=1.4,arrowinset=0.0]{->}(1.405122,0.0)(1.805122,0.0)
\psline[linecolor=black, linewidth=0.026, arrowsize=0.05291667cm 2.0,arrowlength=1.4,arrowinset=0.0]{->}(0.00512207,-0.3)(0.70512205,0.0)
\rput[bl](0.90512204,-0.1){$m$}
\rput[bl](2.0051222,-0.2){$\lambda$}
\end{pspicture}
\end{figure}

\def\oo{\omega}
As before, from bottom to top, the inputs of this tensor correspond to the intersection points on $\b$ as one follows its orientation. Finally, to each intersection point $x$ we associate the tensor $S^{\e_x}$ where $\e_x\in\{0,1\}$ is defined by $m_x=(-1)^{\e_x}$ and $m_x$ is the sign in (\ref{eq: wx's Fox derivative}). In this way, each intersection point of the Heegaard diagram corresponds to a unique output of an $\a$-tensor and a unique input of a $\b$-tensor. The contraction of all these tensors is a scalar $$Z_H^{\rho}(\S,\aa,\bb^e)\in\kk.$$ Under our assumptions on $H$, it is shown in \cite{LN:twisted} that the scalar $Z_H^{\rho}(\S,\aa,\bb^e)$ is an invariant of the underlying sutured 3-manifold up to multiplication by an indeterminacy of the form $(\pm 1)^{|\La|}\rH(\rho(\d))$, where $\d\in\pi_1(M)$. We will denote by $I_H^{\rho}(M,\c)$ the invariant up to this indeterminacy. Note that if $\rho:\pi_1(M)\to\Ker(\rH)$ and $|\La|=0$ there is no indeterminacy at all. These indeterminacies can be removed using $\Spin^c$ structures and homology orientations, but we won't need this.
\medskip


\begin{remark}
\label{remark: conventions on IH}
The construction of \cite{LN:twisted} is a special case of \cite{LN:Kup} where we consider the semidirect product $\kk[\Aut(H)]\rtimes H$ as a graded algebra, where the grading group is $\Aut(H)$. As explained in \cite{LN:twisted}, if one sets $H_{\a}=\{\a\cdot h \ | \ h\in H\}\sb \kk[\Aut(H)]\rtimes H$, then $\{H_{\a}\}_{\a\in\Aut(H)}$ is a Hopf group-algebra. The dual of this object is a Hopf group-coalgebra whose representation category is equivalent to $\Rep(H^*)\rtimes\Aut(H^*)$. The above conventions for $I_H^{\rho}$ differ from those of \cite{LN:twisted} in that we use this dual object with $H$ in place of $H^*$. More precisely, the construction of \cite{LN:twisted} used the dual presentation having $\a^*_i$'s as generators and $\ov{\b_i}$'s as relators. The present version is obtained by taking the diagram $(-\S,\bb,\aa)$ and using $H^*$ tensors.  In other words, the present $I_H^{\rho}$ is the $I_{H^*}^{\rho^{-t}}$ of \cite{LN:twisted}, where for each $\a\in\Aut(H)$, $\a^t\in\Aut(H^*)$ is the dual of $\a$ and $\rho^{-t}:\pi_1(M)\to\Aut(H^*)$ is defined by $\rho^{-t}(\d)=(\rho(\d)^t)^{-1}$. 
\end{remark}

\subsection{Main theorem}\label{subs: Main thm} Let $H$ be as in the beginning of this section. By the two-sided cointegral/integral condition, the double $D(H)$ is ribbon with $b=1,\b=\e$. As in Proposition \ref{prop: ribbon of TDD}, we let $G\sb\Aut(H)$ be a subgroup over which $\sqrt{\rH}$ exists, so that $\uDH|_G$ is ribbon and we can define the invariant $P_H^{\rho}(K)$ of Subsection \ref{subs: invs from TDDs}. Then, the main theorem of the present paper is the following:

\begin{theorem}
\label{Thm: main theorem}
Let $(K,\rho)$ be a $G$-knot in $S^3$. Then the Reshetikhin-Turaev invariant of $K$ from the twisted Drinfeld double of $H$ recovers the twisted Kuperberg invariant as follows:
\begin{align*}
P^{\rho}_H(K)\dot{=} I_H^{\rho}(M,\c)
\end{align*}
where $(M,\c)$ is the sutured manifold associated to the complement of the knot $K$ ($M=S^3\sm K$ and $\c$ consists of two meridians in $\p(S^3\sm K)$). Here $\dot{=}$ denotes equality up to multiplication by $(\pm 1)^{|\La|}\rH(\rho(\d))$ for some $\d\in\pi_1(S^3\sm K).$ Similarly, if $H$ is $\Z$-graded then 
\begin{align*}
P_H^{\rho}(K,t)\dot{=} I_H^{\rhoc}(M,\c)
\end{align*}
where $\dot{=}$ denotes equality up to multiplication by $(\pm 1)^{|\La|} \rH(\rho(\d))t^{k|\La|}, k\in\Z, \d\in\pi_1(S^3\sm K)$.
\end{theorem}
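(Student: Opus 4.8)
The plan is to compare the two constructions by expanding both sides into tensor-network diagrams over $H$ (or, for the left-hand side, over $D(H)=H^*\ot H$) attached to the \emph{same} combinatorial object, namely the bridge presentation / extended Heegaard diagram $(\S,\aa,\bb^e)$ coming from the diagram $D$ of $T$. The key observation is that the invariant $\rt_{\uDH}^{\rho}(T)\in D(H)$ is built from $R$-matrices $R_{\a,\b}=\sum(\e\ot h_i)\ot(h^*_i\circ\a\ot 1)$ at crossings, pivots $g_\a=\sqrt{\rH(\a)}^{-1}(\e\ot 1)$ (here $b=1,\b=\e$ since $H$ is involutory) at right cups/caps, and units at left cups/caps, all multiplied along the strand; applying $\e_{D(H)}$ at the end contracts the $H^*$-legs. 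First I would unwind what this contraction produces: the $H^*$-part of each $R$-matrix is a dual-basis element $h^*_i\circ\a$, and feeding these into $\e_{D(H)}(p\ot h)=p(1)\e(h)$ together with the coproduct/product structure of $D(H)_\a$ — recalling that the $D(H)_\a$-product twists the $H^*$-factor by $\a^{-1}S^{-1}$ applied to $a_{(3)}$ and then pairs against $a_{(1)}$ — is exactly a contraction that inserts, at each crossing, a copy of $H$ acted on by the automorphism $\a$ labelling the over/under strand, with the dual pairing $\langle h^*_i, -\rangle$ realizing a "cut" that reassembles into $\De$'s and $m$'s. This is the same bookkeeping as in the untwisted Kuperberg-vs-Hennings comparison (\`a la \cite{CC:ontwoinvariants}), now carried through the $\Aut(H)$-grading.

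Next I would identify, crossing by crossing, the resulting $H$-tensor with the Fox-calculus tensor of \S\ref{subs: Tensors from HD's}. The curve $\a_i$ in the Heegaard diagram corresponds to the underpass at crossing $a_i$; the relator $\ov{\a_i}=b_{\pi(i)}^{\pm1}b_{i+1}b_{\pi(i)}^{\mp1}b_i^{-1}$ has Fox derivatives whose words $w_1,w_2,w_3,w_4$ are precisely the partial products $\rho(w_j^{-1})$ decorating the outputs of the $\a$-tensor $\La\xrightarrow{\De}\cdots$. On the quantum side, reading the bead word along the strand and grouping the $\v_{\a}$'s (which act by $p\circ\a^{-1}\ot\a(a)$) reproduces exactly these partial holonomies, because the edge labels of $D$ via $\rho$ are the same $\c_{b_i}$'s that generate $\pi_1(X_T)$ in the Wirtinger presentation, and the $\v$-twisting composes holonomies in the same right-to-left order. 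The antipode factors $S^{\e_x}$ at the intersection points match the $S^{\pm1}$'s appearing in $R_{\a,\b}$ and $R_{\a,\b}^{-1}=(S_{D(H)}\ot\id)(R_{\a,\b})$ (note $S_{\a}(p\ot a)$ involves $S^{-1}$ on the $H^*$-leg). The $\b$-tensor $m\xrightarrow{\la}\kk$ arises because the integral $\la_\a=\La\ot\la$ of $\uDH$ is what the contraction $\e_{D(H)}$ effectively feeds into each over-strand once all the dual pairings are resolved — this is where the involutivity and two-sidedness of $\la,\La$ are essential, so that the "which integral/cointegral" choices do not obstruct the matching.

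Then I would track the scalar prefactors. Every right cup/cap contributes $g_\a^{\pm1}=\sqrt{\rH(\a)}^{\mp1}(\e\ot1)$, and by the Lemma of \S\ref{subs: invs from TDDs} applied to $v_\c$ (or directly to $g_\c=v_\c u_\c$, using that $\e_{D(H)}(u_\c\,\cdot\,)=\e_{D(H)}$ since $u=\sum S^{-1}(t_i)s^i$ contracts trivially under $\e_{D(H)}$ when $H$ is involutory), each such bead multiplies the answer by a power $\rH(\rho(g))^{\pm1/2}$ for the appropriate $g\in\pi_1$; summing over cups/caps and over the writhe/rotation contribution gives exactly the $\pm\rH(\rho(g))^{k/2}$ ambiguity, which is also precisely the indeterminacy $(-1)^p\rH(\rho(\d))$ under which $I_H^{\rho}(M,\c)$ is defined (with the sign coming from the mod-$2$ degree $p$ of $\La$ in the super case). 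In the $\Z$-graded case one replaces $H$ by $H'=H\ot\kk[t^{\pm1/2}]$ and $\rho$ by $\rhoc=\rho\ot h$; since $r_{H'}(\a\ot n)=t^{n|\La|}\rH(\a)$, the same argument runs verbatim, the cups/caps now contributing powers of $t^{|\La|/2}$ as well, and the normalization $t^{|\La|w(D)/2}$ in \eqref{eq: PH polynomial} absorbs the rotation-number discrepancy — here one uses, as in \S\ref{subs: invs in Z-GRADED case}, that $w(D)-r$ is even for a $(1,1)$-tangle closing to a knot. So the first statement is the $t=1$ specialization (equivalently $n=0$) of the second.

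\textbf{Main obstacle.} The routine-but-delicate heart is the crossing-by-crossing identification in the second paragraph: one must verify that the specific twist $(\a^{-1}S^{-1})(a_{(3)})?a_{(1)}$ in the $D(H)_\a$-product, combined with $\De_{\a\b}(p\ot a)=(p_{(2)}\ot a_{(1)})\ot(p_{(1)}\ot\a^{-1}(a_{(2)}))$ and the crossing $\v_\a$, collapses under $\e_{D(H)}$ to exactly the Fox-derivative words \eqref{eq: wx's positive case}–\eqref{eq: wx's negative case} with exactly the antipode decorations $S^{\e_x}$ — including getting the \emph{order} of multiplication and the placement of the automorphisms right, and checking both the positive- and negative-crossing cases (and the last-underpass convention used to set up the Heegaard diagram). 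This is a finite, local check, but it is where a sign or an $\a$-vs-$\a^{-1}$ error would hide; everything else (the global assembly, Van Kampen compatibility of $\rho$ with the Wirtinger generators, and the scalar bookkeeping) then follows formally from the already-established invariance of both $\rt_{\uDH}^{\rho}(T)$ and $Z_H^{\rho}(\S,\aa,\bb)$.
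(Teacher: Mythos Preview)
Your overall strategy coincides with the paper's: place the $R$-matrix and pivot beads on the diagram, multiply them along the strand in $D(H)_\a$ taking care of the $\v$-twists, apply $\e_{D(H)}$, and then match the resulting $H$-tensor network to the Kuperberg tensor $Z_H^\rho(\S,\aa,\bb)$ crossing by crossing via the Wirtinger/Fox words. Your identification of the ``main obstacle'' (the local check that the $D(H)_\a$-product twist and $\v_\a$ collapse to exactly the Fox words \eqref{eq: wx's positive case}--\eqref{eq: wx's negative case} with the correct antipodes) is exactly what the paper carries out explicitly, and your scalar bookkeeping for the pivots and the $\Z$-graded case is right.

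There is one mechanism you have misidentified, and it would trip you up in execution. You write that the $\b$-tensor $m\!\to\!\la$ arises because ``the integral $\la_\a=\La\ot\la$ of $\uDH$ is what the contraction $\e_{D(H)}$ effectively feeds into each over-strand.'' But $\e_{D(H)}=1\ot\e$ is \emph{not} the integral of $\uDH$, and the integral of $\uDH$ plays no role in the proof. What actually happens is: after applying $\e_{D(H)}$ (which kills the dangling $H^*$-leg from $A_0$ and the $H$-leg from $B_g$) and resolving the $A_i\leftrightarrow B_{\pi(i)}$ dual pairings, the remaining $H$-tensor network has one closed loop (a trace) for each overarc $b_i$ with $1\le i\le g-1$. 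The paper then applies Radford's trace formula
\[
\text{tr}(f)=\la_r\bigl(m\,(f\ot S)\,\tau\,\De(\La_l)\bigr)
\]
to each such loop, and \emph{this} is where the $\la,\La$ of $H$ enter. Involutivity and two-sidedness are used here so that the $S$ in Radford's formula matches the antipode decorations already present. With this correction your outline becomes the paper's proof.
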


\medskip
In the following proof we will use tensor network notation written from bottom to top instead of left to right. The reader may rotate the tensors below clockwise by 90 degrees to bring them into the form of Section \ref{section: Hopf algebras}. 
Recall that we are assuming that $H$ is involutory, so that $S^{-1}=S$.
\medskip

\begin{proof}

Let $D$ be a diagram of $T$ and take the associated bridge presentation $a_0,b_1,\dots,b_g$ of $K$ as in Subsection \ref{subs: HDs}.
Recall that we suppose that all the underarcs $a_i$ are oriented upwards. Place the black/white beads on the diagram as specified in Subsection \ref{subs: invs of G-tangles}. For simplicity, we will suppose first that all crossings are positive, so the black beads at the crossings come from $R_{\a,\b}$'s (and not $R_{\a,\b}^{-1}$'s). Then each $a_i$ has one black and one white bead: one is an element of $H^*$, which we will denote by $A_i$, and the other is $\v_{b_{\pi(i)}}$ where $b_{\pi(i)}$ is the overarc at that crossing. Note that by (\ref{eq: twisted R-matrix}), $A_i$ has a $b_{\pi(i)}$. On the other hand, an overarc $b_i$ may have multiple black beads (but no white bead), which are all elements of $H\sb D(H)_{b_i}$. Since we suppose $b=1,\b=\e$, the right caps and cups have a $\sqrt{\rH(\rho(m))}^{-1}\cdot 1_H$ or $\sqrt{\rH(\rho(m))}\cdot 1_H$ bead respectively (Proposition \ref{prop: ribbon of TDD}), where $m$ is an oriented meridian. For simplicity, we will suppose there is no bead at all at the caps and cups, and we multiply the resulting tensor by $\rH(\rho(m))^{-\frac{r}{2}}$ at the end of the proof, where $r$ is the clockwise rotation number of the diagram. We begin by multiplying all the beads of a given overarc $b_i$, this results in a bead $B_i\in H\sb  D(H)_{b_i}$ (which could be the unit of $D(H)_{b_i}$ if $b_i$ has no crossing under it).
We will actually compute $z'_D=\v_{[T]}^{-1}(Z_{\uDH}^{\rho}(T))$ as in (\ref{eq: z'D}). Thus, we start from the top of $T$ and successively multiply the beads $A_0,B_1,A_1,\dots, A_{g-1},B_g$ in this order, taking care of the evaluations on the crossings $\v_{b_{\pi(i)}}:D_{\b}\to D_{b_{\pi(i)}\b(b_{\pi(i)})^{-1}}$ of $\uDH$. First we take the product $A_0\cdot B_1=A_0\ot B_1$ in $D(H)_{b_1}$ (this product is simply concatenation since $A_0\in H^*$ and $B_1\in H$). Before multiplying with $A_1$, we need to slide $A_0B_1$ through $a_1$, this has the effect of evaluating this product on the (inverse of the) crossing $\v_{b_{\pi(1)}}$ so we get a new bead $\v_{b_{\pi(1)}}^{-1}(A_0\ot B_1)$. We now multiply this with $A_1\ot B_2$ inside $D(H)_{b_2}$. Using the multiplication rule for $D(H)_{b_2}$ and that $\v_b^{-1}(p\ot h)=p\circ b\ot b^{-1}(h)$ for any $p\in H^*, h\in H, b\in G$ the product $\v_{b_{\pi}(1)}^{-1}(A_0\ot B_1)\cdot (A_1\ot B_2)$ can be written as:


\begin{figure}[H]
\centering
\psscalebox{0.8 0.8} 
{
\begin{pspicture}(0,-2.75)(3.58,2.75)
\rput[bl](0.1,-2.75){$A_0$}
\rput[bl](0.9,-2.75){$B_1$}
\rput[bl](2.1,-2.75){$A_1$}
\rput[bl](3.2,-2.75){$B_2$}
\psline[linecolor=black, linewidth=0.026, arrowsize=0.05291667cm 2.0,arrowlength=1.4,arrowinset=0.0]{->}(0.3,-1.55)(0.3,-2.25)
\psline[linecolor=black, linewidth=0.026, arrowsize=0.05291667cm 2.0,arrowlength=1.4,arrowinset=0.0]{->}(2.3,-1.95)(2.3,-2.25)
\psline[linecolor=black, linewidth=0.026, arrowsize=0.05291667cm 2.0,arrowlength=1.4,arrowinset=0.0]{<-}(3.4,0.45)(3.4,-2.25)
\rput[bl](1.6,-1.25){$S$}
\rput[bl](0.0,-1.35){$b_{\pi(1)}$}
\psbezier[linecolor=black, linewidth=0.026, arrowsize=0.05291667cm 2.0,arrowlength=1.4,arrowinset=0.0]{->}(1.1,-2.25)(1.1,2.95)(2.3,2.95)(2.3,2.25)
\psline[linecolor=black, linewidth=0.026, arrowsize=0.05291667cm 2.0,arrowlength=1.4,arrowinset=0.0]{->}(0.3,-0.35)(0.3,-0.85)
\rput[bl](2.15,-1.85){$m$}
\rput[bl](2.15,0.9){$\Delta$}
\psline[linecolor=black, linewidth=0.026, arrowsize=0.05291667cm 2.0,arrowlength=1.4,arrowinset=0.0]{->}(2.3,1.55)(2.3,1.25)
\rput[bl](3.25,0.55){$m$}
\psbezier[linecolor=black, linewidth=0.026, arrowsize=0.05291667cm 2.0,arrowlength=1.4,arrowinset=0.0]{->}(2.3,0.75)(2.3,0.25)(3.0,-0.05)(3.2,0.45)
\psline[linecolor=black, linewidth=0.026, arrowsize=0.05291667cm 2.0,arrowlength=1.4,arrowinset=0.0]{->}(0.3,2.75)(0.3,0.25)
\rput[bl](2.0,1.65){$b_{\pi(1)}^{-1}$}
\psline[linecolor=black, linewidth=0.026, arrowsize=0.05291667cm 2.0,arrowlength=1.4,arrowinset=0.0]{<-}(3.4,2.75)(3.4,0.85)
\psline[linecolor=black, linewidth=0.026, arrowsize=0.05291667cm 2.0,arrowlength=1.4,arrowinset=0.0]{->}(1.9,-1.35)(2.1,-1.55)
\rput[bl](1.5,-0.4){$b_2^{-1}$}
\psbezier[linecolor=black, linewidth=0.026, arrowsize=0.05291667cm 2.0,arrowlength=1.4,arrowinset=0.0]{->}(2.5,0.75)(2.9,0.3125)(2.0,0.45)(1.7,0.05)
\psbezier[linecolor=black, linewidth=0.026, arrowsize=0.05291667cm 2.0,arrowlength=1.4,arrowinset=0.0]{->}(2.1,0.75)(1.3,0.35)(3.5,-0.05)(2.5,-1.55)
\psbezier[linecolor=black, linewidth=0.026, arrowsize=0.05291667cm 2.0,arrowlength=1.4,arrowinset=0.0]{->}(0.5,-0.15)(0.81764704,-0.4730769)(2.3,-0.4730769)(2.3,-1.55)
\psline[linecolor=black, linewidth=0.026, arrowsize=0.05291667cm 2.0,arrowlength=1.4,arrowinset=0.0]{->}(1.7,-0.55)(1.7,-0.85)
\rput[bl](0.15,-0.1){$\Delta$}
\end{pspicture}
}

\end{figure}
As before, this is evaluated over $\v_{b_{\pi(2)}}^{-1}$ and then multiplied with $A_2\ot B_3$ inside $D(H)_{b_3}$. We keep doing this until we reach $A_{g-1}\ot B_g$. Thus $z'_D$ can be written as
\begin{figure}[H]
\centering
\psscalebox{0.8 0.8} 
{
\begin{pspicture}(0,-4.6)(11.59,4.6)
\rput[bl](0.3,-4.6){$A_0$}
\rput[bl](1.1,-4.6){$B_1$}
\rput[bl](2.3,-4.6){$A_1$}
\rput[bl](3.4,-4.6){$B_2$}
\psline[linecolor=black, linewidth=0.026, arrowsize=0.05291667cm 2.0,arrowlength=1.4,arrowinset=0.0]{->}(0.5,-3.4)(0.5,-4.1)
\psline[linecolor=black, linewidth=0.026, arrowsize=0.05291667cm 2.0,arrowlength=1.4,arrowinset=0.0]{->}(2.5,-3.8)(2.5,-4.1)
\psline[linecolor=black, linewidth=0.026, arrowsize=0.05291667cm 2.0,arrowlength=1.4,arrowinset=0.0]{<-}(3.6,-1.4)(3.6,-4.1)
\rput[bl](1.8,-3.1){$S$}
\rput[bl](0.2,-3.2){$b_{\pi(1)}$}
\psbezier[linecolor=black, linewidth=0.026, arrowsize=0.05291667cm 2.0,arrowlength=1.4,arrowinset=0.0]{->}(1.3,-4.1)(1.3,1.1)(2.5,1.1)(2.5,0.4)
\psline[linecolor=black, linewidth=0.026, arrowsize=0.05291667cm 2.0,arrowlength=1.4,arrowinset=0.0]{->}(0.5,-2.2)(0.5,-2.7)
\rput[bl](2.35,-3.7){$m$}
\rput[bl](2.35,-0.95){$\Delta$}
\psline[linecolor=black, linewidth=0.026, arrowsize=0.05291667cm 2.0,arrowlength=1.4,arrowinset=0.0]{->}(2.5,-0.3)(2.5,-0.6)
\rput[bl](3.45,-1.2){$m$}
\psbezier[linecolor=black, linewidth=0.026, arrowsize=0.05291667cm 2.0,arrowlength=1.4,arrowinset=0.0]{->}(2.5,-1.1)(2.5,-1.6)(3.2,-1.9)(3.4,-1.4)
\psline[linecolor=black, linewidth=0.026, arrowsize=0.05291667cm 2.0,arrowlength=1.4,arrowinset=0.0]{->}(0.5,-1.0)(0.5,-1.5)
\rput[bl](2.2,-0.2){$b_{\pi(1)}^{-1}$}
\psline[linecolor=black, linewidth=0.026, arrowsize=0.05291667cm 2.0,arrowlength=1.4,arrowinset=0.0]{->}(2.1,-3.2)(2.3,-3.4)
\rput[bl](1.7,-2.25){$b_2^{-1}$}
\psbezier[linecolor=black, linewidth=0.026, arrowsize=0.05291667cm 2.0,arrowlength=1.4,arrowinset=0.0]{->}(2.7,-1.1)(3.1,-1.5375)(2.2,-1.4)(1.9,-1.8)
\psbezier[linecolor=black, linewidth=0.026, arrowsize=0.05291667cm 2.0,arrowlength=1.4,arrowinset=0.0]{->}(2.3,-1.1)(1.5,-1.5)(3.7,-1.9)(2.7,-3.4)
\psbezier[linecolor=black, linewidth=0.026, arrowsize=0.05291667cm 2.0,arrowlength=1.4,arrowinset=0.0]{->}(0.8,-2.1)(1.117647,-2.4230769)(2.5,-2.323077)(2.5,-3.4)
\psline[linecolor=black, linewidth=0.026, arrowsize=0.05291667cm 2.0,arrowlength=1.4,arrowinset=0.0]{->}(1.9,-2.4)(1.9,-2.7)
\rput[bl](0.35,-1.95){$\Delta$}
\rput[bl](5.2,-4.6){$A_2$}
\rput[bl](6.3,-4.6){$B_3$}
\psline[linecolor=black, linewidth=0.026, arrowsize=0.05291667cm 2.0,arrowlength=1.4,arrowinset=0.0]{->}(5.4,-3.8)(5.4,-4.1)
\psline[linecolor=black, linewidth=0.026, arrowsize=0.05291667cm 2.0,arrowlength=1.4,arrowinset=0.0]{<-}(6.5,-1.4)(6.5,-4.1)
\rput[bl](4.7,-3.1){$S$}
\psbezier[linecolor=black, linewidth=0.026, arrowsize=0.05291667cm 2.0,arrowlength=1.4,arrowinset=0.0]{->}(3.6,-0.9)(3.5,1.9)(5.4,1.1)(5.4,0.4)
\rput[bl](5.25,-3.7){$m$}
\rput[bl](5.25,-0.95){$\Delta$}
\psline[linecolor=black, linewidth=0.026, arrowsize=0.05291667cm 2.0,arrowlength=1.4,arrowinset=0.0]{->}(5.4,-0.3)(5.4,-0.6)
\rput[bl](6.35,-1.2){$m$}
\psbezier[linecolor=black, linewidth=0.026, arrowsize=0.05291667cm 2.0,arrowlength=1.4,arrowinset=0.0]{->}(5.4,-1.1)(5.4,-1.6)(6.1,-1.9)(6.3,-1.4)
\rput[bl](5.1,-0.2){$b_{\pi(2)}^{-1}$}
\psline[linecolor=black, linewidth=0.026, arrowsize=0.05291667cm 2.0,arrowlength=1.4,arrowinset=0.0]{->}(5.0,-3.2)(5.2,-3.4)
\rput[bl](4.6,-2.25){$b_3^{-1}$}
\psbezier[linecolor=black, linewidth=0.026, arrowsize=0.05291667cm 2.0,arrowlength=1.4,arrowinset=0.0]{->}(5.6,-1.1)(6.0,-1.5375)(5.1,-1.4)(4.8,-1.8)
\psbezier[linecolor=black, linewidth=0.026, arrowsize=0.05291667cm 2.0,arrowlength=1.4,arrowinset=0.0]{->}(5.2,-1.1)(4.5,-1.5)(6.6,-1.9)(5.6,-3.4)
\psline[linecolor=black, linewidth=0.026, arrowsize=0.05291667cm 2.0,arrowlength=1.4,arrowinset=0.0]{->}(4.8,-2.4)(4.8,-2.7)
\rput[bl](0.2,-0.8){$b_{\pi(2)}$}
\psline[linecolor=black, linewidth=0.026, arrowsize=0.05291667cm 2.0,arrowlength=1.4,arrowinset=0.0]{->}(0.5,0.2)(0.5,-0.3)
\psline[linecolor=black, linewidth=0.026, arrowsize=0.05291667cm 2.0,arrowlength=1.4,arrowinset=0.0]{->}(0.5,1.3)(0.5,0.9)
\rput[bl](0.35,0.45){$\Delta$}
\psbezier[linecolor=black, linewidth=0.026, arrowsize=0.05291667cm 2.0,arrowlength=1.4,arrowinset=0.0]{->}(0.8,0.3)(1.2,-0.2)(2.0,-0.3)(3.5,-0.6)(5.0,-0.9)(5.4,-1.9)(5.4,-3.4)
\psbezier[linecolor=black, linewidth=0.026, arrowsize=0.05291667cm 2.0,arrowlength=1.4,arrowinset=0.0]{->}(6.5,-0.9)(6.5,-0.8)(6.6,-0.5)(7.0,-0.3)
\rput[bl](10.0,-4.6){$A_{g-1}$}
\rput[bl](11.2,-4.6){$B_g$}
\psline[linecolor=black, linewidth=0.026, arrowsize=0.05291667cm 2.0,arrowlength=1.4,arrowinset=0.0]{->}(10.3,-3.8)(10.3,-4.1)
\psline[linecolor=black, linewidth=0.026, arrowsize=0.05291667cm 2.0,arrowlength=1.4,arrowinset=0.0]{<-}(11.4,-1.4)(11.4,-4.1)
\rput[bl](9.6,-3.1){$S$}
\rput[bl](10.15,-3.7){$m$}
\rput[bl](10.15,-0.95){$\Delta$}
\psline[linecolor=black, linewidth=0.026, arrowsize=0.05291667cm 2.0,arrowlength=1.4,arrowinset=0.0]{->}(10.3,-0.3)(10.3,-0.6)
\rput[bl](11.25,-1.2){$m$}
\psbezier[linecolor=black, linewidth=0.026, arrowsize=0.05291667cm 2.0,arrowlength=1.4,arrowinset=0.0]{->}(10.3,-1.1)(10.3,-1.6)(11.0,-1.9)(11.2,-1.4)
\rput[bl](9.9,-0.2){$b_{\pi(g-1)}^{-1}$}
\psline[linecolor=black, linewidth=0.026, arrowsize=0.05291667cm 2.0,arrowlength=1.4,arrowinset=0.0]{->}(9.9,-3.2)(10.1,-3.4)
\rput[bl](9.5,-2.25){$b_g^{-1}$}
\psbezier[linecolor=black, linewidth=0.026, arrowsize=0.05291667cm 2.0,arrowlength=1.4,arrowinset=0.0]{->}(10.5,-1.1)(10.9,-1.5375)(10.0,-1.4)(9.7,-1.8)
\psbezier[linecolor=black, linewidth=0.026, arrowsize=0.05291667cm 2.0,arrowlength=1.4,arrowinset=0.0]{->}(10.1,-1.1)(9.4,-1.5)(11.5,-1.9)(10.5,-3.4)
\psline[linecolor=black, linewidth=0.026, arrowsize=0.05291667cm 2.0,arrowlength=1.4,arrowinset=0.0]{->}(9.7,-2.4)(9.7,-2.7)
\psbezier[linecolor=black, linewidth=0.026, arrowsize=0.05291667cm 2.0,arrowlength=1.4,arrowinset=0.0]{->}(0.8,3.6)(2.2,2.3)(5.6,1.7)(7.5,0.6)(9.4,-0.5)(10.3,-1.3)(10.3,-3.4)
\psbezier[linecolor=black, linewidth=0.026, arrowsize=0.05291667cm 2.0,arrowlength=1.4,arrowinset=0.0]{->}(9.7,0.7)(9.9,0.8)(10.3,0.9)(10.3,0.4)
\psline[linecolor=black, linewidth=0.026, arrowsize=0.05291667cm 2.0,arrowlength=1.4,arrowinset=0.0]{<-}(11.4,2.5)(11.4,-0.8)
\psline[linecolor=black, linewidth=0.026, arrowsize=0.05291667cm 2.0,arrowlength=1.4,arrowinset=0.0]{->}(0.5,2.4)(0.5,2.0)
\psline[linecolor=black, linewidth=0.026, arrowsize=0.05291667cm 2.0,arrowlength=1.4,arrowinset=0.0]{->}(0.5,3.5)(0.5,3.1)
\psline[linecolor=black, linewidth=0.026, arrowsize=0.05291667cm 2.0,arrowlength=1.4,arrowinset=0.0]{->}(0.5,4.6)(0.5,4.2)
\rput[bl](0.5,1.5){$\vdots$}
\rput[bl](0.0,2.6){$b_{\pi(g-1)}$}
\rput[bl](8.0,-1.4){$\dots$}
\rput[bl](0.35,3.75){$\Delta$}
\end{pspicture}
}
\end{figure}

This product is an element of $D(H)_{b_g}=H^*\ot H$. We now evaluate this product on $\e_{D(H)}=1\ot\e$, note that $\e_{D(H)}(z'_D)=\e_{D(H)}(Z_{\uDH}^{\rho}(T))$. This has the effect of killing all the arrows coming from the leftmost $H^*$-tensor and also kills the rightmost arrow coming from $B_g$. Note that, since $A_0$ is supposed to connect to $B_{\pi(0)}$ in the above picture (because the $R$-matrix is a twisted coevaluation) when $A_0$ is killed, so does the input of $B_{\pi(0)}$ and the tensors immediately above it. Similarly, all the $A_i$ beads connected to $B_g$ disappear. The result is a tensor of the form:
\begin{figure}[H]
\centering
\psscalebox{0.8 0.8} 
{
\begin{pspicture}(0,-2.865869)(9.81,2.865869)
\rput[bl](0.0,-2.865869){$B_1$}
\rput[bl](1.2,-2.865869){$A_1$}
\rput[bl](2.3,-2.865869){$B_2$}
\psline[linecolor=black, linewidth=0.026, arrowsize=0.05291667cm 2.0,arrowlength=1.4,arrowinset=0.0]{->}(1.4,-2.065869)(1.4,-2.365869)
\psline[linecolor=black, linewidth=0.026, arrowsize=0.05291667cm 2.0,arrowlength=1.4,arrowinset=0.0]{<-}(2.5,0.33413085)(2.5,-2.365869)
\rput[bl](0.7,-1.3658692){$S$}
\psbezier[linecolor=black, linewidth=0.026, arrowsize=0.05291667cm 2.0,arrowlength=1.4,arrowinset=0.0]{->}(0.2,-2.365869)(0.2,2.8341308)(1.4,2.8341308)(1.4,2.134130859375)
\rput[bl](1.25,-1.9658692){$m$}
\rput[bl](1.25,0.7841309){$\Delta$}
\psline[linecolor=black, linewidth=0.026, arrowsize=0.05291667cm 2.0,arrowlength=1.4,arrowinset=0.0]{->}(1.4,1.4341309)(1.4,1.1341308)
\rput[bl](2.35,0.5341309){$m$}
\psbezier[linecolor=black, linewidth=0.026, arrowsize=0.05291667cm 2.0,arrowlength=1.4,arrowinset=0.0]{->}(1.4,0.63413084)(1.4,0.13413087)(2.1,-0.16586915)(2.3,0.334130859375)
\rput[bl](1.1,1.5341308){$b_{\pi(1)}^{-1}$}
\psline[linecolor=black, linewidth=0.026, arrowsize=0.05291667cm 2.0,arrowlength=1.4,arrowinset=0.0]{->}(1.0,-1.4658692)(1.2,-1.6658691)
\rput[bl](0.6,-0.51586914){$b_2^{-1}$}
\psbezier[linecolor=black, linewidth=0.026, arrowsize=0.05291667cm 2.0,arrowlength=1.4,arrowinset=0.0]{->}(1.6,0.63413084)(2.0,0.19663087)(1.1,0.33413085)(0.8,-0.065869140625)
\psbezier[linecolor=black, linewidth=0.026, arrowsize=0.05291667cm 2.0,arrowlength=1.4,arrowinset=0.0]{->}(1.2,0.63413084)(0.4,0.23413086)(2.6,-0.16586915)(1.6,-1.665869140625)
\psline[linecolor=black, linewidth=0.026, arrowsize=0.05291667cm 2.0,arrowlength=1.4,arrowinset=0.0]{->}(0.8,-0.6658691)(0.8,-0.9658691)
\rput[bl](4.1,-2.865869){$A_2$}
\rput[bl](5.2,-2.865869){$B_3$}
\psline[linecolor=black, linewidth=0.026, arrowsize=0.05291667cm 2.0,arrowlength=1.4,arrowinset=0.0]{->}(4.3,-2.065869)(4.3,-2.365869)
\psline[linecolor=black, linewidth=0.026, arrowsize=0.05291667cm 2.0,arrowlength=1.4,arrowinset=0.0]{<-}(5.4,0.33413085)(5.4,-2.365869)
\rput[bl](3.6,-1.3658692){$S$}
\psbezier[linecolor=black, linewidth=0.026, arrowsize=0.05291667cm 2.0,arrowlength=1.4,arrowinset=0.0]{->}(2.5,0.8341309)(2.4,3.634131)(4.3,2.8341308)(4.3,2.134130859375)
\rput[bl](4.15,-1.9658692){$m$}
\rput[bl](4.15,0.7841309){$\Delta$}
\psline[linecolor=black, linewidth=0.026, arrowsize=0.05291667cm 2.0,arrowlength=1.4,arrowinset=0.0]{->}(4.3,1.4341309)(4.3,1.1341308)
\rput[bl](5.25,0.5341309){$m$}
\psbezier[linecolor=black, linewidth=0.026, arrowsize=0.05291667cm 2.0,arrowlength=1.4,arrowinset=0.0]{->}(4.3,0.63413084)(4.3,0.13413087)(5.0,-0.16586915)(5.2,0.334130859375)
\rput[bl](4.0,1.5341308){$b_{\pi(2)}^{-1}$}
\psline[linecolor=black, linewidth=0.026, arrowsize=0.05291667cm 2.0,arrowlength=1.4,arrowinset=0.0]{->}(3.9,-1.4658692)(4.1,-1.6658691)
\rput[bl](3.5,-0.51586914){$b_3^{-1}$}
\psbezier[linecolor=black, linewidth=0.026, arrowsize=0.05291667cm 2.0,arrowlength=1.4,arrowinset=0.0]{->}(4.5,0.63413084)(4.9,0.19663087)(4.0,0.33413085)(3.7,-0.065869140625)
\psbezier[linecolor=black, linewidth=0.026, arrowsize=0.05291667cm 2.0,arrowlength=1.4,arrowinset=0.0]{->}(4.1,0.63413084)(3.4,0.23413086)(5.5,-0.16586915)(4.5,-1.665869140625)
\psline[linecolor=black, linewidth=0.026, arrowsize=0.05291667cm 2.0,arrowlength=1.4,arrowinset=0.0]{->}(3.7,-0.6658691)(3.7,-0.9658691)
\psbezier[linecolor=black, linewidth=0.026, arrowsize=0.05291667cm 2.0,arrowlength=1.4,arrowinset=0.0]{->}(5.4,0.8341309)(5.4,0.93413085)(5.5,1.2341309)(5.9,1.434130859375)
\rput[bl](8.9,-2.865869){$A_{g-1}$}
\psline[linecolor=black, linewidth=0.026, arrowsize=0.05291667cm 2.0,arrowlength=1.4,arrowinset=0.0]{->}(9.2,-2.065869)(9.2,-2.365869)
\rput[bl](8.5,-1.3658692){$S$}
\rput[bl](9.05,-1.9658692){$m$}
\rput[bl](9.05,0.7841309){$\Delta$}
\psline[linecolor=black, linewidth=0.026, arrowsize=0.05291667cm 2.0,arrowlength=1.4,arrowinset=0.0]{->}(9.2,1.4341309)(9.2,1.1341308)
\rput[bl](8.8,1.5341308){$b_{\pi(g-1)}^{-1}$}
\psline[linecolor=black, linewidth=0.026, arrowsize=0.05291667cm 2.0,arrowlength=1.4,arrowinset=0.0]{->}(8.8,-1.4658692)(9.0,-1.6658691)
\rput[bl](8.4,-0.51586914){$b_g^{-1}$}
\psbezier[linecolor=black, linewidth=0.026, arrowsize=0.05291667cm 2.0,arrowlength=1.4,arrowinset=0.0]{->}(9.4,0.63413084)(9.8,0.19663087)(8.9,0.33413085)(8.6,-0.065869140625)
\psbezier[linecolor=black, linewidth=0.026, arrowsize=0.05291667cm 2.0,arrowlength=1.4,arrowinset=0.0]{->}(9.0,0.63413084)(8.3,0.23413086)(10.4,-0.16586915)(9.4,-1.665869140625)
\psline[linecolor=black, linewidth=0.026, arrowsize=0.05291667cm 2.0,arrowlength=1.4,arrowinset=0.0]{->}(8.6,-0.6658691)(8.6,-0.9658691)
\psbezier[linecolor=black, linewidth=0.026, arrowsize=0.05291667cm 2.0,arrowlength=1.4,arrowinset=0.0]{->}(8.6,2.434131)(8.8,2.5341308)(9.2,2.634131)(9.2,2.134130859375)
\rput[bl](6.9,0.33413085){$\dots$}
\end{pspicture}
}
\end{figure}
To write this only using tensors in $H$ (and traces) we flip all the arrows oriented downwards. 
We get the following tensor:

\begin{figure}[H]
\centering
\psscalebox{0.8 0.8} 
{
\begin{pspicture}(0,-4.0912876)(10.645431,4.0912876)
\rput[bl](0.6,1.2212878){$A_1$}
\psline[linecolor=black, linewidth=0.026, arrowsize=0.05291667cm 2.0,arrowlength=1.4,arrowinset=0.0]{->}(0.8,0.72128785)(0.8,1.0212878)
\rput[bl](0.1,-0.17871216){$S$}
\rput[bl](0.65,0.42128783){$m$}
\rput[bl](0.65,-2.3287122){$\Delta$}
\psline[linecolor=black, linewidth=0.026, arrowsize=0.05291667cm 2.0,arrowlength=1.4,arrowinset=0.0]{->}(0.8,-2.7787123)(0.8,-2.478712)
\rput[bl](0.5,-3.3787122){$b_{\pi(1)}^{-1}$}
\psline[linecolor=black, linewidth=0.026, arrowsize=0.05291667cm 2.0,arrowlength=1.4,arrowinset=0.0]{->}(0.4,0.121287845)(0.6,0.32128784)
\rput[bl](0.0,-1.1287122){$b_2^{-1}$}
\psbezier[linecolor=black, linewidth=0.026, arrowsize=0.05291667cm 2.0,arrowlength=1.4,arrowinset=0.0]{->}(1.0,-1.9787122)(1.3,-1.4787122)(0.6,-1.6787121)(0.3,-1.278712158203125)
\psbezier[linecolor=black, linewidth=0.026, arrowsize=0.05291667cm 2.0,arrowlength=1.4,arrowinset=0.0]{->}(0.6,-1.9787122)(-0.4,-1.5787121)(2.0,-1.7787122)(1.0,0.321287841796875)
\psline[linecolor=black, linewidth=0.026, arrowsize=0.05291667cm 2.0,arrowlength=1.4,arrowinset=0.0]{->}(0.2,-0.6787121)(0.2,-0.37871215)
\rput[bl](0.6,2.1212878){$B_1$}
\psbezier[linecolor=black, linewidth=0.026, arrowsize=0.05291667cm 2.0,arrowlength=1.4,arrowinset=0.0](0.8,2.6212878)(0.7,3.6212878)(2.1,3.6212878)(2.1,2.621287841796875)
\psline[linecolor=black, linewidth=0.026, arrowsize=0.05291667cm 2.0,arrowlength=1.4,arrowinset=0.0](2.1,2.6212878)(2.1,-3.2787123)
\psbezier[linecolor=black, linewidth=0.026, arrowsize=0.05291667cm 2.0,arrowlength=1.4,arrowinset=0.0]{->}(2.1,-3.2787123)(2.2,-4.378712)(0.8,-4.1787124)(0.8,-3.478712158203125)
\rput[bl](3.0,1.2212878){$A_2$}
\psline[linecolor=black, linewidth=0.026, arrowsize=0.05291667cm 2.0,arrowlength=1.4,arrowinset=0.0]{->}(3.2,0.72128785)(3.2,1.0212878)
\rput[bl](2.5,-0.17871216){$S$}
\rput[bl](3.05,0.42128783){$m$}
\rput[bl](3.05,-2.3287122){$\Delta$}
\psline[linecolor=black, linewidth=0.026, arrowsize=0.05291667cm 2.0,arrowlength=1.4,arrowinset=0.0]{->}(3.2,-2.7787123)(3.2,-2.478712)
\rput[bl](2.9,-3.3787122){$b_{\pi(2)}^{-1}$}
\psline[linecolor=black, linewidth=0.026, arrowsize=0.05291667cm 2.0,arrowlength=1.4,arrowinset=0.0]{->}(2.8,0.121287845)(3.0,0.32128784)
\rput[bl](2.4,-1.1287122){$b_3^{-1}$}
\psbezier[linecolor=black, linewidth=0.026, arrowsize=0.05291667cm 2.0,arrowlength=1.4,arrowinset=0.0]{->}(3.4,-1.9787122)(3.7,-1.4787122)(3.0,-1.6787121)(2.7,-1.278712158203125)
\psbezier[linecolor=black, linewidth=0.026, arrowsize=0.05291667cm 2.0,arrowlength=1.4,arrowinset=0.0]{->}(3.0,-1.9787122)(2.0,-1.5787121)(4.4,-1.7787122)(3.4,0.321287841796875)
\psline[linecolor=black, linewidth=0.026, arrowsize=0.05291667cm 2.0,arrowlength=1.4,arrowinset=0.0]{->}(2.6,-0.6787121)(2.6,-0.37871215)
\rput[bl](3.0,2.1212878){$B_2$}
\psbezier[linecolor=black, linewidth=0.026, arrowsize=0.05291667cm 2.0,arrowlength=1.4,arrowinset=0.0](3.2,3.3212879)(3.1,4.3212876)(4.5,4.3212876)(4.5,3.321287841796875)
\psline[linecolor=black, linewidth=0.026, arrowsize=0.05291667cm 2.0,arrowlength=1.4,arrowinset=0.0](4.5,3.3212879)(4.5,-3.2787123)
\psbezier[linecolor=black, linewidth=0.026, arrowsize=0.05291667cm 2.0,arrowlength=1.4,arrowinset=0.0]{->}(4.5,-3.2787123)(4.6,-4.378712)(3.2,-4.1787124)(3.2,-3.478712158203125)
\rput[bl](3.05,3.021288){$m$}
\psline[linecolor=black, linewidth=0.026, arrowsize=0.05291667cm 2.0,arrowlength=1.4,arrowinset=0.0]{->}(3.2,2.6212878)(3.2,2.9212878)
\psbezier[linecolor=black, linewidth=0.026, arrowsize=0.05291667cm 2.0,arrowlength=1.4,arrowinset=0.0]{->}(0.8,-1.9787122)(0.8,0.021287842)(2.6,2.521288)(3.0,2.921287841796875)
\rput[bl](9.0,1.2212878){$A_{g-1}$}
\psline[linecolor=black, linewidth=0.026, arrowsize=0.05291667cm 2.0,arrowlength=1.4,arrowinset=0.0]{->}(9.3,0.72128785)(9.3,1.0212878)
\rput[bl](8.6,-0.17871216){$S$}
\rput[bl](9.15,0.42128783){$m$}
\rput[bl](9.15,-2.3287122){$\Delta$}
\psline[linecolor=black, linewidth=0.026, arrowsize=0.05291667cm 2.0,arrowlength=1.4,arrowinset=0.0]{->}(9.3,-2.7787123)(9.3,-2.478712)
\rput[bl](8.9,-3.3787122){$b_{\pi(g-1)}^{-1}$}
\psline[linecolor=black, linewidth=0.026, arrowsize=0.05291667cm 2.0,arrowlength=1.4,arrowinset=0.0]{->}(8.9,0.121287845)(9.1,0.32128784)
\rput[bl](8.5,-1.1287122){$b_g^{-1}$}
\psbezier[linecolor=black, linewidth=0.026, arrowsize=0.05291667cm 2.0,arrowlength=1.4,arrowinset=0.0]{->}(9.5,-1.9787122)(9.8,-1.4787122)(9.1,-1.6787121)(8.8,-1.278712158203125)
\psbezier[linecolor=black, linewidth=0.026, arrowsize=0.05291667cm 2.0,arrowlength=1.4,arrowinset=0.0]{->}(9.1,-1.9787122)(8.1,-1.5787121)(10.5,-1.7787122)(9.5,0.321287841796875)
\psline[linecolor=black, linewidth=0.026, arrowsize=0.05291667cm 2.0,arrowlength=1.4,arrowinset=0.0]{->}(8.7,-0.6787121)(8.7,-0.37871215)
\rput[bl](9.0,2.1212878){$B_{g-1}$}
\psbezier[linecolor=black, linewidth=0.026, arrowsize=0.05291667cm 2.0,arrowlength=1.4,arrowinset=0.0](9.3,3.3212879)(9.2,4.3212876)(10.6,4.3212876)(10.6,3.321287841796875)
\psline[linecolor=black, linewidth=0.026, arrowsize=0.05291667cm 2.0,arrowlength=1.4,arrowinset=0.0](10.6,3.3212879)(10.6,-3.2787123)
\psbezier[linecolor=black, linewidth=0.026, arrowsize=0.05291667cm 2.0,arrowlength=1.4,arrowinset=0.0]{->}(10.6,-3.2787123)(10.7,-4.378712)(9.3,-4.1787124)(9.3,-3.478712158203125)
\rput[bl](9.15,3.021288){$m$}
\psline[linecolor=black, linewidth=0.026, arrowsize=0.05291667cm 2.0,arrowlength=1.4,arrowinset=0.0]{->}(9.3,2.6212878)(9.3,2.9212878)
\psbezier[linecolor=black, linewidth=0.026, arrowsize=0.05291667cm 2.0,arrowlength=1.4,arrowinset=0.0]{->}(8.4,2.2212877)(8.5,2.4212878)(8.7,2.6212878)(9.1,2.921287841796875)
\psbezier[linecolor=black, linewidth=0.026, arrowsize=0.05291667cm 2.0,arrowlength=1.4,arrowinset=0.0]{->}(3.2,-1.9787122)(3.2,-0.30524278)(4.590909,1.7865939)(4.9,2.121287841796875)
\rput[bl](6.3,0.42128783){$\dots$}
\end{pspicture}
}
\end{figure}

Note that there are various traces in the above picture. Using that $A_i$ has the form $h^*\circ b_{\pi(i)}$ by (\ref{eq: twisted R-matrix}), replacing each of the above traces by a cointegral-integral pair using Radford's trace formula (see Subsection \ref{subs: Hopf integrals}) and using that $\la$ is a trace (which follows from $S^2=\id_H$ and $\La$ being 2-sided, see e.g. \cite[Lemma 3.9]{Kup2}), we see that the above tensor is the same as

\begin{figure}[H]
\centering
\psscalebox{0.8 0.8} 
{
\begin{pspicture}(0,-4.82)(10.46,4.82)
\rput[bl](0.5,1.48){$b_{\pi(1)}$}
\psline[linecolor=black, linewidth=0.026, arrowsize=0.05291667cm 2.0,arrowlength=1.4,arrowinset=0.0]{->}(0.8,1.08)(0.8,1.38)
\rput[bl](0.1,0.18){$S$}
\rput[bl](0.65,0.78){$m$}
\rput[bl](0.65,-1.97){$\Delta$}
\psline[linecolor=black, linewidth=0.026, arrowsize=0.05291667cm 2.0,arrowlength=1.4,arrowinset=0.0]{->}(0.8,-2.42)(0.8,-2.12)
\rput[bl](0.5,-3.02){$b_{\pi(1)}^{-1}$}
\psline[linecolor=black, linewidth=0.026, arrowsize=0.05291667cm 2.0,arrowlength=1.4,arrowinset=0.0]{->}(0.4,0.48)(0.6,0.68)
\rput[bl](0.0,-0.77){$b_2^{-1}$}
\psbezier[linecolor=black, linewidth=0.026, arrowsize=0.05291667cm 2.0,arrowlength=1.4,arrowinset=0.0]{->}(1.0,-1.62)(1.3,-1.12)(0.6,-1.32)(0.3,-0.92)
\psbezier[linecolor=black, linewidth=0.026, arrowsize=0.05291667cm 2.0,arrowlength=1.4,arrowinset=0.0]{->}(0.6,-1.62)(-0.4,-1.22)(2.0,-1.42)(1.0,0.68)
\psline[linecolor=black, linewidth=0.026, arrowsize=0.05291667cm 2.0,arrowlength=1.4,arrowinset=0.0]{->}(0.2,-0.32)(0.2,-0.02)
\rput[bl](0.6,2.68){$B_1$}
\psline[linecolor=black, linewidth=0.026, arrowsize=0.05291667cm 2.0,arrowlength=1.4,arrowinset=0.0]{->}(3.2,1.08)(3.2,1.38)
\rput[bl](2.5,0.18){$S$}
\rput[bl](3.05,0.78){$m$}
\rput[bl](3.05,-1.97){$\Delta$}
\psline[linecolor=black, linewidth=0.026, arrowsize=0.05291667cm 2.0,arrowlength=1.4,arrowinset=0.0]{->}(3.2,-2.42)(3.2,-2.12)
\rput[bl](2.9,-3.02){$b_{\pi(2)}^{-1}$}
\psline[linecolor=black, linewidth=0.026, arrowsize=0.05291667cm 2.0,arrowlength=1.4,arrowinset=0.0]{->}(2.8,0.48)(3.0,0.68)
\rput[bl](2.4,-0.77){$b_3^{-1}$}
\psbezier[linecolor=black, linewidth=0.026, arrowsize=0.05291667cm 2.0,arrowlength=1.4,arrowinset=0.0]{->}(3.4,-1.62)(3.7,-1.12)(3.0,-1.32)(2.7,-0.92)
\psbezier[linecolor=black, linewidth=0.026, arrowsize=0.05291667cm 2.0,arrowlength=1.4,arrowinset=0.0]{->}(3.0,-1.62)(2.0,-1.22)(4.4,-1.42)(3.4,0.68)
\psline[linecolor=black, linewidth=0.026, arrowsize=0.05291667cm 2.0,arrowlength=1.4,arrowinset=0.0]{->}(2.6,-0.32)(2.6,-0.02)
\rput[bl](3.0,2.68){$B_2$}
\rput[bl](3.05,3.68){$m$}
\psline[linecolor=black, linewidth=0.026, arrowsize=0.05291667cm 2.0,arrowlength=1.4,arrowinset=0.0]{->}(3.2,3.18)(3.2,3.48)
\psbezier[linecolor=black, linewidth=0.026, arrowsize=0.05291667cm 2.0,arrowlength=1.4,arrowinset=0.0]{->}(0.8,-1.62)(0.8,0.46163264)(2.6,3.0636735)(3.0,3.48)
\psline[linecolor=black, linewidth=0.026, arrowsize=0.05291667cm 2.0,arrowlength=1.4,arrowinset=0.0]{->}(9.3,1.08)(9.3,1.38)
\rput[bl](8.6,0.18){$S$}
\rput[bl](9.15,0.78){$m$}
\rput[bl](9.15,-1.97){$\Delta$}
\psline[linecolor=black, linewidth=0.026, arrowsize=0.05291667cm 2.0,arrowlength=1.4,arrowinset=0.0]{->}(9.3,-2.42)(9.3,-2.12)
\rput[bl](8.9,-3.02){$b_{\pi(g-1)}^{-1}$}
\psline[linecolor=black, linewidth=0.026, arrowsize=0.05291667cm 2.0,arrowlength=1.4,arrowinset=0.0]{->}(8.9,0.48)(9.1,0.68)
\rput[bl](8.5,-0.77){$b_g^{-1}$}
\psbezier[linecolor=black, linewidth=0.026, arrowsize=0.05291667cm 2.0,arrowlength=1.4,arrowinset=0.0]{->}(9.5,-1.62)(9.8,-1.12)(9.1,-1.32)(8.8,-0.92)
\psbezier[linecolor=black, linewidth=0.026, arrowsize=0.05291667cm 2.0,arrowlength=1.4,arrowinset=0.0]{->}(9.1,-1.62)(8.1,-1.22)(10.5,-1.42)(9.5,0.68)
\psline[linecolor=black, linewidth=0.026, arrowsize=0.05291667cm 2.0,arrowlength=1.4,arrowinset=0.0]{->}(8.7,-0.32)(8.7,-0.02)
\rput[bl](9.0,2.68){$B_{g-1}$}
\rput[bl](9.15,3.68){$m$}
\psline[linecolor=black, linewidth=0.026, arrowsize=0.05291667cm 2.0,arrowlength=1.4,arrowinset=0.0]{->}(9.3,3.18)(9.3,3.48)
\psbezier[linecolor=black, linewidth=0.026, arrowsize=0.05291667cm 2.0,arrowlength=1.4,arrowinset=0.0]{->}(8.4,2.58)(8.5,2.837143)(8.7,3.0942857)(9.1,3.48)
\psbezier[linecolor=black, linewidth=0.026, arrowsize=0.05291667cm 2.0,arrowlength=1.4,arrowinset=0.0]{->}(3.2,-1.62)(3.2,0.053469386)(4.590909,2.145306)(4.9,2.48)
\rput[bl](6.4,-0.32){$\dots$}
\rput[bl](1.05,-3.77){$\Delta$}
\psline[linecolor=black, linewidth=0.026, arrowsize=0.05291667cm 2.0,arrowlength=1.4,arrowinset=0.0]{->}(1.0,-3.42)(0.8,-3.22)
\psline[linecolor=black, linewidth=0.026, arrowsize=0.05291667cm 2.0,arrowlength=1.4,arrowinset=0.0]{->}(1.2,-4.32)(1.2,-4.02)
\psline[linecolor=black, linewidth=0.026, arrowsize=0.05291667cm 2.0,arrowlength=1.4,arrowinset=0.0]{->}(0.8,3.18)(1.0,3.38)
\rput[bl](1.05,3.58){$m$}
\psbezier[linecolor=black, linewidth=0.026, arrowsize=0.05291667cm 2.0,arrowlength=1.4,arrowinset=0.0]{->}(1.4,-3.42)(1.8,-2.42)(1.9,-1.32)(1.9,-0.52)
\rput[bl](1.75,-0.32){$S$}
\psbezier[linecolor=black, linewidth=0.026, arrowsize=0.05291667cm 2.0,arrowlength=1.4,arrowinset=0.0]{->}(1.9,0.18)(1.9,1.1090323)(1.7,2.8638709)(1.4,3.38)
\psline[linecolor=black, linewidth=0.026, arrowsize=0.05291667cm 2.0,arrowlength=1.4,arrowinset=0.0]{->}(1.2,3.98)(1.2,4.28)
\rput[bl](1.1,4.48){$\lambda$}
\rput[bl](1.09,-4.82){$\Lambda$}
\psline[linecolor=black, linewidth=0.026, arrowsize=0.05291667cm 2.0,arrowlength=1.4,arrowinset=0.0]{->}(3.2,4.08)(3.2,4.38)
\rput[bl](3.1,4.58){$\lambda$}
\rput[bl](3.45,-3.77){$\Delta$}
\psline[linecolor=black, linewidth=0.026, arrowsize=0.05291667cm 2.0,arrowlength=1.4,arrowinset=0.0]{->}(3.4,-3.42)(3.2,-3.22)
\psline[linecolor=black, linewidth=0.026, arrowsize=0.05291667cm 2.0,arrowlength=1.4,arrowinset=0.0]{->}(3.6,-4.32)(3.6,-4.02)
\psbezier[linecolor=black, linewidth=0.026, arrowsize=0.05291667cm 2.0,arrowlength=1.4,arrowinset=0.0]{->}(3.8,-3.42)(4.2,-2.42)(4.3,-1.32)(4.3,-0.52)
\rput[bl](3.49,-4.82){$\Lambda$}
\rput[bl](4.15,-0.32){$S$}
\psbezier[linecolor=black, linewidth=0.026, arrowsize=0.05291667cm 2.0,arrowlength=1.4,arrowinset=0.0]{->}(4.3,0.18)(4.3,1.08)(3.94,2.98)(3.4,3.48)
\psline[linecolor=black, linewidth=0.026, arrowsize=0.05291667cm 2.0,arrowlength=1.4,arrowinset=0.0]{->}(9.3,4.08)(9.3,4.38)
\rput[bl](9.2,4.58){$\lambda$}
\psbezier[linecolor=black, linewidth=0.026, arrowsize=0.05291667cm 2.0,arrowlength=1.4,arrowinset=0.0]{->}(10.4,0.18)(10.4,1.1380645)(10.04,2.947742)(9.5,3.48)
\psbezier[linecolor=black, linewidth=0.026, arrowsize=0.05291667cm 2.0,arrowlength=1.4,arrowinset=0.0]{->}(9.9,-3.42)(10.3,-2.42)(10.4,-1.32)(10.4,-0.52)
\rput[bl](10.25,-0.32){$S$}
\rput[bl](9.55,-3.77){$\Delta$}
\psline[linecolor=black, linewidth=0.026, arrowsize=0.05291667cm 2.0,arrowlength=1.4,arrowinset=0.0]{->}(9.5,-3.42)(9.3,-3.22)
\psline[linecolor=black, linewidth=0.026, arrowsize=0.05291667cm 2.0,arrowlength=1.4,arrowinset=0.0]{->}(9.7,-4.32)(9.7,-4.02)
\rput[bl](9.59,-4.82){$\Lambda$}
\rput[bl](2.9,1.48){$b_{\pi(2)}$}
\rput[bl](8.8,1.48){$b_{\pi(g-1)}$}
\psline[linecolor=black, linewidth=0.026, arrowsize=0.05291667cm 2.0,arrowlength=1.4,arrowinset=0.0]{->}(0.8,1.88)(0.8,2.18)
\psline[linecolor=black, linewidth=0.026, arrowsize=0.05291667cm 2.0,arrowlength=1.4,arrowinset=0.0]{->}(3.2,1.88)(3.2,2.18)
\psline[linecolor=black, linewidth=0.026, arrowsize=0.05291667cm 2.0,arrowlength=1.4,arrowinset=0.0]{->}(9.3,1.88)(9.3,2.18)
\end{pspicture}
}
\end{figure}

\noindent Note that if $H$ is a Hopf algebra in $\SVect$, then the above expression has to be multiplied by $\s=(-1)^{|\La|(g-1)}.$
\medskip

We claim that the above tensor is $\s \cdot Z_H^{\rho}(\S,\aa,\bb^e)$ where $(\S,\aa,\bb^e)$ is the (extended) Heegaard diagram of Subsection \ref{subs: HDs}, with the orientations and basepoints specified there. To see this, first note that each $A_i$-output is joined to some input of $B_{\pi(i)}$ and the product appearing right below $A_i$ is simply adding a product to the corresponding input in $B_{\pi(i)}$. Since $B_{\pi(i)}$ is itself a product, this implies that the above expression is in the form coproduct, followed by antipodes, followed by products. Thus, it has the same form as $Z_H^{\rho}(\S,\aa,\bb^e)$ and now we have to check that each coproduct/product has the same outputs/inputs as intersection points of the corresponding $\a$ or $\b$ curve and that the $b$'s twisting the tensors come from the Fox derivatives as in (\ref{eq: Fox of Wirtinger}). Indeed, for $i=1,\dots,g-2$ the $i$-th tensor on the bottom of the above tensor is
\begin{figure}[H]
\psscalebox{1.0 1.0} 
{
\begin{pspicture}(0,-2.262859)(4.2548466,2.262859)
\rput[bl](2.7291284,1.5371408){$S$}
\rput[bl](1.4291284,0.3871408){$(b_{\pi(i)}b_{i+1}b_{\pi(i)}^{-1})^{-1}$}
\rput[bl](1.4791284,-1.2128592){$\Delta$}
\psline[linecolor=black, linewidth=0.026, arrowsize=0.05291667cm 2.0,arrowlength=1.4,arrowinset=0.0]{->}(1.7291284,-0.6628592)(2.1291285,0.13714081)
\psline[linecolor=black, linewidth=0.026, arrowsize=0.05291667cm 2.0,arrowlength=1.4,arrowinset=0.0]{->}(1.6291285,-1.7628592)(1.6291285,-1.4628592)
\rput[bl](2.9791284,-0.3628592){$S$}
\rput[bl](1.5191284,-2.262859){$\Lambda$}
\psline[linecolor=black, linewidth=0.026, arrowsize=0.05291667cm 2.0,arrowlength=1.4,arrowinset=0.0]{->}(1.9291284,-0.8628592)(2.7291284,-0.46285918)
\psline[linecolor=black, linewidth=0.026, arrowsize=0.05291667cm 2.0,arrowlength=1.4,arrowinset=0.0]{->}(3.5291283,-0.06285919)(4.3291283,0.3371408)
\psline[linecolor=black, linewidth=0.026, arrowsize=0.05291667cm 2.0,arrowlength=1.4,arrowinset=0.0]{->}(2.5291283,0.9371408)(2.7291284,1.3371408)
\psline[linecolor=black, linewidth=0.026, arrowsize=0.05291667cm 2.0,arrowlength=1.4,arrowinset=0.0]{->}(1.5291284,-0.6628592)(1.1291285,0.13714081)
\psline[linecolor=black, linewidth=0.026, arrowsize=0.05291667cm 2.0,arrowlength=1.4,arrowinset=0.0]{->}(1.3291284,-0.8628592)(-0.070871584,-0.06285919)
\rput[bl](0.52912843,0.3371408){$b_{\pi(i)}^{-1}$}
\psline[linecolor=black, linewidth=0.026, arrowsize=0.05291667cm 2.0,arrowlength=1.4,arrowinset=0.0]{->}(0.7291284,0.9371408)(0.3291284,1.7371408)
\psline[linecolor=black, linewidth=0.026, arrowsize=0.05291667cm 2.0,arrowlength=1.4,arrowinset=0.0]{->}(2.9291284,1.9371408)(3.1291285,2.3371408)
\end{pspicture}
}
\end{figure}

\noindent which is exactly the tensor corresponding to $\a_i$ as in Subsection \ref{subs: Tensors from HD's} where the $w_i$ are as in (\ref{eq: wx's positive case}). The two rightmost legs have an antipode, because the first two crossings of $\a_i$ are negative (assuming that all crossings of the diagram are positive, as we did above). Note that we show four legs coming out of the above tensor, but there may be less if those legs correspond to intersection points of $\a_i\cap \b_g$, this is because we applied $\e$ to the output leg corresponding to $\b_g$. The $(g-1)$-th coproduct above is only a triple coproduct (or less), this is because $\a_{g-1}$ intersects the arc $\b_g$. Similarly, it is easy to see that the $i$-th product on the top of the above tensor corresponds to the curve $\b_i$. Note that the tensor for $\b_1$ is slightly different, this is because there is not an $\a$ enclosing the top underarc of the diagram. This shows our claim under the assumption that all crossings of the diagram are positive. Whenever the crossing at $a_i$ is negative, $a_i$ has a black bead $A_i=S(h^i)$ followed by a white bead $\v_{b_{\pi(i)}^{-1}}$. This follows from formula (\ref{eq: twisted inverse R-matrix}) and by sliding the white bead through the white bead. Then, after proceeding as above and using that $S$ is an algebra anti-automorphism, the $i$-th coproduct in the last tensor would instead be

\begin{figure}[H]
\begin{pspicture}(0,-1.7628592)(5.6632714,1.7628592)
\rput[bl](3.0375538,0.8871408){$(b_{\pi(i)}^{-1}b_{i+1})^{-1}$}
\rput[bl](2.8875537,-0.7128592){$\Delta$}
\psline[linecolor=black, linewidth=0.026, arrowsize=0.05291667cm 2.0,arrowlength=1.4,arrowinset=0.0]{->}(3.1375537,-0.16285919)(3.5375538,0.6371408)
\psline[linecolor=black, linewidth=0.026, arrowsize=0.05291667cm 2.0,arrowlength=1.4,arrowinset=0.0]{->}(3.0375538,-1.2628592)(3.0375538,-0.9628592)
\rput[bl](4.3875537,0.13714081){$S$}
\rput[bl](2.9275537,-1.7628592){$\Lambda$}
\psline[linecolor=black, linewidth=0.026, arrowsize=0.05291667cm 2.0,arrowlength=1.4,arrowinset=0.0]{->}(3.3375537,-0.3628592)(4.1375537,0.03714081)
\psline[linecolor=black, linewidth=0.026, arrowsize=0.05291667cm 2.0,arrowlength=1.4,arrowinset=0.0]{->}(4.937554,0.43714082)(5.7375536,0.8371408)
\psline[linecolor=black, linewidth=0.026, arrowsize=0.05291667cm 2.0,arrowlength=1.4,arrowinset=0.0]{->}(3.9375536,1.4371408)(4.1375537,1.8371408)
\psline[linecolor=black, linewidth=0.026, arrowsize=0.05291667cm 2.0,arrowlength=1.4,arrowinset=0.0]{->}(2.9375536,-0.16285919)(2.5375538,0.6371408)
\psline[linecolor=black, linewidth=0.026, arrowsize=0.05291667cm 2.0,arrowlength=1.4,arrowinset=0.0]{->}(2.7375536,-0.3628592)(2.2375536,0.03714081)
\rput[bl](2.0375538,0.73714083){$b_{\pi(i)}$}
\psline[linecolor=black, linewidth=0.026, arrowsize=0.05291667cm 2.0,arrowlength=1.4,arrowinset=0.0]{->}(2.2375536,1.2371408)(1.9375538,1.8371408)
\rput[bl](1.5375537,0.13714081){$b_{\pi(i)}$}
\psline[linecolor=black, linewidth=0.026, arrowsize=0.05291667cm 2.0,arrowlength=1.4,arrowinset=0.0]{->}(1.3375537,0.5371408)(0.9375537,0.8371408)
\rput[bl](0.5375537,0.8371408){$S$}
\psline[linecolor=black, linewidth=0.026, arrowsize=0.05291667cm 2.0,arrowlength=1.4,arrowinset=0.0]{->}(0.3375537,1.1371408)(-0.06244629,1.4371408)
\end{pspicture}
\end{figure}
\noindent which is again the tensor defined in Subsection \ref{subs: Tensors from HD's} by (\ref{eq: wx's negative case}). Therefore, we have shown that 
\begin{equation*}
\e_{D(H)}(\rt_{\uDH}^{\rho}(T))=\e_{D(H)}(z'_D)=\rH(\rho(m))^{-r/2}(-1)^{|\La|(g-1)}Z_H^{\rho}(\S,\aa,\bb^e)
\end{equation*}
so that
\begin{equation*}
P_H^{\rho}(K)=\rH(\rho(m))^{\frac{w(T)-r}{2}}(-1)^{|\La|(g-1)}Z_H^{\rho}(\S,\aa,\bb^e)\dot{=}I_H^{\rho}(M,\c)
\end{equation*}
where $\dot{=}$ means equality up to the above indeterminacy (since $w(T)-r\in 2\Z$).
\medskip

\end{proof}


Note that if $\rho:\pi_1(S^3\sm K)\to \Ker(\rH)$, then
\begin{equation}
\label{eq: thm no indeterminacy}
P_H^{\rho}(K)=(\pm 1)^{|\La|} I_H^{\rho}(M,\c).
\end{equation}
If $H$ is $\Z$-graded and we consider $\rhoc$ instead, then there is still a $(\pm 1)^{|\La|} t^{k|\La|}$ indeterminacy.
\medskip

Now let $\rho:\pi_1(S^3\sm K)\to SL(n,\C)$ be a homomorphism. If $H=\La(\C^n)$ is an exterior algebra, then $\Aut(H)=GL(n,\C)$, $\rH$ is the determinant and $\Ker(\rH)=SL(n,\C)$.

\begin{corollary}
\label{cor: twisted Alex is twisted RT}
The $SL(n,\C)$-twisted Reidemeister torsion of the complement of $K$ is recovered as a Reshetikhin-Turaev invariant from a twisted Drinfeld double of an exterior algebra $\La(\C^n)$ by
\begin{align*}
P_{\La(\C^n)}^{\rho}(K)=(\pm 1)^n \tau^{\rho}(S^3\sm K,m).
\end{align*}
The twisted Alexander polynomial $\De^{\rho}_K(t)$ of $K$ is obtained as follows:
\begin{align*}
P_{\La(\C^n)}^{\rho}(K,t)\dot{=}\tau^{\rhoc}(S^3\sm K,m)\dot{=}\det(t\rho(m)-I_n)\frac{\De^{\rho}_{K}(t)}{\De^{\rho}_{K,0}(t)}
\end{align*}
where $\dot{=}$ is equality of to multiplication by $(\pm 1)^n t^{kn}, k\in\Z$ and $\De^{\rho}_{K,0}(t)$ is the $0$-th twisted Alexander polynomial of $K$.
\end{corollary}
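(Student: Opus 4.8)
The plan is to deduce the corollary directly from Theorem \ref{Thm: main theorem} together with the identification of $I_{\La(\C^n)}^{\rho}$ with twisted Reidemeister torsion from \cite{LN:twisted}, and then to invoke the classical expression of the twisted Reidemeister torsion of a knot exterior in terms of twisted Alexander polynomials. First I would record the relevant data for $H=\La(\C^n)$: this is an involutory (Hopf super)algebra with a two-sided integral and cointegral, so by Subsection \ref{subs: Hopf ribbon element KR thm} we may take $b=1,\b=\e$, with cointegral $\La_l$ the top exterior form. One has $\Aut(H)=GL(n,\C)$ acting on $\C^n=H_{(1)}$ in the standard way, and the character $\rH$ of (\ref{eq: rH}) is the determinant (an automorphism scales the top form by its determinant), so $\Ker(\rH)=SL(n,\C)$. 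Taking $G=SL(n,\C)$, the restriction $\rH|_G$ is trivial and hence admits the trivial square root, and $G$ fixes $b=1,\b=\e$; thus by Proposition \ref{prop: ribbon of TDD} the $G$-twisted double $\underline{D(\La(\C^n))}|_G$ is ribbon and $\rt_{\underline{D(\La(\C^n))}}^{\rho}(T)$ is defined for every $\rho:\pi_1(S^3\sm K)\to SL(n,\C)$. Since $\rho$ lands in $\Ker(\rH)$, formula (\ref{eq: thm no indeterminacy}) of Theorem \ref{Thm: main theorem} applies and gives $\e_{D(\La(\C^n))}(\rt_{\underline{D(\La(\C^n))}}^{\rho}(T))=\pm I_{\La(\C^n)}^{\rho}(M,\c)$, with the ambiguity being only a sign because $\pm\rH(\rho(g))^{k/2}=\pm 1$.

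Next I would invoke \cite{LN:twisted}, where it is shown that for an exterior algebra the Fox-calculus twisted Kuperberg invariant $I_H^{\rho}(M,\c)$ of the sutured knot complement equals, up to the sign inherent in Reidemeister torsion, the $SL(n,\C)$-twisted torsion $\tau^{\rho}(S^3\sm K,m)$ relative to a meridian $m$. Here I would keep track of the dual conventions of Remark \ref{remark: conventions on IH}: the present $I_H^{\rho}$ is the $I_{H^*}^{\rho^{-t}}$ of \cite{LN:twisted}, but $(\La(\C^n))^*\cong\La((\C^n)^*)$ is again an exterior algebra, and the Milnor duality theorem for Reidemeister torsion of a compact $3$-manifold with torus boundary identifies $\tau^{\rho^{-t}}(S^3\sm K,m)$ with $\tau^{\rho}(S^3\sm K,m)$ up to units (the meridian being self-dual under the duality pairing). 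Combining, one obtains the first assertion, $\e_{D(\La(\C^n))}(\rt_{\underline{D(\La(\C^n))}}^{\rho}(T))=\pm\tau^{\rho}(S^3\sm K,m)$.

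For the $\Z$-graded statement I would grade $\La(\C^n)$ by exterior degree, so $|\La_l|=n$. The $\Z$-graded part of Theorem \ref{Thm: main theorem} then yields $\e_{D(\La(\C^n))}(\rt_{\underline{D(\La(\C^n))}}^{\rhoc}(T))\dot{=}I_{\La(\C^n)}^{\rhoc}(M,\c)$ up to $\pm t^{kn/2}$, and the graded version of the identification of \cite{LN:twisted} gives $I_{\La(\C^n)}^{\rhoc}(M,\c)\dot{=}\tau^{\rhoc}(S^3\sm K,m)$, the variable $t$ recording the abelianization $h:\pi_1\to\Z$. The final equality $\tau^{\rhoc}(S^3\sm K,m)\dot{=}\det(t\rho(m)-I_n)\,\De^{\rho}_K(t)/\De^{\rho}_{K,0}(t)$ is then the classical computation of the twisted Reidemeister torsion of a knot exterior via Wada's invariant: using the CW-structure of $S^3\sm K$ coming from a Wirtinger presentation, the torsion relative to the meridian equals the ratio $\De^{\rho}_{K,1}(t)/\De^{\rho}_{K,0}(t)$ of the first and zeroth twisted Alexander polynomials, and normalizing by the chosen meridian $m$ contributes exactly the factor $\det(t\rho(m)-I_n)$; see \cite{KL:twisted, FV:survey}.

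The argument is essentially bookkeeping, so there is no single hard step; the points demanding care are matching the conventions of the two papers (the dualization $H\rightsquigarrow H^{*}$, $\rho\rightsquigarrow\rho^{-t}$ of Remark \ref{remark: conventions on IH} and the accompanying torsion duality), the precise value and parity of $|\La_l|=n$ which governs the $t^{kn/2}$-indeterminacy, and above all pinning down the normalization in the last equality --- which twisted Alexander polynomial occurs, how Wada's invariant relates to sign-refined torsion, and how the meridian $m$ produces the $\det(t\rho(m)-I_n)$ factor.
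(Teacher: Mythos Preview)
Your proposal is correct and follows essentially the same route as the paper: apply (\ref{eq: thm no indeterminacy}) for $\rho$ landing in $\Ker(\rH)=SL(n,\C)$, then invoke \cite{LN:twisted} to identify $I_{\La(\C^n)}^{\rho}$ with $\tau^{\rho}$, and for the graded statement use the $\Z$-graded version of Theorem \ref{Thm: main theorem} together with the standard expression of twisted torsion of a knot exterior via twisted Alexander polynomials. Your exposition is more detailed than the paper's (spelling out $\Aut(\La(\C^n))=GL(n,\C)$, $\rH=\det$, $|\La_l|=n$, and why Proposition \ref{prop: ribbon of TDD} applies), but the logical skeleton is identical. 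One minor remark: you handle the convention-matching of Remark \ref{remark: conventions on IH} by appealing to Milnor duality for torsion, whereas the paper simply observes that under the dualization $H\mapsto H^*$, $\rho\mapsto\rho^{-t}$ the identification in \cite{LN:twisted} already gives $I_{\La(\C^n)}^{\rho}=\tau^{\rho}$ directly; your detour through duality is not wrong, but it is not needed.
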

\begin{proof}
The first assertion follows from (\ref{eq: thm no indeterminacy}) together with \cite[Theorem 2]{LN:twisted}. Note that, as explain in Remark \ref{remark: conventions on IH}, our $I_H^{\rho}$ is the $I_H^{\rho^{-t}}$ of \cite{LN:twisted} so with our conventions we get $I_{\La(\C^n)}^{\rho}=\tau^{\rho}$. 
The second assertion follows from our theorem at $\rhoc$ and standard theorems of Reidemeister torsion, see \cite{LN:twisted}.
\end{proof}


\bibliographystyle{amsplain}
\bibliography{/Users/daniel/Desktop/Daniel/TEX/bib/referencesabr}

\end{document}